\documentclass[11pt,a4paper]{article}
\usepackage[utf8]{inputenc}
\usepackage{amsmath}
\usepackage{bm}
\usepackage{amsfonts}
\usepackage{titlesec}
\usepackage[capposition=top]{floatrow}
\usepackage[margin=1in]{geometry}
\usepackage{changepage}
\usepackage{amssymb}
\usepackage{relsize}
\usepackage{xcolor}
\usepackage{marginnote}
\usepackage{expl3}
\usepackage{mathtools}
\usepackage{amsthm}
\usepackage{bbm}
\usepackage{shadethm}
\usepackage{graphicx}
\usepackage{setspace}
\usepackage{braket}
\usepackage{epstopdf}
\usepackage{titlesec}
\usepackage{pdfpages} 
\hyphenation{bound-ed}
\let\ppprod\prod
\let\sssum\sum
\let\inttt\int
\let\bigcuppp\bigcup
\let\bigsqcuppp\bigsqcup
\usepackage{OldStandard} 
\renewcommand{\epsilon}{\varepsilon}

\renewcommand{\bigcup}{\mathlarger{\bigcuppp}}
\renewcommand{\bigsqcup}{\mathlarger{\bigsqcuppp}}
\renewcommand{\int}{\mathlarger{\inttt}} 
\renewcommand{\prod}{\mathlarger{\ppprod}} 
\renewcommand{\sum}{\mathlarger{\sssum}} 

\let\originalleft\left 
\let\originalright\right
\renewcommand{\left}{\mathopen{}\mathclose\bgroup\originalleft}
\renewcommand{\right}{\aftergroup\egroup\originalright}

\newcommand{\dd}{\mathrm{d}}

\newcommand{\lb}{\left[}
\newcommand{\rb}{\right]}
\newcommand{\lp}{\left(}
\newcommand{\rp}{\right)}
\newcommand{\lbr}{\left\lbrace}
\newcommand{\rbr}{\right\rbrace}

\newcommand{\hsp}{\hspace{0.1cm}}

\newcommand{\R}{\mathbb{R}}
\newcommand{\Q}{\mathbb{Q}}
\newcommand{\Z}{\mathbb{Z}}
\newcommand{\C}{\mathbb{C}}

\newcommand{\ringint}{\mathcal{O}}
\newcommand{\fracp}{\mathfrak{p}}

\newcommand{\norm}{\mathbb{N}}
\newcommand{\K}{\mathbb{K}}

\newcommand{\family}{\mathcal{F}}

\newcommand{\im}{\mathrm{Im}}
\newcommand{\re}{\mathrm{Re}}
\numberwithin{equation}{section}
\titleformat
{\section} 
[display] 
{\scshape\large} 
{} 
{0.5ex} 
{\thesection. \hsp \hsp \centering} 
[] 
\titleformat
{\subsection} 
[display] 
{\scshape} 
{} 
{0.5ex} 
{\thesubsection. \hsp} 
[] 
\titleformat
{\subsubsection} 
[display] 
{\scshape} 
{} 
{0.5ex} 
{\thesubsubsection. \hsp} 
[] 
\newtheoremstyle{sltheorem}
{1.2em}                
{1.2em}                
{\slshape}        
{}                
{\scshape}       
{.}               
{.5em}               
{}                
\theoremstyle{sltheorem}

\newtheorem{theorem}{Theorem}

\newtheorem{prop}[theorem]{Proposition}
\newtheorem{lemma}[theorem]{Lemma}

\numberwithin{theorem}{section}


\setlength{\shadeboxsep}{2pt}  
\setlength{\shadeboxrule}{.4pt}  
\setlength{\shadedtextwidth}{\textwidth}
\addtolength{\shadedtextwidth}{-2\shadeboxsep}
\addtolength{\shadedtextwidth}{-2\shadeboxrule}
\setlength{\shadeleftshift}{0pt} 
\setlength{\shaderightshift}{0pt}
\definecolor{shadethmcolor}{cmyk}{0,0,0,0}
\definecolor{shaderulecolor}{cmyk}{0,0,0,1} 

\setstretch{1.05} 

\allowdisplaybreaks

\makeatletter
\renewcommand\tableofcontents{%
    \@starttoc{toc}%
}
\makeatother
\begin{document}
\thispagestyle{empty}
\begin{center}
\noindent
\textbf{THE $1$-LEVEL DENSITY FOR ZEROS OF HECKE \\ L-FUNCTIONS OF IMAGINARY QUADRATIC \\ NUMBER FIELDS OF CLASS NUMBER $1$}\\ \vspace{0.3cm}
\textsc{Kristian Holm}\footnote{\textit{email:} {\tt holm@math.uni-kiel.de}} \quad \quad \today
\end{center} 
\begin{adjustwidth}{2em}{2em}
\begin{small}
\textbf{Abstract.} Let $\mathbb{K} = \Q ( \sqrt{-d} )$ be an imaginary quadratic number field of class number $1$ and $\ringint_\K$ its ring of integers. We study a family of Hecke $L$-functions associated to angular characters on the non-zero ideals of $\ringint_\K$. Using the powerful Ratios Conjecture (RC) due to Conrey, Farmer, and Zirnbauer, we compute a conditional asymptotic for the average $1$-level density of the zeros of this family, including terms of lower order than the main term in the Katz--Sarnak Density Conjecture coming from random matrix theory. We also prove an unconditional result about the $1$-level density, which agrees with the RC prediction when our test functions have Fourier transforms with support in $(-1,1)$. \\
\end{small}
\end{adjustwidth}
\begin{footnotesize}
    \textit{Keywords:} $1$-level density, Hecke $L$-functions, the Ratios Conjecture, imaginary quadratic number fields \\
    \textit{Mathematics Subject Classification:} 11R42, 11M50 (Primary), 11R11, 11M41 (Secondary)
\end{footnotesize}
\section{Introduction}
Many problems in modern number theory await progress due to the difficulty of obtaining exact information about zeros of $L$-functions. Perhaps not unrelated to this difficulty, the study of the large scale statistics of such zeros has also become a topic of much interest, the underlying philosophy being that a collection of objects is often more regular and well-behaved than the objects themselves. This line of research began with the work of Montgomery in the 1970's who famously conjectured \cite{montgomery} that the \textit{pair correlation} of normalized zeros of the Riemann zeta function, quantifying the "probability" of $\zeta$ having two zeros within some prescribed distance of each other, is the same as the pair correlation for eigenvalues of random Hermitian matrices. Today, there is a large body of conjectures describing the links between $L$-functions and random matrices, or formulating properties that should hold for $L$-functions by analogy with random matrices. In this paper, we will focus on two of these: The Katz--Sarnak Density Conjecture and the $L$-functions Ratios Conjecture. \par 
The Katz--Sarnak conjecture is a statement about another statistic of the zeros of (a family of) $L$-functions, namely the \textit{$1$-level density}, which was first studied by Özlük and Snyder in \cite{osnyder}. For our purposes, it can be defined as follows: If $\mathcal{F} = \lbr L_k : k\geqslant 1 \rbr$ is a family of $L$-functions indexed by some parameter $k$, and $\mathcal{F}(K) = \lbr L_k : 1 \leqslant k \leqslant K \rbr$, let
\begin{align*}
Z_k := \lbr z : 0 \leqslant \re(z) \leqslant 1, \,  L_k(z) = 0 \rbr
\end{align*}
be the set of zeros of $L_k$ in the critical strip. If $\rho \in Z_k$, let $\gamma (\rho) = -i \lp \rho - 1/2 \rp$. Thus, under the Riemann Hypothesis for the family $\mathcal{F}(K)$, $\gamma (\rho)$ is the imaginary part of the zero $\rho$ of $L_k$. Furthermore, let $f : \R \rightarrow \R$ be an even Schwarz function with the property that its Fourier transform $\widehat{f}$ has compact support. Then the \textit{$1$-level density} of the zeros of the family $\mathcal{F}(K)$ is the number
\begin{align*}
D \lp \mathcal{F}(K); f \rp := \frac{1}{K} \sum_{k = 1}^K \, \sum_{\rho \in Z_k} f \lp \frac{\gamma ( \rho) \log K}{\pi} \rp.
\end{align*}
Here, the scaling of $\gamma ( \rho)$ ensures that the average spacing between the zeros is approximately $1$. \par 
Under the Grand Riemann Hypothesis, the $1$-level density measures the average density of the normalized zeros of the family $\mathcal{F}(K)$ \textit{in a weak sense}. That is, for zeros on the critical line $\re (z) = 1/2$, and especially such zeros close to the real line, $D( \mathcal{F}(K), \cdot )$ is a functional that sees their (scaled) distribution through the lens of a suitable test function. When taking more and more $L$-functions of the family and their zeros into account, one thus obtains a sequence of distributions. For the purposes of studying this sequence, and in particular its limit, such weak characterizations in fact give a complete picture of the distribution of the zeros of the family. (At least, this holds if one knows the weak characterizations of the distribution for a sufficiently large class of test functions, thanks to classic results in probability theory such as the Portmanteau theorem.) \par 
Regarding the limiting distribution of the zeros of a family of $L$-functions, Katz and Sarnak conjectured \cite{ks1, ks2} that when $K$ tends to infinity, the functional $D ( \mathcal{F}(K), \cdot )$ converges weakly to some integral kernel that arises in the dimensional limit of the $1$-level density of eigenangles of random unitary matrices, chosen uniformly at random with respect to Haar measure either from the full unitary group $\mathrm{U}(N)$ or from one of the subgroups $\mathrm{USp}(N)$ (when $N$ is even), $\mathrm{O}(N)$, or $\mathrm{SO}(N)$. Specifically, if the family $\mathcal{F}$ has so-called \textit{unitary symplectic symmetry type}, the \textit{Katz--Sarnak Density Conjecture} states that
\begin{align}\label{katzsarnak}
\lim_{K \rightarrow \infty} D \lp \mathcal{F}(K); f \rp = \int_{\R} f(x) \lp 1 - \frac{\sin(2 \pi x)}{2 \pi x} \rp \hsp \dd x
\end{align}
for any even Schwarz function $f$ whose Fourier transform has compact support. We emphasize that if the family $\mathcal{F}(K)$ has a different symmetry type, the integral kernel which is conjectured to appear in the limit has a different form. \par 
Next, the very powerful $L$-functions Ratios Conjecture due to Conrey, Farmer, and Zirnbauer \cite{confarzir} asserts that averages of quotients of $L$-functions evaluated at certain parameters satisfy asymptotics that parallel those of quotients of characteristic polynomials of matrices. (See \textsc{Section 3} for a more detailed statement.) Many authors have used the Ratios Conjecture to study statistical aspects of the zeros of $L$-functions, or various other aspects of such functions. For example, Conrey and Snaith \cite{consnaith} studied the pair correlation of the zeros of the Riemann zeta function. In the same paper, they also studied the $1$-level density for zeros of quadratic Dirichlet $L$-functions. Later, the Ratios Conjecture was used to study zeros of $L$-functions of a more general class of characters, namely \textit{Hecke characters} of a number field. The Hecke $L$-functions considered in \cite{waxman} are those associated to angular characters of the Gaussian integers, and Waxman here used the Ratios Conjecture to compute the $1$-level density and identify lower-order terms (compared with the Katz--Sarnak heuristic) in this asymptotic. The goal of this paper is to follow \cite{waxman} and do such a study for a general imaginary quadratic number field of class number $1$. Thus, we consider a family $\mathcal{F}(K)$ of $L$-functions associated to angular Hecke characters of such fields, which we will describe now. \par 
By the Baker--Heegner--Stark Theorem, a complete list of imaginary quadratic number fields with class number $1$ is given by $\K := \K_d := \Q (\sqrt{-d})$, where $d$ is one of the \textit{Heegner numbers},
\begin{align*}
d = 1, 2, 3, 7, 11, 19, 43, 67, 163.
\end{align*}
Since the case $d = 1$ has already been treated in \cite{waxman}, we will let $d$ denote any of the eight remaining numbers on the list above. (This restriction will also make certain computations simpler, as the arguments involve several functions defined conditionally on the value of $d$.) Moreover, we will also let $N\geqslant 1$ denote any fixed positive multiple of $\left| \ringint_\K^\times \right|$, where $\left| \ringint_\K^\times \right| < \infty$ is the order of the group of units in the ring $\ringint_\K$. Our family of $L$-functions is then given by $\mathcal{F} = \lbr L_k(s) : k \geqslant 1 \rbr$, where
\begin{align*}
L_k(s) := \sum_{I \subset \ringint_\K \atop I \neq 0} \frac{\psi_k(I)}{\norm (I)^s}, \quad \quad \psi_k(\langle \alpha \rangle) = \lp \frac{\alpha}{\overline{\alpha}} \rp^{Nk},
\end{align*}
when $\re(s) > 1$. \par 
We note that such $L$-functions have been studied for arithmetic purposes on several occasions in the past. To give a few examples, Harman and Lewis considered the functions $L(s, \Xi_k)$, $k \geqslant 1$, with the Hecke character $\Xi_k$ given by $\Xi_k (\alpha ) = \lp \alpha / \, \overline{\alpha} \rp^{2k}$ for $\alpha \in \Z \lb i \rb$, and proved \cite[Thm. 1]{harmanlewis} the existence of infinitely many rational primes $p$ that have a Gaussian prime factor with a small argument (depending on the size of $p$). Later, in \cite{rudwax} Rudnick and Waxman considered the same family of $L$-functions and counted Gaussian primes in more general sectors of the complex plane, in a sense quantifying Hecke's classical theorem about the equidistribution of the angles of Gaussian primes on the circle (\cite{heckeangles1}, \cite{heckeangles2}). In particular, the authors in \cite{rudwax} studied the variance of such smooth counts of Gaussian primes and conjectured an asymptotic (\cite[Conjecture 1.2]{rudwax}) for this statistic based on a random matrix model and an analogue with similar counts over function fields. The asymptotic behaviour of this variance was, in fact, investigated quite recently from a different point of view in \cite{waxmanetalnew}, with a particular point of interest being the nature of the lower order terms in the asymptotic. An important aspect of the work in \cite{waxmanetalnew} relied on a study of the Hecke $L$-functions $L(s, \Xi_k)$ discussed above in combination with the Ratios Conjecture.\par 
We now state our main results. In the formulations of these (and throughout the paper), $D$ denotes the discriminant of our number field $\K$, and $\chi (n) = (-d/n)$ denotes the Dirichlet character coming from the Kronecker symbol (see \textsc{Section} 2.3). Moreover, $\gamma$ denotes the Euler--Mascheroni constant.
\begin{theorem}\label{THMunconditional}
Suppose that $f : \R \rightarrow \R$ is an even Schwarz function with $\mathrm{supp} \, \widehat{f} \subset \lp -1, 1 \rp$. Then we have
\begin{align*}
D \lp \mathcal{F}(K); f \rp = \int_{\R} f(x) \lp 1 - \frac{\sin(2 \pi x)}{2 \pi x} \rp \hsp \dd x + \frac{\ell_0 \hat{f}(0)}{\log K} + O \lp \frac{1}{(\log K)^{2}} \rp,
\end{align*}
where
\begin{align*}
\ell_0 &= - \int_1^\infty t^{-2} \lp -t + \sum_{n \leqslant t} \Lambda (n) \rp \hsp \dd t - \frac{L'(1, \chi)}{L(1, \chi)} - 2 \sum_{p \geqslant 3 \atop (-d/p) = -1} \frac{\log p}{p^2 - 1} \\ &\quad \quad + \log \sqrt{|D|} - \log 2 \pi + \log N - 2 - \frac{\sqrt{d} \log d}{d-1}  
- \frac{2\log 2}{3} \cdot \mathbbm{1}(d \neq 2,7),
\end{align*}
and $\Lambda (n)$ denotes the von Mangoldt function.
\end{theorem}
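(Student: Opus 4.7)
The plan is to apply the Riemann--Weil explicit formula to each $L_k$ and then average over $1 \leqslant k \leqslant K$. The functional equation of $L_k$ has gamma factor $\Gamma_\C(s + Nk)$ and conductor $|D|$, so the analytic conductor is $C_k \asymp |D|(Nk)^2/(2\pi)^2$. Writing the explicit formula with the test function $t \mapsto f(t \log K/\pi)$, the quantity $\sum_\rho f(\gamma_\rho \log K/\pi)$ equals a main conductor/gamma term proportional to $\hat{f}(0) \log C_k / \log K$, minus the prime sum
\begin{align*}
\mathcal{P}_k(f) = \frac{2}{\log K}\sum_{\mathfrak{p}}\sum_{j\geqslant 1}\frac{\log N\mathfrak{p}}{N\mathfrak{p}^{j/2}}\re\lp \psi_k(\mathfrak{p})^j \rp \hat{f}\lp \frac{j \log N\mathfrak{p}}{\log K} \rp,
\end{align*}
plus a digamma correction of size $O(1/\log K)$ from Stirling applied to $\Gamma_\C(s + Nk)$ centered at the functional equation's point of symmetry. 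Writing $\log C_k = \log|D| + 2\log N + 2\log k - 2\log(2\pi)$ and averaging via $\tfrac{1}{K}\sum_{k\leqslant K}\log k = \log K - 1 + O(1/K)$ produces the leading $\hat{f}(0)$ (the ``$1$'' of the symplectic kernel) together with the constants $\log\sqrt{|D|}$, $-\log(2\pi)$, $\log N$, and $-2$ in $\ell_0$.

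The bulk of the work lies in the averaged prime sum, which I decompose by prime type and by $j$. For inert rational primes $p$ (those with $\chi(p) = -1$), $\mathfrak{p} = \langle p \rangle$ gives $\psi_k(\mathfrak{p}) = 1$ identically, and for the (one or two) ramified primes, the natural generator $\sqrt{-d}$ yields $\alpha/\overline{\alpha} = -1$, so $\psi_k(\mathfrak{p}) = (-1)^{Nk} = 1$ since $N$ is even. For a split prime $\mathfrak{p} = \langle \alpha \rangle$, the inner average $\tfrac{1}{K}\sum_k (\alpha/\overline{\alpha})^{jNk}$ is a geometric sum of size $O(1/K)$; combining this with Hecke's equidistribution of angles of split primes and the support condition $\mathrm{supp}\,\widehat{f} \subset (-1,1)$ (which forces $N\mathfrak{p} < K$ in the relevant range) bounds the total split-prime contribution by $O((\log K)^{-2})$. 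The inert $j = 1$ sum, which is $k$-independent, is analyzed by partial summation using the Dedekind factorization $\zeta_\K = \zeta \cdot L(s,\chi)$ and the prime number theorem for $L(s,\chi)$: its leading order is exactly $-\int f(x)\tfrac{\sin(2\pi x)}{2\pi x}\,dx$, supplying the symplectic kernel correction, while the PNT remainder contributes the lower-order constants $-L'(1,\chi)/L(1,\chi)$ and $-\int_1^\infty t^{-2}(-t + \sum_{n\leqslant t}\Lambda(n))\, dt$ to $\ell_0$. The inert $j \geqslant 2$ sum converges absolutely and, after Taylor-expanding $\hat{f}$ at the origin, evaluates to $-2\sum_{p,\,\chi(p)=-1}\log p/(p^2-1)$; restricting to $p\geqslant 3$ and accounting for the $p = 2$ term (present exactly when $d\in\{3,11,19,43,67,163\}$, i.e.\ when $d \neq 2, 7$) separates off the $-(2\log 2/3)\mathbbm{1}(d\neq 2, 7)$ contribution. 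Finally, summing the ramified $j \geqslant 1$ contribution over the ramified primes and rationalizing gives the remaining $-\sqrt{d}\log d/(d-1)$ term.

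The most delicate step, which I expect to be the main obstacle, is the partial summation in the inert $j = 1$ case with enough precision to pin down both the Chebyshev integral and the $L'/L(1,\chi)$ constant: one writes $\sum_{p\text{ inert}} = \tfrac{1}{2}\sum_p(1 - \chi(p))$ and applies Abel summation against both $\theta(t)$ and its $\chi$-twist, tracking constants of integration to order $1/\log K$. The ramified analysis, although finite, requires unwinding $\psi_k$ on each of the eight Heegner fields $\K_d$ separately, and it is exactly the split between $d \in \{2, 7\}$ (where $p = 2$ is respectively ramified or split) and the remaining six $d$'s (where $p = 2$ is inert) that produces the conditional indicator term in $\ell_0$. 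The split-prime bound is more standard but leans essentially on Hecke's equidistribution theorem together with the support restriction $\mathrm{supp}\,\widehat{f} \subset (-1,1)$; widening the support would require much deeper input such as zero-density estimates for the family $\mathcal{F}$.
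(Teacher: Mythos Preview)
Your overall architecture---explicit formula, decomposition by splitting type, direct evaluation of the inert and ramified contributions, and a separate bound on the split contribution---matches the paper's Sections~5--6. Your extraction of the constants in $\ell_0$ from the conductor/digamma term and from the inert and ramified sums is correct in outline; the paper routes these instead through the identity $S_{\mathrm{inert}}+S_{\mathrm{ram}}=S_\zeta+S_L+S_{A'}+S_d+S_H$ and the contour-shift estimates of Section~4, but the two bookkeepings are equivalent.

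The genuine gap is in your split-prime bound. The assertion that the inner average $\tfrac{1}{K}\sum_{k\leqslant K}(\alpha/\overline{\alpha})^{jNk}$ is $O(1/K)$ is not true uniformly in $\mathfrak{p}$: the geometric sum is bounded by $2\,|e^{2iNj\theta_\alpha}-1|^{-1}$, and nothing you have invoked prevents $Nj\theta_\alpha$ from lying within, say, $1/K$ of a multiple of $\pi$ for some split prime of small norm, in which case the average is of size $O(1)$, not $O(1/K)$. Hecke's equidistribution theorem is an asymptotic statement about the angular distribution of split primes and gives no control whatsoever on how close an \emph{individual} angle can be to a fixed target, so it does not repair the argument. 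With only the trivial bound $\bigl|\sum_k\bigr|\leqslant K$ one gets $S_{\mathrm{split}}\ll K/\log K$, which is useless.

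What the paper actually uses here is a Liouville-type diophantine repulsion (Theorem~6.1, generalizing \cite[Lemma~2.1]{rudwax}): since the relevant quantity $-n(\ringint_\K)+a(\ringint_\K)^2\tan(\pi m/2N)$ is algebraic of bounded degree $Q$, the angle $\theta_I$ of any ideal $I$ not lying on one of the lines $\ell_m$ satisfies $|\theta_I-\pi m/(2N)|\gg \norm(I)^{-Q/2}$. This converts the dangerous factor $|e^{2iN\theta_I}-1|^{-1}$ into something polynomial in $\norm(I)$, after which a polar-coordinate integral (Lemma~6.2) gives $S_{\mathrm{split}}\ll_\varepsilon K^{\alpha-1+\varepsilon}$ for $\alpha=\sup\mathrm{supp}\,\widehat f<1$. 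This algebraic-number-theoretic input is the essential missing idea in your proposal; without it the split contribution cannot be controlled even under the support restriction.
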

By assuming the Grand Riemann Hypothesis (GRH) and the Ratios Conjecture (\cite{confarzir}), we also prove the following result.
\begin{theorem}\label{OneLevelDensityThm-RC}
Suppose that $f : \R \rightarrow \R$ is an even Schwarz function whose Fourier transform has compact support. Assume the GRH and the Ratios Conjecture. Then
\begin{align*}
\begin{split}
D \lp \mathcal{F}(K); f \rp = \int_{\R} f(x) \lp 1 - \frac{\sin(2 \pi x)}{2 \pi x} \rp \hsp \dd x + \frac{\ell_0}{\log K} \lp \hat{f}(0) - \hat{f}(1) \rp + O \lp \frac{1}{(\log K)^{2}} \rp,
\end{split}
\end{align*}
where $\ell_0$ is as in the statement of Theorem \ref{THMunconditional}.
\end{theorem}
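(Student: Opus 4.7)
The strategy follows the Conrey--Snaith template \cite{consnaith} for extracting a $1$-level density from the Ratios Conjecture, adapted to a Hecke family as in Waxman \cite{waxman}. First I would use Cauchy's theorem to rewrite each inner sum in $D(\mathcal{F}(K); f)$ as a contour integral,
\begin{align*}
\sum_{\rho \in Z_k} f \lp \frac{\gamma(\rho) \log K}{\pi} \rp = \frac{1}{2 \pi i} \oint_{\mathcal{C}} \frac{L_k'}{L_k}(s) \, f \lp \frac{(s - 1/2) \log K}{\pi i} \rp \dd s,
\end{align*}
with $\mathcal{C}$ a rectangular contour enclosing the zeros in the critical strip. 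Under GRH, $\mathcal{C}$ is deformed onto the critical line; using the functional equation for $L_k$ and the Schwarz decay of $f$, the result becomes a single integral on $\re(r) = 0$ of the (symmetrised) logarithmic derivative of $L_k$, plus an archimedean contribution from the gamma factors in the completed $L$-function.

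Second, I would average over $k \leqslant K$ and invoke the Ratios Conjecture. Differentiating the RC prediction for the averaged ratio $\frac{1}{K}\sum_{k \leqslant K} L_k(1/2 + \alpha)/L_k(1/2 + \gamma)$ in $\alpha$ and setting $\alpha = \gamma = r$ produces an asymptotic
\begin{align*}
\frac{1}{K} \sum_{k=1}^K \frac{L_k'}{L_k} \lp \frac{1}{2} + r \rp = \mathcal{A}(r) + \mathcal{B}(r; K) + O\lp K^{-1/2 + \epsilon} \rp,
\end{align*}
where $\mathcal{A}(r)$ is an arithmetic expression built from sums over prime ideals of $\ringint_\K$, closely tied to $L(1 + 2r, \chi)$ and the Euler product of $L_k$, and $\mathcal{B}(r; K)$ is the "swap" term carrying the functional-equation factor, whose leading behaviour is of the form $K^{-2r}$ times a $K$-independent Euler product. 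The individual simple poles of $\mathcal{A}$ and $\mathcal{B}$ at $r = 0$ cancel in the sum.

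Third, I would substitute this asymptotic into the averaged contour integral and change variables via $r = \pi i x / \log K$. The $\mathcal{A}$-part, being essentially independent of $K$, yields the Katz--Sarnak unitary symplectic kernel $\int f(x)(1 - \sin(2 \pi x)/(2 \pi x)) \dd x$ together with a correction of order $1/\log K$ proportional to $\hat{f}(0)$; the $\mathcal{B}$-part, whose oscillatory factor becomes $e^{-2 \pi i x}$ after the rescaling, produces via Fourier inversion a correction of the same order proportional to $-\hat{f}(1)$. Collecting and simplifying the arithmetic contributions --- the logarithmic derivative $L'(1, \chi)/L(1, \chi)$, the inert prime sum $\sum_{(-d/p) = -1} \log p /(p^2 - 1)$, the $\Lambda$-tail integral, and the constants $\log \sqrt{|D|} - \log 2 \pi + \log N - 2 - \sqrt{d}(\log d)/(d-1)$ coming from Stirling applied to the gamma factor of $L_k$ --- should produce exactly the $\ell_0$ of Theorem \ref{THMunconditional}, so that the final answer takes the form $\ell_0(\hat{f}(0) - \hat{f}(1))/\log K$.

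The principal obstacle will be the third step, and more precisely the task of showing that the \emph{same} constant $\ell_0$ appears as the coefficient of both $\hat{f}(0)$ and $-\hat{f}(1)$. This symmetry is forced by the functional equation, which relates $\mathcal{B}(r; K)$ to $\mathcal{A}(-r)$ up to the conductor factor, but its explicit verification requires delicate expansions of the Euler products near $r = 0$ together with careful bookkeeping of the residues that appear when they are combined. A useful consistency check is that for $\mathrm{supp}\, \hat{f} \subset (-1, 1)$ one has $\hat{f}(1) = 0$ by continuity, so Theorem \ref{OneLevelDensityThm-RC} specialises to Theorem \ref{THMunconditional} on the common range of admissible test functions.
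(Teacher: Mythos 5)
Your proposal follows essentially the same route as the paper: express the $1$-level density as an average of contour integrals of $L_k'/L_k$ plus the archimedean factor $X_k'/X_k$, differentiate the Ratios Conjecture prediction at $\alpha = \gamma = r$, substitute, rescale $r = \pi i \tau/\log K$, and Taylor-expand the integrand near $r = 0$. You have also correctly identified the central difficulty (showing that the coefficient of $-\hat{f}(1)$ equals that of $\hat{f}(0)$, which in the paper appears as the identity $\ell_1 = -\ell_0$ between the constants emerging from the $S_J$-estimate and those from $S_X, S_\zeta, S_L, S_{A'}, S_d, S_H$), and your consistency check against Theorem~\ref{THMunconditional} is correct.

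One bookkeeping error in step 3 should be flagged, because it would derail the computation if carried out literally. The Katz--Sarnak kernel does \emph{not} come from the $\mathcal{A}$-part alone. Writing the kernel as $\hat{f}(0) - \tfrac{1}{2}\int_{-1}^1 \hat{f}(\tau)\,\dd\tau$, the constant piece $\hat{f}(0)$ is supplied by the archimedean factor ($S_X$), while the oscillatory piece $-\tfrac{1}{2}\int_{-1}^1 \hat{f}(\tau)\,\dd\tau$ is produced precisely by the swap term $\mathcal{B}$: after rescaling, $\mathcal{B}$ carries the oscillation $e^{-2\pi i \tau}$, and the singular factor $1/(2\pi i \tau)$ combines with it to give $\tfrac{1}{2}f(0) - \tfrac{1}{2}\int_{-1}^1 \hat{f}(\tau)\,\dd\tau$ (cf.\ Lemma~\ref{SJ-est} and equation~(\ref{SJ-fhat0})). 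The $\mathcal{A}$-part, by contrast, is responsible for a singular term $-\tfrac{1}{2}f(0)$ (via the pole of $\zeta'/\zeta$ at $1$ in Lemma~\ref{Szetaeffective}) which cancels the $+\tfrac{1}{2}f(0)$ from $\mathcal{B}$, plus the $\hat{f}(0)/\log K$ arithmetic corrections. So the kernel is a three-way assembly of the archimedean, $\mathcal{A}$-, and $\mathcal{B}$-contributions, not a feature of $\mathcal{A}$ alone; the statement that $\mathcal{B}$ only produces the $\hat{f}(1)$ correction omits its essential role in producing the $\sin(2\pi x)/(2\pi x)$ part of the kernel.
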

\textsc{Remarks.} \par 
1) Although we give an explicit value for $L'(1, \chi)/L(1, \chi)$ in Lemma \ref{kroneckerlimitformulastuff}, this expression is rather intricate, and we therefore decided to keep the notation $L'(1, \chi) / L(1, \chi)$ in the statements of the theorems. \par 
2) We note that both Theorem \ref{THMunconditional} and Theorem \ref{OneLevelDensityThm-RC} verify the Katz--Sarnak Density Conjecture, but to different extents: While Theorem \ref{THMunconditional} requires $f$ to have a Fourier transform with very small support, Theorem \ref{OneLevelDensityThm-RC} holds without any such assumption.\par 
3) The appearance of $\left| \ringint_\K^\times \right|$ in the exponents of the characters $\psi_k$ is very natural, since $\psi_k$ must satisfy a condition related to the units in order to define a Hecke character on the ideals of $\ringint_\K$ (cf. \textsc{Section} 2.1). Compared to the setup in \cite{waxman}, we are considering a more general family of characters since we allow $N$ to be any multiple of the order of the unit group. The reason why we are able to handle this more general case is that we formulate and prove a generalization (Theorem \ref{anglesandlengthsyeeeah}) of the result \cite[Lemma 2.1]{rudwax} relating the arguments and norms of certain elements of $\ringint_\K$. \\ \par
Our approach is based on \cite{waxman}. We first prove Theorem \ref{OneLevelDensityThm-RC}, and this is accomplished in \textsc{Section 3}, where we also describe the Ratios Conjecture in detail for our family $\mathcal{F}$, and in \textsc{Section 4}. In \textsc{Section 5}, we use the explicit formula for our family $\mathcal{F}$ to give an unconditional asymptotic for the $1$-level density. \textsc{Section 6} is then a comparison between this and the conditional asymptotic, which leads to a proof of Theorem \ref{THMunconditional}. As mentioned, this comparison is facilitated by Theorem \ref{anglesandlengthsyeeeah}, which we also state and prove in \textsc{Section 6}. \\\par 
\textsc{Acknowledgements.} We are most grateful to Anders Södergren for suggesting the problem considered in this article, and for many helpful discussions. Moreover, we are indebted to Victor Ahlquist for pointing out the symmetry between the coefficients of $\widehat{f}(0)$ and $\widehat{f}(1)$ in the statement of Theorem \ref{OneLevelDensityThm-RC}. Finally, we would like to thank Ezra Waxman, Daniel Fiorilli, Julia Brandes, and Michael Björklund for valuable discussions and comments in relation to this project.
\section{Preliminaries}
We will now introduce Hecke characters on imaginary quadratic number fields, describe our concrete family of $L$-functions in more detail, and mention various standard results that we will need later. 
\subsection{Hecke Characters in Imaginary Quadratic Number Fields}
An equivalent formulation of $\K$ having class number $1$ is that its ring of integers $\ringint_\K$ is a principal ideal domain. Explicitly, we have
\begin{align}\label{thelittleringofintegers}
\ringint_\K = \begin{cases}
\Z [\sqrt{-d}] &\text{if } d \equiv 1, 2 \text{ (mod $4$)}, \\
\Z [  (1 + \sqrt{-d})/2  ] &\text{if } d \equiv 3 \text{ (mod $4$)}.
\end{cases}
\end{align}
By using the fact that any unit in $\ringint_\K$ must have norm $1$, one may easily prove that
\begin{align*}
\ringint_\K^\times \simeq 
\begin{cases}
\Z / 2 \Z &\text{if } d = 2 \text{ or } d \geqslant 5, \\
\Z / 6 \Z &\text{if } d = 3.
\end{cases}
\end{align*}
\par Since we will later make use of the lattice structure of $\ringint_\K$, we also describe these rings in the following way.
\begin{lemma}\label{iwasawaforringint}
Under the identification $\C \simeq \R^2$, we have 
\begin{align*}
\ringint_\K = 2^{1/4} \lp \begin{matrix}
2^{-1/4} & 0 \\
0 & 2^{1/4}
\end{matrix} \rp \Z^2 ,
\end{align*}
when $d = 2$; or, when $d \geqslant 3$, 
\begin{align*}
\ringint_\K = d^{1/4} 2^{-1/2} \lp \begin{matrix}
2^{-1/2}d^{-1/4} & 0 \\
0 & 2^{1/2} d^{1/4}
\end{matrix} \rp
\lp \begin{matrix}
1 & 0 \\
1/2 & 1
\end{matrix} \rp \Z^2.
\end{align*}
\end{lemma}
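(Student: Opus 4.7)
The plan is to read off generators of $\ringint_\K$ as a $\Z$-lattice in $\R^2$ directly from \eqref{thelittleringofintegers} under the identification $x + iy \mapsto (x, y)^T$, and then to put the resulting basis matrix into the stated Iwasawa-type form by a unimodular change of basis followed by a factorization into a scalar, a diagonal matrix, and a unipotent lower-triangular piece.

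For $d = 2$, the generators $1$ and $\sqrt{-2}$ of $\ringint_\K$ correspond to $(1, 0)^T$ and $(0, \sqrt{2})^T$, so $\ringint_\K = \diag (1, \sqrt{2}) \hsp \Z^2$. Pulling out the scalar $2^{1/4}$ to make the remaining diagonal part have determinant $1$ gives $\diag (1, \sqrt{2}) = 2^{1/4} \diag ( 2^{-1/4}, 2^{1/4} )$, which is precisely the expression in the lemma.

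For $d \geqslant 3$, every remaining Heegner number is $\equiv 3 \pmod{4}$, and the generators $1$ and $(1 + \sqrt{-d})/2$ correspond to $(1, 0)^T$ and $(1/2, \sqrt{d}/2)^T$. To match the lower-triangular form in the lemma, I would first make the unimodular change of basis to the pair $(1/2, \sqrt{d}/2)^T$ and $(0, \sqrt{d})^T$; note that $(0, \sqrt{d}) = 2 (1/2, \sqrt{d}/2) - (1, 0)$, and a direct check shows that the corresponding change-of-basis matrix has determinant $1$ and hence lies in $\SL_2(\Z)$. This yields $\ringint_\K = \begin{pmatrix} 1/2 & 0 \\ \sqrt{d}/2 & \sqrt{d} \end{pmatrix} \Z^2$. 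Factoring the resulting basis matrix as $\diag (1/2, \sqrt{d}) \cdot \begin{pmatrix} 1 & 0 \\ 1/2 & 1 \end{pmatrix}$ and then pulling out the scalar $d^{1/4} 2^{-1/2}$ from the diagonal part (so that the remaining diagonal matrix has determinant $1$) produces the form stated in the lemma; the only exponent checks needed are $d^{1/4} 2^{-1/2} \cdot 2^{-1/2} d^{-1/4} = 1/2$ and $d^{1/4} 2^{-1/2} \cdot 2^{1/2} d^{1/4} = \sqrt{d}$.

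The only real subtlety — not a serious obstacle — is that the natural $\Z$-basis of $\ringint_\K$ read off from \eqref{thelittleringofintegers} does not directly produce the lower-triangular basis matrix in the lemma, so one has to switch to the unimodularly equivalent basis $\lbr (1/2, \sqrt{d}/2)^T, (0, \sqrt{d})^T \rbr$ before the scalar--diagonal--unipotent decomposition materializes. Once this reordering is performed, everything reduces to routine $2 \times 2$ matrix arithmetic and a couple of exponent identities.
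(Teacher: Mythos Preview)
Your proof is correct and is essentially the same as the paper's: the paper performs the substitution $s = -2a + b$ (keeping $a$), which amounts precisely to your unimodular change of basis to $\lbr (1/2, \sqrt{d}/2)^\intercal, (0, \sqrt{d})^\intercal \rbr$, after which both arguments verify the scalar--diagonal--unipotent factorization by the same arithmetic checks.
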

\begin{proof}
When $d = 2$, this is simply a matter of expanding (\ref{thelittleringofintegers}). When $d \geqslant 3$, (\ref{thelittleringofintegers}) shows that any $\alpha \in \ringint_\K$ can be written as
\begin{align*}
-a + b/2 + b \sqrt{-d}/2 = s/2 + (s+2a)\sqrt{-d}/2
\end{align*}
with $s = -2a+b$. The decomposition of $\ringint_\K$ in this case now follows once we express $\ringint_\K$ using the variables $s$ and $a$.
\end{proof}
When $\K$ has class number $1$, a \textit{Hecke character} $\psi$ on $\K$ corresponds to a unique pair $(\chi, \chi_\infty)$ consisting of a generalized Dirichlet character $\chi$ (modulo some ideal $\mathfrak{m} \subset \ringint_\K$) and a unitary character $\chi_\infty$ on $\C^\times$. Conversely, given a pair of such characters, their (pointwise) product is a Hecke character provided that $\chi \cdot \chi_\infty$ is constant on $\ringint_\K^\times$, cf. \cite[eq. (3.80)]{iwakow}. Thus, in order for us to specify a Hecke character, it is enough to specify two characters
\begin{align*}
\chi : \big( \ringint_\K / \mathfrak{m} \big)^\times \rightarrow \C^\times, \quad \quad \chi_\infty : \C^\times \rightarrow S^1
\end{align*}
satisfying $\chi(u) \chi_\infty (u) = 1$ for all $u \in \ringint_\K^\times$. 
\par We now describe our concrete family of Hecke characters. Let $N$ be any positive integer multiple of $\left| \ringint_\K^\times \right|$. Since $\ringint_K$ is a principal ideal domain, for any $k \geqslant 1$ we can define the unitary character $\chi_{\infty, k}$ by 
\begin{align*}
\chi_{\infty, k}(I) = \chi_{\infty, k}(\alpha) = \lp \alpha /\bar{ \alpha} \rp^{N k}
\end{align*}
whenever $I = \langle \alpha \rangle$. This is well-defined since any two generators of $I$ will differ by a factor in $\ringint_K^\times$ where $\chi_{\infty, k}$ is identically equal to $1$. To make this into a Hecke character, we also need to specify a Dirichlet character that is compatible with $\chi_{\infty, k}$ in the above sense. However, we can simply take $\chi$ to be the trivial generalized Dirichlet character of modulus $\mathfrak{m} = \ringint_\K$. In this way we obtain the family of Hecke characters given by 
\begin{align*}
\psi_k( \langle \alpha \rangle) = \psi_k(\alpha) := \chi(\alpha) \cdot \chi_{\infty, k}(\alpha) = \chi_{\infty, k}(\alpha) = \lp \frac{\alpha}{\overline{\alpha}} \rp^{Nk}
\end{align*}
for $\alpha \in \ringint_\K \setminus \lbr 0 \rbr$. Since conjugation is an automorphism of $\C$, we note the relation $\overline{\psi_k} = \psi_{-k}$, which will be useful later on.\par 
In the literature it is common to write such unitary characters as $\chi_\infty (\alpha) = \lp \alpha / | \alpha | \rp^\ell$ for a suitable integer $\ell$ called the \textit{frequency}. In the case of our character $\psi_k$, we see that $\psi_k (\alpha) = \lp \alpha / |\alpha| \rp^{2Nk}$, so that $\psi_k$ has frequency $2Nk$. \par 
We note that $\psi_k$ can also be described explicitly as a function of the argument of $\alpha$, which will be convenient at certain points in the paper. Namely, if we write $\alpha = r e^{i \theta_\alpha}$, we have
\begin{align*}
\psi_k(\alpha ) = \lp \frac{r e^{i \theta_\alpha}}{r e^{- i \theta_\alpha}} \rp^{Nk} = e^{2i N k \theta_\alpha}.
\end{align*}
We also wish to speak of the "argument of the ideal $\langle \alpha \rangle$." A priori, this is not well-defined since $\langle u \alpha \rangle = \langle \alpha \rangle$ for any unit $u \in \ringint_\K^\times$. However, since any unit has argument equal to a multiple of $2 \pi / |\ringint_\K^\times |$, the effect of multiplying $\alpha$ with a unit $u$ is to change $\theta_\alpha$ by such a multiple. For this reason, by choosing $u$ appropriately, we can always ensure that the argument of $u \alpha$ lies in $\lb 0, 2 \pi / | \ringint_\K^\times | \rp$. Accordingly, the angle $\theta_{\langle \alpha \rangle}$ of the ideal $\langle \alpha \rangle$ is well-defined when taken in the interval $\lb 0, 2 \pi / | \ringint_\K^\times | \rp$.
\subsection{Hecke $L$-Functions and Their Zeros}
To each of the characters $\psi_k$ ($k \geqslant 1$) we can associate a \textit{Hecke $L$-function} given initially by the series and corresponding Euler product
\begin{align*}
L_k(s) = L_k (s, \psi_k) := \sum_{I \subset \ringint_\K \atop I \neq 0} \frac{\psi_k(I)}{\norm (I)^s} = \prod_{\mathfrak{p}} \frac{1}{1-\psi_k(\mathfrak{p})/\norm (\mathfrak{p})^s}, \quad \quad \re(s) > 1. 
\end{align*}
Let us immediately note that $L_{k} = L_{-k}$. Indeed, if $N(a,b)$ denotes the norm (see \textsc{Section} 2.3) of an element 
\begin{align*}
j(a,b) = \begin{cases}
a + i\sqrt{2}b &\text{ if } d = 2, \\
a + b(1+i \sqrt{d})/2 &\text{ if } d \geqslant 3,
\end{cases}
\end{align*}
we note that the map
$A_d = \lp \begin{smallmatrix}
1 & 0 \\
0 & -1
\end{smallmatrix} \rp 
+ \mathbbm{1}(d \geqslant 3) \cdot \lp \begin{smallmatrix}
0 & 1 \\
0 & 0
\end{smallmatrix} \rp $ 
preserves the norm $N(a,b)$ and satisfies $j \lp A_d (a,b)^\intercal \rp = \overline{j(a,b)}$.  In particular, $A_d$ defines a bijection on the set $\lbr (a,b) \in \Z^2 : N(a,b) \neq 0 \rbr$. Therefore, for $\re (s) > 1$, the trivial identity \cite[eq. (2.1)]{waxman} gives
\begin{align*}
L_{-k}(s) &= \frac{1}{\left| \ringint_\K^\times \right| }\sum_{N(a,b) \neq 0} \lp \frac{j(a,b)}{|j(a,b)|} \rp^{-2Nk} N(a,b)^{-s} \\
&= \frac{1}{\left| \ringint_\K^\times \right| }\sum_{N(a,b) \neq 0} \lp \frac{| j(a,b) |}{j(a,b)} \rp^{2Nk} N(a,b)^{-s} \\
&= \frac{1}{\left| \ringint_\K^\times \right| }\sum_{N(a,b) \neq 0} \lp \frac{\overline{j(a,b)}}{|j(a,b)|} \rp^{2Nk} N(a,b)^{-s} \\
&= \frac{1}{\left| \ringint_\K^\times \right| }\sum_{N(A_d(a,b)^\intercal ) \neq 0} \lp \frac{j\lp A_d (a,b)^\intercal \rp}{|j\lp A_d (a,b)^\intercal \rp |} \rp^{2Nk} N\lp A_d (a,b)^\intercal \rp^{-s} \\
&= \frac{1}{\left| \ringint_\K^\times \right| }\sum_{N(a,b) \neq 0} \lp \frac{j(a,b)}{|j(a,b)|} \rp^{2Nk} N(a,b)^{-s} = L_k (s).
\end{align*}
\par By a theorem of Hecke \cite[Theorem 3.8]{iwakow}, if $k \neq 0$ (so that $\psi_k$ is not the trivial character), $L_k$ admits an analytic continuation (which we will also denote by $L_k$) to the entire complex plane, and it satisfies the functional equation
\begin{align*}
\Lambda \lp s, \psi_k \rp = \frac{\tau \lp \psi_k \rp}{i^\ell \sqrt{\norm (\mathfrak{m})}} \Lambda \lp 1-s, \overline{\psi_k} \rp ,
\end{align*}
where $\mathfrak{m} = \ringint_\K$ is the modulus of $\psi_k$, $\ell = 2Nk$ is the frequency, and $\Lambda \lp s, \psi_k \rp$ denotes the \textit{completed $L$-function}
\begin{align*}
\Lambda \lp s, \psi_k \rp = \Lambda_k (s) :=  L_k (s) \frac{\big( |D| \norm (\mathfrak{m}) \big)^{s/2}}{(2 \pi)^s} \Gamma \big( s + |\ell|/2 \big),
\end{align*}
and where $\tau (\psi_k)$ denotes the Gauss sum
\begin{align*}
\tau \lp \psi_k \rp = \psi_k (\gamma) \psi_k (\mathfrak{c})^{-1} \sum_{\alpha \in \mathfrak{c}/\mathfrak{c} \mathfrak{m}} \exp \lp 2 \pi i \mathrm{Tr}\lp \alpha / \gamma \rp \rp,
\end{align*}
cf. \cite[eq. (3.86)]{iwakow}. Here $\gamma \in \ringint_K$ and $\mathfrak{c} \subset \ringint_K$ are arbitrary except for the requirements that $\mathfrak{c}$ should be an ideal, and that $\gamma$ and $\mathfrak{c}$ should satisfy $(\mathfrak{c}, \mathfrak{m}) = 1$ and $\mathfrak{c} \mathfrak{d} \mathfrak{m} = \langle \gamma \rangle$, where $\mathfrak{d}$ is the \textit{different} of $K$. In our case, we have $\tau (\psi_k) = 1$: Since we have $\mathfrak{m} = \ringint_K$ and $\mathfrak{d} = \langle \sqrt{D} \rangle$, these conditions are satisfied with $\mathfrak{c} = \ringint_K$ and $\gamma = \sqrt{D}$. In combination with the relation $\overline{\psi_k} = \psi_{-k}$ and the fact that the frequency $\ell$ of $\psi_k$ is $2Nk \equiv 0$ (mod $4$), this means that the root number of $L_k$ is $1$, and the functional equation assumes the simpler form
\begin{align}\label{completedfcteq}
\Lambda_k (s) = \Lambda_{-k} (1-s) = \Lambda_k (1-s),
\end{align}
where also the completed $L$-function can be described in the simpler form
\begin{align}\label{completedlfct}
\Lambda_k (s) =  L_k (s) \frac{|D|^{s/2}}{(2 \pi)^s} \Gamma \big( s + Nk \big).
\end{align}
Of course, it is also possible to recast the identity (\ref{completedfcteq}) as a statement about $L_k$ that does not explicitly involve $\Lambda_k$. Doing so, we find that
\begin{align}\label{Lfcteq}
L_k \lp s \rp = L_k(1-s) X_k(s),
\end{align}
where 
\begin{align}\label{fcteqXsimple}
X_k (s) :&= \frac{\Gamma (1-s+Nk)}{\Gamma (s+Nk)} |D|^{1/2 - s} (2 \pi)^{2s-1}.
\end{align}
\par If $K \geqslant 1$ is an integer, then as we mentioned in the introduction, we will use the notation 
\begin{align*}
\mathcal{F}(K) := \lbr L_k : 1 \leqslant k \leqslant K \rbr
\end{align*}
to denote our family of $L$-functions. We wish to normalize the zeros of this family so that they have mean spacing $1$. This of course warrants an understanding of the asymptotic number $N_k(T)$ of zeros of $L_k$ in the critical strip $0 \leqslant \re (z) \leqslant 1$ up to a given height $T$ as $k \rightarrow \infty$. Estimates of the count $N_k(T)$ for general $L$-functions are abundant in the literature, but these usually provide an asymptotic as $T \rightarrow \infty$. We therefore prove the following result, which gives an asymptotic expression for $N_k(T)$ when it is not the height, but rather the size of our family that tends to infinity.
\begin{prop}\label{countzeros}
Let $k \geqslant 1$, and assume the Riemann Hypothesis for $L_k$.  For $T > 0$, let 
\begin{align*}
N_k(T) := \# \Big\{ z \in \C : L_k(z) = 0, \, 0 \leqslant \re (z) \leqslant 1, \, -T \leqslant \im (z) \leqslant T \Big\}
\end{align*}
be the number of zeros of $L_k$ in the critical strip up to absolute height $T$. Then as $k \rightarrow \infty$, 
\begin{align*}
N_k(T) \sim \frac{2 T \log k}{\pi}.
\end{align*}
\end{prop}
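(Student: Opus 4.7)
The plan is to prove the asymptotic via a Riemann--von Mangoldt style argument: apply the argument principle to the completed $L$-function $\Lambda_k$ on a rectangular contour, exploit its functional and conjugation symmetries, and extract the leading order via Stirling's formula applied to the Gamma factor.

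I would first reduce to the upper half of the critical strip. Since $\Gamma \lp s + Nk \rp$ has no zeros in $\re(s) \geqslant 0$, the nontrivial zeros of $L_k$ coincide with those of $\Lambda_k$. The identity $L_{-k} = L_k$ from Section 2.2 combined with the general relation $\overline{L_k(s)} = L_{-k}(\bar s)$ gives $\overline{L_k(s)} = L_k(\bar s)$, so the zeros come in complex-conjugate pairs and $N_k(T) = 2 \mathcal{N}_k^+(T) + O(1)$, where $\mathcal{N}_k^+(T)$ denotes the number of zeros with $0 < \im(\rho) < T$. Applying the argument principle to $\Lambda_k$ on the rectangle $R$ with vertices $-\epsilon,\, 1+\epsilon,\, 1+\epsilon + iT,\, -\epsilon + iT$ (with $\epsilon > 0$ chosen to avoid zeros on $\partial R$) yields $2\pi \mathcal{N}_k^+(T) = \Delta_{\partial R} \arg \Lambda_k$.

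Next, I would estimate the change in argument on each side using the decomposition $\Lambda_k(s) = L_k(s) G_k(s)$, where $G_k(s) = |D|^{s/2} (2\pi)^{-s} \Gamma \lp s + Nk \rp$. On the bottom side (the real axis) $\Lambda_k$ is real-valued, so the change is $O(1)$. On the right side $\re(s) = 1+\epsilon$, the absolutely convergent Euler product gives $\Delta \arg L_k = O(1)$, while Stirling's formula yields $\im \log \Gamma \lp 1+\epsilon + iT + Nk \rp = T\log(Nk) + O(1/k)$, and the factors $|D|^{s/2}$ and $(2\pi)^{-s}$ contribute further $O(1)$; hence the right-side contribution is $T\log k + O(1)$. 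The functional equation $\Lambda_k(s) = \Lambda_k(1-s)$ together with conjugation symmetry transfers an identical contribution $T\log k + O(1)$ to the left side. A further application of Stirling, using that $\im \log \Gamma \lp \sigma + iT + Nk \rp = T\log(Nk) + O(1/k)$ uniformly for bounded $\sigma$, shows $\Delta_{\mathrm{top}} \arg G_k = O(1/k)$, so only $L_k$ contributes nontrivially along the top.

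The main obstacle is bounding $\Delta_{\mathrm{top}} \arg L_k$, the change of $\arg L_k \lp \sigma + iT \rp$ as $\sigma$ runs from $1+\epsilon$ to $-\epsilon$. Unconditionally, the Backlund--Jensen argument based on counting zeros in a disk provides only $O(\log k)$, which is not small enough to be absorbed into the main term when $T$ is fixed. This is precisely where the assumed Riemann Hypothesis for $L_k$ is needed: it permits a Selberg-type refinement (analogous to the classical bound $S(T) \ll \log T / \log \log T$ for the Riemann zeta function) yielding $\Delta_{\mathrm{top}} \arg L_k = O \lp \log k / \log \log k \rp = o(\log k)$ as $k \to \infty$. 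Assembling the four contributions gives $2\pi \mathcal{N}_k^+(T) = 2T\log k + o(\log k)$, and hence $N_k(T) = \frac{2T\log k}{\pi} + o(\log k) \sim \frac{2T\log k}{\pi}$, as claimed.
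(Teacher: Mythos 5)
Your proposal is correct and proceeds along essentially the same lines as the paper: the argument principle applied to the completed $L$-function, the main term extracted from Stirling's estimate for the Gamma factor, and the Riemann Hypothesis invoked to get a Selberg-type bound $S(T) \ll \log k / \log\log k$ for the argument term along the top of the contour. The paper simply delegates both the argument-principle framework (its Eq.\ (4.1)) and the conditional Selberg bound (its Thm.\ 5) to the reference of Carneiro, Chandee, and Milinovich rather than carrying them out explicitly as you do.
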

\begin{proof}
Let us write $X := 1/2 + Nk$. By \cite[Thm. 5]{carneiro} and \cite[Eq. (4.1)]{carneiro}, the main term of $N_k(T)$ comes from the integral 
\begin{align*}
\frac{1}{\pi} \int_{-T}^T \re\lp \frac{\Gamma'(X + it)}{\Gamma(X + it)} \rp \hsp \dd t = \frac{1}{\pi} \re \lp \int_{-T}^T \frac{\Gamma'(X + it)}{\Gamma(X + it)} \hsp \dd t \rp.
\end{align*}
Indeed, in the notation of that paper, we have 
\begin{align*}
L(z, \pi_\infty) = \frac{|D|^{z/2}}{(2 \pi)^z} \Gamma \big( z + Nk \big);
\end{align*}
and since the logarithmic derivative of the factor $|D|^{z/2}(2 \pi)^{-z}$ is constant, the integral of the logarithmic derivative of $\left| D \right|^{(X+it)/2}(2 \pi)^{-X-it}$ over the line $-T \leqslant t \leqslant T$ is at most a constant times $T$. \par 
Since $-i \log \Gamma (X+it)$ is a primitive function for the logarithmic derivative in the integrand above, and since our domain of integration lies in the right half-plane, we can use Stirling's formula $\log \Gamma(z) = z \log z - z - (\log z)/2 + O\lp \max \lbr 1, 1/z \rbr \rp$ to obtain
\begin{align*}
\int_{-T}^T \frac{\Gamma'(X + it)}{\Gamma(X - it)} \hsp \dd t
&= -i \cdot X \log \frac{X+iT}{X-iT} + T \log \lp X^2 + T^2 \rp \\
&\quad \quad - 2T + i \cdot \frac{1}{2} \log \frac{X+iT}{X-iT} + O\lp 1 \rp.
\end{align*}
By taking the real part and using that $T \log \lp X^2 + T^2 \rp = 2 T \log k + O_T (1)$, we therefore have
\begin{align*}
\frac{1}{\pi} \int_{-T}^T \re\lp \frac{\Gamma'(X + it)}{\Gamma(X + it)} \rp \hsp \dd t = \frac{2T \log k}{\pi} + O_T(1)
\end{align*}
as $k \rightarrow \infty$. This concludes the proof. 
\end{proof}
\subsection{Splitting Behaviour of Rational Primes}
We now record some facts about how a prime $p \in \Z$ behaves in the extension $\K = \Q (\sqrt{-d})$, where we recall that $d$ is one of the integers $2, 3, 7, 11, 19, 43,$ $67, 163$. For a rational prime $p$, we say that $p$ is \textit{ramified} if $p = q^2 u$ for a prime $q \in \ringint_\K$ and $u \in \ringint_\K^\times$, \textit{split} if $p = q \overline{q} u$ for a prime $q \in \ringint_\K$ and $u \in \ringint_\K^\times$, or \textit{inert} if $p$ is a prime element in $\ringint_\K$. Any rational prime belongs to exactly one of these categories. \par 
By using the norm map $\ringint_\K \rightarrow \Z_+$, we immediately see that 
if $p$ is ramified or split, then some prime element $q \in \ringint_\K$ has norm $p$. Thus, we can reduce certain aspects of the study of the splitting behaviour in $\K$ of rational primes to studying which primes are represented over $\Z$ by the quadratic forms defined by the norm. We now describe these forms in more detail. \par 
If $d \equiv 3$ (mod $4$), any integer in $\K$ has the form $a + b(1 + \sqrt{-d})/2$, and its norm is 
\begin{align}\label{1mod4-norm}
\norm \lp a + \frac{b(1 + \sqrt{-d})}{2} \rp = \lp a + \frac{b}{2} \rp^2 + \frac{db^2}{4} = a^2 + ab + \frac{d+1}{4}b^2.
\end{align}
Note that this form has integer coefficients precisely because of the congruence condition on $d$. On the other hand, if $d \equiv 1, 2$ (mod $4$), then any integer in $\K$ has the form $a + b \sqrt{-d}$, and its norm is
\begin{align}\label{23mod4-norm}
\norm \lp a + b \sqrt{-d} \rp = a^2 + d b^2.
\end{align}
\par 
It is well-known that a rational prime $p$ is ramified in $\K$ if and only if $p \mid D$, the discriminant of $\K$. Since 
\begin{align*}
|D| = \begin{cases}
d &\text{ if } d \equiv 3 \text{ (mod $4$)}, \\
4d &\text{ if } d \equiv 1,2 \text{ (mod $4$)},
\end{cases}
\end{align*}
we see that $p$ is ramified in $\K$ if and only if $p = d$. \par 
By \cite[Prop I.8.5]{neukirch} we know that if $p \neq 2$, then $p$ splits in $\K$ if and only if the Legendre symbol $(-d/p)$ equals $1$. On the other hand, if $p = 2$ we can use (\ref{1mod4-norm}) to see that $p$ only splits in $\K$ when $d = 7$. (This also implies that whenever $d \neq 2, 7$, the prime $2$ is inert in $\ringint_\K$.) Indeed, if $d \equiv 3$ (mod $4$) and $2$ splits in $\K$, there are rational integers $a$ and $b$ such that $q = a + b \lp 1 + \sqrt{-d} \rp/2$ and
\begin{align*}
\norm (q) - 2 = a^2 + ab + \frac{d+1}{4} b^2 - 2 = 0,
\end{align*}
which implies (by solving the quadratic equation in $a$) that $8 - db^2$ has to be a square number. Given that $d \equiv 3$ (mod $4$), this is only possible if $d = 7$ and $b = \pm 1$. This shows that the element $q = (1 + \sqrt{-7})/2$ has norm $2$. It is clear that $2 = q \overline{q}$ and that $q$ and $\overline{q}$ are not equivalent up to multiplication by units. \par 
In the remainder of the paper, we will also need to work with the "Legendre symbol modulo composite numbers." We therefore introduce the Kronecker symbol, which will also be convenient for the purpose of expressing the splitting behavior of a prime. If $n = p_1^{e_1} \cdots p_k^{e_k}$ is (the prime factorization of) a positive integer and $a \in \Z$, the \textit{Kronecker symbol} $(a/n)$ is defined as
\begin{align*}
\lp \frac{a}{n} \rp = \prod_{i = 1}^k \lp \frac{a}{p_i} \rp^{e_i},
\end{align*}
where the symbol $(a / p_i )$ appearing on the right-hand side is the Legendre symbol if $p_i$ is an odd prime, or otherwise given by 
\begin{align*}
\lp \frac{a}{2} \rp = \begin{cases}
0 &\text{ if $a$ is even,}\\
1 &\text{ if $a \equiv \pm 1$ (mod $8$),}\\
-1 &\text{ if $a \equiv \pm 3$ (mod $8$).}
\end{cases}
\end{align*}
We note that if $a \not \equiv 3$ (mod $4$), then the map $n \mapsto (a/n)$ is a quadratic Dirichlet character, cf. \cite[§5]{davenport}. In particular, for $d$ equal to any of the eight non-trivial Heegner numbers that we are considering, $\chi(n) := (-d/n)$ is a Dirichlet character. \\ \par 
We can now sum up the above discussion about the splitting behavior of rational primes in the ring $\ringint_\K$ in the following lemma. 
\begin{lemma}\label{splittingbehaviour}
Let $p$ be a rational prime. In the number field $\K$,
\begin{align*}
p \text{ is } \begin{cases}
\text{ramified} &\text{if } \chi(p) = 0 \text{ (that is, $p = d$)}, \\
\text{split} &\text{if } \chi(p) = 1, \\
\text{inert} &\text{if } \chi(p) = -1.
\end{cases}
\end{align*}
\end{lemma}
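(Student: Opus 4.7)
The plan is to verify the lemma case by case, since it essentially repackages facts already assembled earlier in this section. Exhaustiveness of the three categories (ramified, split, inert) for a rational prime $p$, together with exhaustiveness of the three possible values of $\chi(p)$, means it suffices to prove any two of the three equivalences; for transparency I would prove all three directly.

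I would first dispose of the ramified case. The Kronecker symbol $\chi(p) = (-d/p)$ vanishes exactly when $\gcd(-d, p) > 1$: this is the standard description of the Legendre symbol when $p$ is odd, and for $p = 2$ it follows from the defining clause of the Kronecker symbol at $2$, which gives $(-d/2) = 0$ precisely when $d$ is even. Since every Heegner number on our list is itself a rational prime, this equivalence amounts to $p = d$, which is exactly the ramification criterion $p \mid D$ recalled just above (using that $|D| = d$ or $4d$ according to the congruence class of $d$ mod $4$).

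For odd $p \neq d$, the symbol $\chi(p)$ is simply the Legendre symbol $(-d/p)$, so the split/inert dichotomy follows immediately from the Neukirch criterion stated earlier: $p$ splits iff $\chi(p) = 1$ and is inert iff $\chi(p) = -1$. The only delicate case is $p = 2$ with $d \neq 2$, where one must verify that $\chi(2) = (-d/2)$ equals $+1$ precisely when $d = 7$ (the case in which $2$ splits by the earlier discussion) and $-1$ for the remaining six Heegner numbers (when $2$ is inert). I would confirm this by computing $-d$ modulo $8$: for $d = 7$ one has $-7 \equiv 1 \pmod{8}$, so $\chi(2) = 1$, while each of the other six values $d \in \lbrace 3, 11, 19, 43, 67, 163 \rbrace$ satisfies $d \equiv 3 \pmod{8}$, so $-d \equiv 5 \equiv -3 \pmod{8}$, which gives $\chi(2) = -1$ via the defining clause of the Kronecker symbol at $2$.

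There is essentially no serious obstacle: the lemma is a compact summary of the case analysis already performed in the preceding paragraphs, and the only genuine bit of computation is the short tabulation of $\chi(2)$ across the non-ramified Heegner numbers. If any step deserves attention it is this last one, since it is the only place where one must actively use the restriction to the Heegner list rather than cite the general theory recalled above.
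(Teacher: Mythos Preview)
Your proposal is correct and follows essentially the same route as the paper: the lemma is presented there without a separate proof, as a summary of the case analysis carried out in the paragraphs immediately preceding it (ramification via $p \mid D$, the Neukirch criterion for odd $p$, and the explicit treatment of $p=2$ via the norm form). Your write-up matches this structure, and your explicit tabulation of $(-d/2)$ modulo $8$ for the remaining Heegner numbers makes precise the one step the paper leaves implicit, namely that the Kronecker symbol at $2$ agrees with the split/inert behaviour established there.
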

We end this section with the following elementary lemma whose proof we include for the sake of completeness.
\begin{lemma}\label{standardalgnt}
Let $\mathfrak{p} \subset \ringint_\K$ be a prime ideal. Then $\mathfrak{p}$ lies over a rational prime $p$ (that is, $\mathfrak{p} \cap \Z = p \Z$) if and only if $\mathfrak{p} \mid p \ringint_\K$. Hence, the prime ideals with norm equal to a power of $p$ are precisely the prime ideals dividing $p \ringint_\K$.
\end{lemma}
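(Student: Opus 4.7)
The plan is to prove the stated biconditional first, and then deduce the norm characterization from it, invoking only the fact that $\ringint_\K$ is a Dedekind domain together with the standard behaviour of ideals under contraction.

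For the forward direction of the biconditional, I would start from the assumption $\mathfrak{p} \cap \Z = p \Z$. This immediately gives $p \in \mathfrak{p}$, hence $p \ringint_\K \subseteq \mathfrak{p}$; and in a Dedekind domain containment of ideals is the same as reverse divisibility, so $\mathfrak{p} \mid p \ringint_\K$. For the converse, if $\mathfrak{p} \mid p \ringint_\K$ then $p \ringint_\K \subseteq \mathfrak{p}$, so $p \in \mathfrak{p} \cap \Z$. The contraction $\mathfrak{p} \cap \Z$ is a prime ideal of $\Z$ and it is non-zero (it contains, for instance, the norm of any non-zero element of $\mathfrak{p}$), so it is of the form $q \Z$ for a rational prime $q$. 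Since $p \in q\Z$ forces $q \mid p$, one concludes $q = p$.

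For the second statement, I would use that $\ringint_\K$ is a free $\Z$-module of rank $2$, so $\ringint_\K / p \ringint_\K$ has $p^2$ elements. If $\mathfrak{p} \mid p \ringint_\K$, then $p \ringint_\K \subseteq \mathfrak{p}$, so $\ringint_\K / \mathfrak{p}$ is a quotient of $\ringint_\K / p \ringint_\K$, giving $\norm (\mathfrak{p}) \in \{p, p^2\}$. Conversely, if $\norm (\mathfrak{p}) = p^f$ for some $f \geqslant 1$, the residue field $\ringint_\K / \mathfrak{p}$ has characteristic $p$, so $p \cdot 1 = 0$ in $\ringint_\K / \mathfrak{p}$, i.e. $p \in \mathfrak{p}$; then $\mathfrak{p} \cap \Z$ is a prime ideal of $\Z$ containing $p\Z$, hence equals $p\Z$ by maximality, and the first part of the lemma yields $\mathfrak{p} \mid p \ringint_\K$.

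There is no real obstacle here; the entire argument is a direct application of standard facts about Dedekind domains (prime-ideal contraction, equivalence of containment and divisibility, and the $\Z$-module structure of $\ringint_\K$). The only point requiring minor care is verifying that $\mathfrak{p} \cap \Z$ is a non-zero prime of $\Z$, which is why I would explicitly remark that a non-zero element of $\mathfrak{p}$ has a non-zero norm lying in $\mathfrak{p} \cap \Z$.
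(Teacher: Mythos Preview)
Your proof is correct and follows essentially the same route as the paper's: both directions of the biconditional are handled by the standard ``containment is reverse divisibility'' argument together with the observation that $\mathfrak{p}\cap\Z$ is a non-zero prime (hence maximal) ideal of $\Z$. The paper in fact stops after the biconditional and treats the ``Hence'' as immediate, whereas you spell out the norm characterization via the $\Z$-module structure of $\ringint_\K$; this extra detail is harmless and arguably clearer.
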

\begin{proof}
Suppose that $\mathfrak{p}$ lies over $p$. Then clearly $p \in \mathfrak{p}$, and by the ideal property of $\mathfrak{p}$ we therefore have $p \ringint_\K \subset \mathfrak{p}$, which means $ \mathfrak{p} \mid p \ringint_\K$. \par 
Conversely, if $\mathfrak{p} \mid p \ringint_\K$, then $\mathfrak{p} \supset p \ringint_\K$, and hence $\mathfrak{p} \cap \Z \supset p \ringint_\K \cap \Z \supset p \Z$. Since $p \Z$ is a maximal ideal in $\Z$ and $\mathfrak{p} \cap \Z \subset \Z$ must be a proper ideal, it follows that $p \Z = \mathfrak{p} \cap \Z$. Therefore $\mathfrak{p}$ lies over $p$.
\end{proof}
\section{Implications of the Ratios Conjecture}
In this section, we describe the $L$-functions Ratios Conjecture which is due to Conrey, Farmer, and Zirnbauer (\cite{confarzir}), generalizing a conjecture of Farmer about the Riemann zeta function (see \cite{farmer}). We show how the conjecture implies strong estimates for the $1$-level density of the zeros of the family $\family (K)$ as $K \rightarrow \infty$. \par 
The \textit{Ratios Conjecture} states that a sum of ratios of (products of) $L$-functions evaluated at certain parameters should obey a specific asymptotic estimate. We now describe the recipe from \cite{confarzir} for conjecturing such an asymptotic. However, for the sake of simplicity, we do not describe the most general case possible. In the case of two $L$-functions in the numerator and denominator, one considers
\begin{align*}
Q(s, \bm{\alpha}, \bm{\gamma}; \chi) = \frac{L(s + \alpha_1, \chi) L (s + \alpha_2, \chi)}{L(s + \gamma_1, \chi) L(s + \gamma_2, \chi)},
\end{align*}
where $L(s, \chi)$ is the $L$-function associated to a character $\chi$ and satisfying the functional equation
\begin{align*}
L(s, \chi) = X(s, \chi) L(1-s, \overline{\chi}).
\end{align*}
The recipe is as follows: 
\begin{itemize}
    \item \textit{Approximate functional equation for $L$} \\ Replace each $L$-function in the numerator of $Q(s, \bm{\alpha}, \bm{\gamma}; \chi)$ with the two main terms from its approximate functional equation, completely disregarding the remainder term. 
    \item \textit{Infinite series for $1/L$}\\ 
    Replace each reciprocal $L$-function with its expression as an infinite series involving a suitable Möbius function. 
    \item \textit{Extend ranges and regroup factors}\\
    Extending the ranges of all series to infinity and multiplying out the resulting expression, write each of the four resulting terms as 
    \begin{align*}
    \big( \text{product of root numbers $\varepsilon_\chi$} \big) \times \big( \text{product of $X(\cdot, \chi)$-factors} \big) \times \big( \text{sum over $n_1, n_2, n_3, n_4$} \big),
    \end{align*}
    where $n_1, n_2, n_3, n_4$ are the indexing variables from the two approximate functional equations and the two infinite series giving the reciprocal $L$-functions. 
    \item \textit{Average factors over the family} \\
    Replace each product of root numbers, each product of $X(\cdot, \chi)$-factors, and each summand in the last factor with their respective averages over the family $\mathcal{F} = \lbr \chi \rbr$. Denote the resulting expression by $M(s, \bm{\alpha}, \bm{\gamma})$.
    \item \textit{Statement of the conjecture}\\
    The conjecture now states that for any $\varepsilon > 0$,
    \begin{align*}
    \sum_{\chi \in \mathcal{F}} Q(s, \bm{\alpha}, \bm{\gamma}; \chi)w \lp q(\chi) \rp &= \lp 1 + O \lp e^{(-1/2 + \varepsilon) q ( \chi)} \rp \rp \sum_{\chi \in \mathcal{F}} M(s, \bm{\alpha}, \bm{\gamma}) w \lp q(\chi) \rp,
    \end{align*} 
    where $q (\chi) := \left| X'(1/2, \chi) \right|$ denotes the \textit{log conductor} of $\chi$, and $w$ is a suitable weight function.
\end{itemize}
\par A later addition to the Ratios Conjecture \cite[Sect. 2]{consnaith} is that such an asymptotic is expected to hold provided that $\alpha$ and $\gamma$ satisfy the constraints
\begin{align}\label{conjcondsforalphagamma}
-\frac{1}{4} < \mathrm{Re}(\alpha) < \frac{1}{4}, \quad \frac{1}{\log K} \ll \mathrm{Re}(\gamma) < \frac{1}{4}, \quad \mathrm{Im}(\alpha), \, \mathrm{Im}(\gamma) \ll_\varepsilon K^{1 - \varepsilon}.
\end{align}
\subsection{Ingredients for the Recipe}
To follow the recipe outlined above in the case of the expression
\begin{align}\label{expressionofinterest}
R_K \lp \alpha, \gamma \rp = \frac{1}{K} \sum_{k = 1}^K \frac{L_k (1/2 + \alpha)}{L_k (1/2 + \gamma)},
\end{align}
we first need to describe the approximate functional equation for $L_k$ and obtain an expression for the reciprocal $L_k^{-1}$ as an infinite series. \par 
We begin by describing the approximate functional equation, which involves writing $L_k$ in a different way that is more reminiscent of classical Dirichlet $L$-functions. To this end, we observe that
\begin{align*}
L_k (s) = \sum_{I \subset \ringint \atop I \neq 0} \frac{\psi_k (I)}{\norm (I)^s} 
= \sum_{n \geqslant 1} \lp \sum_{\norm (I) = n} \psi_k (I) \rp n^{-s} 
= \sum_{n \geqslant 1} \frac{A_k (n)}{n^s} 
\end{align*}
for $\re (s) > 1$, with 
\begin{align*}
A_k (n) := \sum_{\norm (I) = n} \psi_k (I), \quad \quad n \geqslant 1.
\end{align*}
We note that $A_k$ is real-valued. Indeed, this follows from the definition of $\psi_k$ and the fact that for any $\alpha \in \ringint_\K$,
\begin{align*}
\norm \lp \langle \alpha \rangle \rp = \alpha \overline{\alpha} = \norm \lp \langle \overline{\alpha} \rangle \rp.
\end{align*}
Hence, any ideal that contributes to the sum defining $A_k (n)$ is accompanied by its conjugate ideal, provided that these are different. (If they are not, then their contribution to $A_k$ is clearly real.) Complex conjugation of $A_k$ therefore amounts to permuting the terms in the sum, which of course leaves $A_k$ unchanged.
\par Using the series defining $L_k$ and the functional equation (\ref{Lfcteq}), we can now describe the approximate functional equation of $L_k$ as
\begin{align}\label{approxfcteq}
\nonumber L \lp s, \psi_k \rp &\approx \sum_{n < x} \frac{A_k (n)}{n^s} + X_k (s) \sum_{n < y} \frac{\overline{A_k (n)}}{n^{1-s}} \\
&= \sum_{n < x} \frac{A_k (n)}{n^s} + X_k (s) \sum_{n < y} \frac{A_k (n)}{n^{1-s}},
\end{align}
where $x$ and $y$ are positive real parameters.\par 
As mentioned above, the recipe of the Ratios Conjecture also requires us to obtain a formula for the reciprocal function $L_k^{-1}$. Taking the reciprocal of the Euler product and using the fact that $\psi_k$ and the ideal norm are completely multiplicative, we see that
\begin{align*}
\frac{1}{L_k (s)} 
&= \prod_{\mathfrak{p}} \lp 1 - \frac{\psi_k (I)}{\norm(I)^s} \rp \\
&= 1 - \sum_{n = \mathfrak{p}_1} \frac{\psi_k \lp \mathfrak{p}_1 \rp }{\norm \lp \mathfrak{p}_1 \rp^s}  +
\sum_{n = \mathfrak{p}_1 \mathfrak{p}_2} \frac{\psi_k \lp \mathfrak{p}_1 \mathfrak{p}_2 \rp }{\norm \lp \mathfrak{p}_1 \mathfrak{p}_2 \rp^s} 
- \sum_{n = \mathfrak{p}_1 \mathfrak{p}_2 \fracp_3} \frac{\psi_k \lp \mathfrak{p}_1 \mathfrak{p}_2 \fracp_3 \rp }{\norm\lp \mathfrak{p}_1 \mathfrak{p}_2 \fracp_3 \rp^s} + \cdots \\
&= \sum_{I \subset \ringint \atop I \neq 0} \frac{\mu (I) \psi_k (I)}{\norm(I)^s},
\end{align*}
where 
\begin{align*}
\mu(I) = \begin{cases}
(-1)^{n} &\text{if } I \text{ is the product of $n$ distinct primes},\\
0 &\text{otherwise},
\end{cases}
\end{align*}
is the natural analogue of the Möbius function. With
\begin{align*}
\mu_k (n) := \sum_{\norm(I) = n} \mu (I) \psi_k (I),
\end{align*}
we therefore obtain the formula
\begin{align}\label{lfctreciprocal}
\frac{1}{L_k (s)} = \sum_{n \geqslant 1} \frac{\mu_k (n)}{n^s}, \quad \quad \re(s) > 1.
\end{align}
\begin{lemma}\label{muAmultiplicative}
For any $k \geqslant 1$, the functions $\mu_k$ and $A_k$ are multiplicative.
\end{lemma}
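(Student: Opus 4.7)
The plan is to reduce multiplicativity of $\mu_k$ and $A_k$, which are defined as sums over ideals of $\ringint_\K$ with a prescribed norm, to the multiplicativity of $\psi_k$, $\mu$ and $\norm$ on the ideal level. Concretely, for coprime positive integers $m, n$ I will produce a bijection between ideals $I \subset \ringint_\K$ with $\norm(I) = mn$ and pairs $(J_1, J_2)$ of ideals with $\norm(J_1) = m$ and $\norm(J_2) = n$, under which $I = J_1 J_2$ with $J_1$ and $J_2$ coprime.

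To construct the bijection, I will use unique factorization of ideals in $\ringint_\K$ together with Lemma \ref{standardalgnt}: each prime ideal $\fracp$ of $\ringint_\K$ lies over a unique rational prime $p$, and in that case $\norm(\fracp)$ is a power of $p$. Given $I$ with $\norm(I) = mn$ and $\gcd(m,n) = 1$, factor $I = \prod_i \fracp_i^{a_i}$; since $\gcd(m,n)=1$, each rational prime $p$ divides exactly one of $m$ or $n$, so the prime ideals $\fracp_i$ appearing in $I$ split into two disjoint sets according to which of $m, n$ their residue characteristic divides. Letting $J_1$ collect the prime-power factors lying over primes dividing $m$ and $J_2$ collect those lying over primes dividing $n$, we get $I = J_1 J_2$ with $(J_1, J_2) = \ringint_\K$ and $\norm(J_1) = m$, $\norm(J_2) = n$ (using complete multiplicativity of $\norm$ on ideals). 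Conversely, any such pair multiplies back to an $I$ with $\norm(I) = mn$, and unique factorization ensures this assignment is inverse to the decomposition above.

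Once the bijection is set up, multiplicativity follows immediately. For $A_k$, since $\psi_k$ is completely multiplicative on ideals,
\begin{align*}
A_k(mn) = \sum_{\norm(I) = mn} \psi_k(I) = \sum_{\norm(J_1) = m} \sum_{\norm(J_2) = n} \psi_k(J_1)\psi_k(J_2) = A_k(m) A_k(n).
\end{align*}
For $\mu_k$, the same bijection combined with the identity $\mu(J_1 J_2) = \mu(J_1)\mu(J_2)$ (valid because $J_1$ and $J_2$ are coprime, so their prime factorizations are disjoint and $J_1 J_2$ is squarefree iff both are) yields $\mu_k(mn) = \mu_k(m)\mu_k(n)$.

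There is essentially no hard step; the only thing that must be stated carefully is why prime ideals above distinct rational primes are automatically coprime, and that this lets us split the ideal factorization cleanly according to the factorization $mn$ of the norm. Lemma \ref{standardalgnt} does exactly this work, so the proof is little more than an invocation of unique factorization plus the observation that $\norm$, $\psi_k$, and $\mu$ all factor across coprime ideals.
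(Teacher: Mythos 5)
Your proof is correct and follows the same strategy as the paper's: reduce multiplicativity of $A_k$ and $\mu_k$ to a bijection between ideals of norm $mn$ (with $m,n$ coprime) and coprime pairs of ideals of norms $m$ and $n$, obtained from unique factorization of ideals together with Lemma \ref{standardalgnt}. You spell out a few steps the paper leaves implicit — in particular why $\mu(J_1 J_2) = \mu(J_1)\mu(J_2)$ for coprime $J_1, J_2$, and you argue directly for arbitrary coprime $m,n$ rather than first reducing to two prime powers — but the underlying argument is the same.
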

\begin{proof}
It suffices to show that if $p \neq q$ are rational primes and $a, b \geqslant 1$ are rational integers, then 
\begin{align*}
\mu_k \lp p^a q^b \rp = \mu_k \lp p^a \rp \mu_k \lp q^b \rp, \quad \quad A_k \lp p^a q^b \rp = A_k \lp p^a \rp A_k \lp q^b \rp.
\end{align*}
We prove the claim for $\mu_k$ as the proof of the claim for $A_k$ is analogous. \par 
Since $\psi_k$ is completely multiplicative, we have
\begin{align*}
\mu_k \lp p^a \rp \mu_k \lp q^b \rp = \sum_{\norm\lp I_1 \rp = p^a  \atop \norm\lp I_2 \rp = q^b} \mu \lp I_1 I_2 \rp \psi_k \lp I_1 I_2 \rp. 
\end{align*}
It remains to show that all ideals $I$ of norm $p^a q^b$ have the form $I_1 I_2$ for ideals $I_1$ and $I_2$ of norm $p^a$ and $q^b$, respectively. However, this follows immediately from the unique factorization of ideals in $\ringint_\K$ in combination with the properties of the norm map.
\end{proof}
\subsection{Following the Recipe}
In accordance with the recipe outlined above, we now use (\ref{approxfcteq}) and (\ref{lfctreciprocal}) and compute
\begin{align}\label{RC1-3}
\frac{L_k (1/2 + \alpha)}{L_k (1/2 + \gamma)} \approx \sum_{m, n \geqslant 1} \frac{\mu_k (m)A_k (n)}{m^{1/2 + \gamma}n^{1/2 + \alpha}} + X_k (1/2 + \alpha) \sum_{m, n \geqslant 1} \frac{\mu_k(m) A_k(n)}{m^{1/2 + \gamma} n^{1/2 - \alpha}},
\end{align}
where we extended the ranges of summation to infinity in accordance with the recipe provided above. We now average the individual factors. We begin with the following lemma.
\begin{lemma}\label{waxmanslemma3.1}
As $K \rightarrow \infty$, we have
\begin{align}\label{Xkaverage}
\begin{split}
\frac{1}{K} \sum_{k = 1}^K X_k (1/2 + \alpha) &= \frac{1}{1-2 \alpha}  \lp \frac{2 \pi}{KN \sqrt{|D|}} \rp^{2 \alpha} + O_\alpha \lp \frac{1}{K} \rp + O_\alpha \lp \frac{1}{K^{1 + 2 \alpha}} \rp.
\end{split}
\end{align}
\end{lemma}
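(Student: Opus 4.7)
The plan is to start from the explicit formula (\ref{fcteqXsimple}), which gives
\begin{align*}
X_k(1/2+\alpha) = |D|^{-\alpha}(2\pi)^{2\alpha}\,\frac{\Gamma(1/2-\alpha+Nk)}{\Gamma(1/2+\alpha+Nk)},
\end{align*}
and reduce the average over $k$ to an elementary sum through Stirling's formula followed by Euler--Maclaurin. The constraints (\ref{conjcondsforalphagamma}) on $\alpha$ (in particular $|\alpha|$ is small and bounded away from $1/4$ in real part) ensure all asymptotic expansions are valid uniformly.

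First, I would apply the standard ratio form of Stirling's asymptotic expansion
\begin{align*}
\frac{\Gamma(z+a)}{\Gamma(z+b)} = z^{a-b}\lp 1 + O_{a,b}(1/z)\rp, \quad |z|\to\infty,
\end{align*}
with $z = Nk$, $a = 1/2-\alpha$, $b=1/2+\alpha$. This yields
\begin{align*}
X_k(1/2+\alpha) = |D|^{-\alpha}(2\pi)^{2\alpha}(Nk)^{-2\alpha}\lp 1 + O_\alpha(1/k)\rp,
\end{align*}
so that, pulling out the constants,
\begin{align*}
\frac{1}{K}\sum_{k=1}^{K} X_k(1/2+\alpha) = \lp \frac{2\pi}{N\sqrt{|D|}}\rp^{2\alpha}\cdot \frac{1}{K}\sum_{k=1}^{K} k^{-2\alpha} + E_K(\alpha),
\end{align*}
where the error $E_K(\alpha)$ inherits the $O_\alpha(1/k)$ factor and is bounded by $O_\alpha(K^{-1})\sum_{k=1}^K k^{-1-2\re(\alpha)} \ll_\alpha K^{-1}$, under the Ratios Conjecture constraint $\re(\alpha) > -1/4$.

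Next, I would estimate $\sum_{k=1}^K k^{-2\alpha}$ by Euler--Maclaurin (or simply by comparing with $\int_1^K t^{-2\alpha}\,dt$). This gives
\begin{align*}
\sum_{k=1}^K k^{-2\alpha} = \frac{K^{1-2\alpha}}{1-2\alpha} + \zeta(2\alpha) + O\lp K^{-2\re(\alpha)}\rp,
\end{align*}
valid on the relevant strip (and extended analytically through $\zeta$). Dividing by $K$ and multiplying by the constant $(2\pi/(N\sqrt{|D|}))^{2\alpha}$, the leading term produces exactly
\begin{align*}
\frac{1}{1-2\alpha}\lp \frac{2\pi}{KN\sqrt{|D|}}\rp^{2\alpha},
\end{align*}
the $\zeta(2\alpha)/K$ contribution is absorbed into $O_\alpha(1/K)$, and the Euler--Maclaurin error contributes $O_\alpha(K^{-1-2\alpha})$. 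Combined with $E_K(\alpha)$ this gives the claimed asymptotic.

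No step is truly hard; the only places that require care are (i) making the implicit constants in Stirling's expansion uniform in $\alpha$ under (\ref{conjcondsforalphagamma}), which is fine since $|\alpha|$ stays in a compact set away from $-1/2 - Nk$ for all $k\geq 1$, and (ii) keeping track of which error term ($O_\alpha(1/K)$ vs.\ $O_\alpha(K^{-1-2\alpha})$) each source contributes to, so that the two separate error statements in (\ref{Xkaverage}) are both visible and neither is double-counted.
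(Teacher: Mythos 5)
Your proposal follows essentially the same route as the paper, which simply defers to Waxman's Lemma~3.1 with the substitution $\mathbf{k} = 1/2 + Nk$; your Stirling expansion of the ratio $\Gamma(1/2-\alpha+Nk)/\Gamma(1/2+\alpha+Nk)$ followed by Euler--Maclaurin on $\sum_{k\leq K} k^{-2\alpha}$ is exactly that argument. One small imprecision: the bound $\sum_{k=1}^{K} k^{-1-2\re(\alpha)} \ll_\alpha 1$ holds only for $\re(\alpha) > 0$, not for the full Ratios-Conjecture range $\re(\alpha) > -1/4$; for $\re(\alpha) < 0$ the partial sum grows like $K^{-2\re(\alpha)}$, so $E_K(\alpha) \ll_\alpha K^{-1-2\re(\alpha)}$ rather than $K^{-1}$. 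This does not affect the conclusion, since that contribution is absorbed by the second error term $O_\alpha(K^{-1-2\alpha})$ already present in the statement (and in the downstream application one in fact takes $\re(\alpha) \gg 1/\log K > 0$), but the intermediate claim as written is slightly too strong.
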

\begin{proof}
We can argue exactly as in the proof of \cite[Lemma 3.1]{waxman}. The only thing we need to take into account is that we have $\textbf{k} = 1/2 + Nk$ in the notation of that paper.
\end{proof}
It remains for us to compute the average of the summands in (\ref{RC1-3}). Here it is fruitful to rewrite the sums appearing there as products in order to take advantage of the multiplicative nature of the function $\mu_k$. To that end, with the help of Lemma \ref{muAmultiplicative} we note that
\begin{align}\label{rewriteasproduct}
\sum_{m ,n \geqslant 1} \frac{\mu_k (m)A_k (n)}{m^{1/2 + \gamma}n^{1/2 + \alpha}} = \prod_{p \atop \text{ prime}} \sum_{m,n \geqslant 0} \frac{\mu_k \lp p^m \rp A_k \lp p^n \rp}{p^{m(1/2 + \gamma)}p^{n(1/2 + \alpha)}}.
\end{align}
\par At this point, we will describe the values of $\mu_k$ and $A_k$ on prime powers more precisely.
\begin{lemma}\label{muonprimepowers}
Let $p$ be a rational prime. Then 
\begin{align*}
\mu_k \lp p^m \rp = 
\begin{cases}
1 &\text{if } m = 0, \\
-A_k (p) &\text{if } m = 1,\\
-1 &\text{if $m = 2$ and $p$ is inert,}\\
1 &\text{if $m = 2$ and $p$ splits,}\\
0 &\text{otherwise}.
\end{cases}
\end{align*}
\end{lemma}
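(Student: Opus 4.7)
The plan is to determine which ideals of $\ringint_\K$ have norm equal to $p^m$, and then evaluate the defining sum $\mu_k(p^m) = \sum_{\norm(I) = p^m} \mu(I)\psi_k(I)$ directly. By Lemma \ref{standardalgnt}, such ideals are precisely the products of prime ideals dividing $p\ringint_\K$, so the structure of the sum depends entirely on how $p$ splits in $\ringint_\K$. I would therefore proceed by case analysis using Lemma \ref{splittingbehaviour}.

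\textbf{Ramified case} ($\chi(p) = 0$, i.e., $p = d$): Here $p\ringint_\K = \mathfrak{p}^2$ with $\norm(\mathfrak{p}) = p$, so the only ideal of norm $p^m$ is $\mathfrak{p}^m$. This is squarefree only for $m \in \{0,1\}$, yielding $\mu_k(1) = 1$ and $\mu_k(p) = -\psi_k(\mathfrak{p}) = -A_k(p)$, while $\mu_k(p^m) = 0$ for $m \geqslant 2$. \textbf{Inert case} ($\chi(p) = -1$): The ideal $p\ringint_\K$ is itself prime with norm $p^2$, so ideals of norm $p^m$ exist only for even $m = 2j$ and are equal to $\mathfrak{p}^j = (p\ringint_\K)^j$. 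This is squarefree only for $j \in \{0,1\}$; since $\psi_k(p\ringint_\K) = (p/\overline{p})^{Nk} = 1$, we obtain $\mu_k(1) = 1$, $\mu_k(p^2) = -1$, and $\mu_k(p^m) = 0$ otherwise. In particular $A_k(p) = 0$, so the identity $\mu_k(p) = -A_k(p) = 0$ holds trivially. \textbf{Split case} ($\chi(p) = 1$): We have $p\ringint_\K = \mathfrak{p}\overline{\mathfrak{p}}$ with $\mathfrak{p} \neq \overline{\mathfrak{p}}$ and each factor of norm $p$. The ideals of norm $p^m$ are $\mathfrak{p}^a \overline{\mathfrak{p}}^b$ with $a + b = m$, and squarefreeness forces $a, b \in \{0,1\}$, i.e.\ $m \leqslant 2$. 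For $m = 1$ the two contributions give $\mu_k(p) = -\psi_k(\mathfrak{p}) - \psi_k(\overline{\mathfrak{p}}) = -A_k(p)$; for $m = 2$ only the squarefree product $\mathfrak{p}\overline{\mathfrak{p}} = p\ringint_\K$ contributes, giving $\mu_k(p^2) = +\psi_k(p\ringint_\K) = 1$; and $\mu_k(p^m) = 0$ for $m \geqslant 3$.

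Collecting these three cases yields exactly the tabulated values in the statement, where the inert case $m=2$ contributes $-1$ and the split case $m=2$ contributes $+1$, as claimed. There is no real obstacle here: the proof is just a bookkeeping exercise, and the only subtlety worth noting is the compatibility check that $\mu_k(p) = -A_k(p)$ holds uniformly across all three cases, including the inert one where both sides vanish because no ideal of $\ringint_\K$ has norm $p$.
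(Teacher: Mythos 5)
Your proof is correct and follows essentially the same route as the paper: both rely on Lemma \ref{standardalgnt} to classify ideals of norm $p^m$ and then read off $\mu_k$ from the definition, with the only difference being that you organize the argument by the splitting type of $p$ rather than by the cases listed in the statement. Your version is somewhat more explicit — it verifies the $m = 1$ identity $\mu_k(p) = -A_k(p)$ uniformly across all three splitting types and spells out $\psi_k(p\ringint_\K) = 1$ — but these are details the paper leaves implicit, not a different method.
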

\begin{proof}
The first two claims follow immediately from the definition of $\mu_k$. As for the third claim, we have $\mu_k \lp p^2 \rp = -1$ since only the prime ideal $\langle p \rangle$ has norm $p^2$ by Lemma \ref{standardalgnt}. \par 
Turning to the fourth claim, let us suppose that $\langle p \rangle$ has prime factors $\mathfrak{p}_1$ and $\mathfrak{p}_2$. In this case, there is only one squarefree ideal of norm $p^2$, namely $\langle p \rangle$, since only the prime ideals dividing $\langle p \rangle$ have norms equal to a power of $p$ by Lemma \ref{standardalgnt}. \par 
We now prove the fifth and final claim. We first examine the case where $m = 2$ and $p$ is ramified in $\ringint_\K$ with $\langle p \rangle = \mathfrak{q}^2$ for some prime ideal $\mathfrak{q}$. This immediately implies that $\mu_k (p^2) = 0$ since the only ideal of norm $p^2$ is $\mathfrak{q}^2$ by Lemma \ref{standardalgnt}. For the final case, we assume $m \geqslant 3$. To prove the claim, it is enough to show that the norm of a product of distinct prime ideals lying over $p$ is at most $p^2$. However, this is clear from Lemma \ref{standardalgnt} once we consider the three possible splitting behaviours of $p$. 
\end{proof}
We also need the following description of the function $A_k$ on prime powers.
\begin{lemma}\label{Akonprimepowers}
$A_k$ assumes the following values on prime powers (where we understand that the case $n = 0$ takes precedence over the remaining cases):
\begin{align*}
A_k (p^n) = 
\begin{cases}
1 &\text{if } n = 0, \\[+0.5em]
(q^n / \, \overline{q}^n)^{Nk} &\text{if $\langle p \rangle = \langle q \rangle^2$,}\\[+0.5em]
1 &\text{if } p \text{ is inert and } n \text{ is even}, \\[+0.5em]
0 &\text{if } p \text{ is inert and } n \text{ is odd}, \\[+0.5em]
\sssum_{j = -n/2}^{n/2} \psi_k (q)^{2j} &\text{if $\langle p \rangle = \langle q \rangle \langle \overline{q} \rangle$ and $n$ is even},\\[+0.5em]
\sssum_{j = -(n+1)/2}^{(n-1)/2} \psi_k (q)^{2j+1} &\text{if $\langle p \rangle = \langle q \rangle \langle \overline{q} \rangle$ and $n$ is odd}.
\end{cases}
\end{align*}
\end{lemma}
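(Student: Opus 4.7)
The plan is to systematically compute $A_k(p^n) = \sum_{\norm(I) = n} \psi_k(I)$ by using Lemma \ref{standardalgnt} to restrict attention to ideals built from prime divisors of $p\ringint_\K$, and then to split according to the three cases of Lemma \ref{splittingbehaviour}. Throughout, we exploit the fact that $\psi_k$ is completely multiplicative, together with the unitarity relation $\psi_k(\bar{q}) = \overline{\psi_k(q)} = \psi_k(q)^{-1}$, which follows from $\overline{\psi_k} = \psi_{-k}$ and the defining formula $\psi_k(\alpha) = (\alpha/\bar\alpha)^{Nk}$.

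First, observe that $A_k(1) = \psi_k(\ringint_\K) = 1$, so the case $n = 0$ is trivial. For $n \geqslant 1$, Lemma \ref{standardalgnt} tells us that every ideal $I$ with $\norm(I) = p^n$ is a product of prime ideals dividing $\langle p \rangle$. The norm of any prime ideal over $p$ equals either $p$ or $p^2$, and this determines the enumeration in each case:

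\textbf{Ramified case, $\langle p \rangle = \langle q \rangle^2$.} Here $\langle q \rangle$ is the only prime over $p$, and $\norm(\langle q \rangle) = p$, so $\langle q \rangle^n$ is the unique ideal of norm $p^n$. Consequently
\begin{align*}
A_k(p^n) = \psi_k(q)^n = (q/\bar{q})^{Nkn} = (q^n/\bar{q}^n)^{Nk}.
\end{align*}

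\textbf{Inert case.} Here $\langle p \rangle$ is itself prime with norm $p^2$, so ideals of norm $p^n$ only exist when $n$ is even, in which case $\langle p \rangle^{n/2}$ is the unique one. Since $p \in \Z$ equals its own conjugate, $\psi_k(p) = 1$, giving $A_k(p^n) = 1$ for $n$ even and $0$ for $n$ odd.

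\textbf{Split case, $\langle p \rangle = \langle q \rangle \langle \bar{q} \rangle$ with $\langle q \rangle \neq \langle \bar q \rangle$.} Here every ideal of norm $p^n$ has the form $\langle q \rangle^a \langle \bar q \rangle^b$ with $a + b = n$, and by uniqueness of prime factorization these are all distinct. Using multiplicativity of $\psi_k$ together with $\psi_k(\bar{q}) = \psi_k(q)^{-1}$,
\begin{align*}
\psi_k\bigl(\langle q \rangle^a \langle \bar{q} \rangle^b\bigr) = \psi_k(q)^{a-b}.
\end{align*}
As $a$ ranges over $0, 1, \dots, n$, the exponent $a - b = 2a - n$ runs over $-n, -n+2, \dots, n$ in steps of two. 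Reindexing by $j = (a - b)/2$ when $n$ is even, or $a - b = 2j + 1$ when $n$ is odd, produces the two claimed formulas.

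There is no real obstacle here; the argument is essentially a bookkeeping exercise. The only point requiring slight care is the split case, where one must verify that the map $a \mapsto \langle q \rangle^a \langle \bar q \rangle^{n-a}$ is a bijection onto the set of norm-$p^n$ ideals (which follows from unique factorization in $\ringint_\K$, guaranteed by the class number $1$ hypothesis) and then carefully translate the resulting arithmetic progression of exponents into the stated index ranges.
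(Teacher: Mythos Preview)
Your proof is correct and follows essentially the same route as the paper's own argument: both invoke Lemma~\ref{standardalgnt} to enumerate the ideals of norm $p^n$ as products of the primes above $p$, then split into the ramified, inert, and split cases and compute $\psi_k$ on each resulting ideal. Your explicit use of the unitarity relation $\psi_k(\bar q)=\psi_k(q)^{-1}$ in the split case is a minor stylistic difference, but the substance is identical.
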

\begin{proof}
Since only $\langle 1 \rangle = \ringint_\K$ has norm $1$, the first claim follows immediately from the definition. In the following we suppose that $n \geqslant 1$. \par 
Suppose that $p$ is ramified in $\ringint_\K$ with $\langle p \rangle = \mathfrak{q}^2$. Then Lemma \ref{standardalgnt} immediately implies that the only ideal of norm $p^n$ is $\mathfrak{q}^n$, which proves the claim. \par 
Suppose that $p$ is inert. Then there is no prime ideal of norm $p$. Moreover, the ideal $\langle p \rangle$ is prime and has norm $p^2$, and it is the only such prime ideal. It follows that if $I$ is any ideal of norm $p^n$, then $n$ must be even, and in this case $I = \langle p \rangle^{n/2} = \langle p^{n/2} \rangle$. This yields the third and fourth claims.
\par Turning to the final claim, we assume that $p$ splits as $q \overline{q}$ in $\ringint_\K$. Then clearly $\langle q \rangle$ and $\langle \overline{q} \rangle$ are the only prime ideals of norm $p$. Moreover, as we assumed that $p$ is not ramified, it is clear that $\langle q \rangle \neq \langle \overline{q} \rangle$. Similarly as before, we also see that no prime ideals of norm $p^2$ can exist. It follows that if $I$ has norm $p^n$, then $I = \langle q \rangle^{j} \langle \overline{q} \rangle^{n-j} = \langle q^j \overline{q}^{n-j} \rangle$ for some $j = 0, \ldots, n$. Thus, 
\begin{align*}
A_k (p^n) = \sum_{j = 0}^n \lp \frac{q^j \overline{q}^{n-j}}{\overline{q}^j q^{n-j}} \rp^{Nk} = \sum_{j = 0}^n \lp \frac{q^{2j-n}}{\overline{q}^{2j-n}} \rp^{Nk},
\end{align*}
and therefore
\begin{align*}
A_k (p^n) =
\begin{cases}
\sum_{j = -n/2}^{n/2} \psi_k (q)^{2j} &\text{if $n$ is even},\\[+1.5em]
\sum_{j = -(n+1)/2}^{(n-1)/2} \psi_k (q)^{2j+1} &\text{if $n$ is odd}.
\end{cases}
\end{align*}
This concludes the proof.
\end{proof}
In accordance with the Ratios Conjecture, we now compute the (asymptotic) averages of the function $\mu_k (p^m) A_k (p^n)$ as $k = 1, \ldots, K$. We will denote this by $\delta_p (m,n)$ so that
\begin{align*}
\delta_p (m,n) = \lim_{K \rightarrow \infty} \frac{1}{K} \sum_{k = 1}^K \mu_k (p^m) A_k (p^n).
\end{align*}
The existence of this limit will be clear from the consideration of the special cases of $p$ (split, inert, or ramified). In anticipation of this, we furthermore write
\begin{align*}
\delta_p (m,n) = 
\begin{cases}
\delta_\mathrm{in}(m,n) &\text{if $p$ is inert}\\
\delta_\mathrm{sp}(m,n) &\text{if $p$ splits}\\
\delta_\mathrm{ram}(m,n) &\text{if $p$ is ramified}.
\end{cases}
\end{align*}
\begin{lemma}\label{limitdescription}
We have
\begin{align*}
\delta_\mathrm{in}(m,n) &= 
\begin{cases}
0 &\text{if } m = 1 \text{ or } m \geqslant 3 \text{ or } n \text{ is odd,}\\
1 &\text{if } m = 0 \text{ (and $n$ is even),}\\
-1 &\text{if } m = 2 \text{ (and $n$ is even),}
\end{cases} \\
\delta_\mathrm{sp}(m,n) &= 
\begin{cases}
1 &\text{if $m = 0$ and $n$ is even,}\\
-2 &\text{if $m = 1$ and $n$ is odd,}\\
1 &\text{if $m = 2$ and $n$ is even,}\\
0 &\text{otherwise,}\\
\end{cases} \\
\delta_\mathrm{ram}(m,n) &= 
\begin{cases}
1 &\text{if $m = 0$,}\\ 
-1 &\text{if $m = 1$,}\\
0 &\text{if $m \geqslant 2$.}
\end{cases}
\end{align*}
\end{lemma}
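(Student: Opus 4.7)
The plan is to treat the three cases (inert, split, ramified) separately, combining the explicit formulas from Lemmas~\ref{muonprimepowers} and~\ref{Akonprimepowers} with an orthogonality property of $\psi_k$ that is needed only in the split case.

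The inert and ramified cases collapse immediately, since in each of them the product $\mu_k(p^m)A_k(p^n)$ turns out to be independent of $k$. If $p$ is inert, then $A_k(p)=0$ by Lemma~\ref{Akonprimepowers}, whence $\mu_k(p)=0$, and the remaining values $\mu_k(p^m)A_k(p^n)$ can be read off directly from the two lemmas. If $p=d$ is ramified, the prime ideal above $p$ is generated by $\sqrt{-d}$, so $\sqrt{-d}/\overline{\sqrt{-d}}=-1$ and $A_k(p^n)=(-1)^{nNk}=1$ for all $k,n$, the last equality using that $N$ is a positive multiple of $|\ringint_\K^\times|\geqslant 2$ and hence even. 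The values of $\delta_\mathrm{ram}$ then follow directly from the expression for $\mu_k(p^m)$ in Lemma~\ref{muonprimepowers}.

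The split case $\langle p\rangle=\langle q\rangle\langle\bar q\rangle$ with $\langle q\rangle\neq\langle\bar q\rangle$ is the main content. The key input is that $q/\bar q$ is not a root of unity: any root of unity in $\K$ lies in $\ringint_\K^\times$, so if $q/\bar q=\zeta\in\ringint_\K^\times$, then $q^2=\zeta\cdot q\bar q=\zeta p$ would force $\langle q\rangle^2=\langle p\rangle=\langle q\rangle\langle\bar q\rangle$, and by unique factorization of ideals $\langle q\rangle=\langle\bar q\rangle$, a contradiction. Consequently $(q/\bar q)^{Nr}\neq 1$ for every nonzero integer $r$, and the standard geometric series bound gives
\begin{align*}
\frac{1}{K}\sum_{k=1}^K\psi_k(q)^r=\frac{1}{K}\sum_{k=1}^K\lp\frac{q}{\bar q}\rp^{Nrk}=O_r\lp\frac{1}{K}\rp
\end{align*}
as $K\to\infty$, while for $r=0$ this average is identically equal to $1$.

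Substituting the expressions for $\mu_k(p^m)$ and $A_k(p^n)$ from Lemmas~\ref{muonprimepowers} and~\ref{Akonprimepowers} into $\mu_k(p^m)A_k(p^n)$ and applying this orthogonality then finishes the proof. For $m=0$ and $m=2$, $\mu_k(p^m)$ equals $\pm 1$ independently of $k$, and only the $j=0$ term in the sum defining $A_k(p^n)$ contributes non-vanishingly to the average; this requires $n$ to be even. For $m=1$, writing $\mu_k(p)=-(\psi_k(q)+\psi_k(q)^{-1})$ and multiplying by $A_k(p^n)$ shifts the exponents in the sum for $A_k(p^n)$ by $\pm 1$: when $n$ is even every resulting exponent is odd and hence nonzero, so the average is $0$, whereas when $n$ is odd the exponent $0$ is attained in exactly two terms of the shifted sum, producing $\delta_\mathrm{sp}(1,n)=-2$. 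The case $m\geqslant 3$ is trivial because $\mu_k(p^m)=0$. The only mildly subtle step is the non-rootness of $q/\bar q$ in the split case; everything else is bookkeeping.
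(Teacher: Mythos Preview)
Your proof is correct and follows the same approach as the paper, which simply invokes Lemmas~\ref{muonprimepowers} and~\ref{Akonprimepowers} together with the observation that $q/\overline{q}\in\ringint_\K^\times$ in the ramified case. Your treatment is considerably more explicit than the paper's one-line proof: in particular, you spell out the crucial fact that $q/\overline{q}$ is not a root of unity when $p$ splits (so that the averages $\tfrac{1}{K}\sum_k\psi_k(q)^r$ vanish for $r\neq 0$), which the paper leaves entirely implicit.
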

\begin{proof}
This follows immediately from Lemma \ref{muonprimepowers} and Lemma \ref{Akonprimepowers}. In the case of a ramified prime $p$, we also use the fact that $A_k (p^n) = \lp q^n / \, \overline{q}^n \rp^{Nk} = 1$, since in this case $q/ \, \overline{q} \in \ringint_\K^\times$. 
\end{proof}
Using Lemma \ref{limitdescription}, we can describe the limiting average of the right-hand side of (\ref{rewriteasproduct}) as follows. If the prime $p$ is inert, we write
\begin{align*}
G_\mathrm{in}(p; \alpha, \gamma) := \sum_{m,n \geqslant 0} \frac{\delta_\mathrm{in}(m,n)}{p^{m(1/2 + \gamma) + n(1/2 + \alpha)}},
\end{align*}
and we define $G_\mathrm{sp}(p; \alpha, \gamma)$ and $G_\mathrm{ram}(p; \alpha, \gamma)$ analogously. By using Lemma \ref{limitdescription}, we then deduce that 
\begin{align*}
\allowdisplaybreaks
G_\mathrm{in}(p ; \alpha, \gamma) &= \sum_{n \geqslant 0} p^{-n(1+2\alpha)} - p^{-(1+2 \gamma)} \sum_{n \geqslant 0} p^{-n(1 + 2 \alpha)} = \lp 1 - p^{-(1+2 \gamma)} \rp \lp 1 - p^{-(1+2 \alpha)} \rp^{-1},  \\[+1em]
G_\mathrm{sp}(p ; \alpha, \gamma) &= \lp 1 - 2p^{-(1 + \alpha + \gamma)} + p^{-(1 + 2 \gamma)} \rp \lp 1 - p^{-(1+2 \alpha)} \rp^{-1}, \\[+1.7em]
G_\mathrm{ram}(p; \alpha, \gamma) &= \sum_{n \geqslant 0} \lp p^{-n(1/2 + \alpha)} - p^{-(1/2 + \gamma)} p^{-n(1/2 + \alpha)} \rp = \lp 1 - p^{-(1/2 + \gamma)} \rp \lp 1 - p^{-(1/2 + \alpha) } \rp^{-1}.
\end{align*}
Moreover, we let
\begin{align*}
F_2 (\alpha, \gamma) := \begin{cases}
1, &\text{if } d = 2, \\
\lp 1 - 2^{-(\alpha + \gamma)} + 2^{-(1 + 2 \gamma)} \rp \lp 1 - 2^{-(1+2 \alpha)} \rp^{-1} &\text{if } d = 7, \\
\lp 1 - 2^{-(1+2 \gamma)} \rp \lp 1 - 2^{-(1+2 \alpha)} \rp^{-1} &\text{otherwise}. 
\end{cases}
\end{align*}
We now see that the product of $G_* (p ; \alpha, \gamma)$ over all rational primes equals
\begin{align}\label{whatamess}
\begin{split}
G(\alpha, \gamma) 
&= F_2 (\alpha, \gamma) \, G_\mathrm{ram}\lp d; \alpha, \gamma \rp  \prod_{p \geqslant 3 \atop (-d/p) = 1} G_\mathrm{sp}(p; \alpha, \gamma) \prod_{p \geqslant 3 \atop (-d/p) = -1} G_\mathrm{in}(p ; \alpha, \gamma) \\
&= \lp 1 - d^{-(1/2 + \gamma)} \rp \lp 1 - d^{-(1/2 + \alpha)} \rp^{-1} F_2(\alpha, \gamma) \prod_{p \geqslant 3 \atop (-d/p) = 1} \lp 1 - 2p^{-(1 + \alpha + \gamma)} + p^{-(1 + 2 \gamma)} \rp \\ 
 &\quad \quad \times \prod_{p \geqslant 3 \atop (-d/p) = -1}  \lp 1 - p^{-(1+2 \gamma)} \rp \prod_{p \geqslant 3 \atop p \neq d} \lp 1 - p^{-(1+2 \alpha)} \rp^{-1} \\
 &= \lp 1 - \mathbbm{1}(d \geqslant 3) \cdot d^{-(1+2 \alpha)} \rp \lp 1 - d^{-(1/2 + \gamma)} \rp \lp 1 - d^{-(1/2 + \alpha)} \rp^{-1} F_2(\alpha, \gamma) \\[+1.3em] 
 &\quad \quad \times  \prod_{p \geqslant 3 \atop (-d/p) = 1} \lp 1 - 2p^{-(1 + \alpha + \gamma)} + p^{-(1 + 2 \gamma)} \rp  \prod_{p \geqslant 3 \atop (-d/p) = -1}  \lp 1 - p^{-(1+2 \gamma)} \rp \\[+0.5em]
 &\quad \quad \times \hspace{1em} \prod_{p \geqslant 3} \lp 1 - p^{-(1+2 \alpha)} \rp^{-1} \\[+0.5em]
 &=  \lp 1 + d^{-(1/2 + \alpha)} \rp \lp 1 - d^{-(1/2 + \gamma)} \rp \tilde{F}_2 (\alpha, \gamma) \prod_{p \geqslant 3 \atop (-d/p) = 1} \lp 1 - 2p^{-(1 + \alpha + \gamma)} + p^{-(1 + 2 \gamma)} \rp \\ 
&\quad \quad \times \prod_{p \geqslant 3 \atop (-d/p) = -1}  \lp 1 - p^{-(1+2 \gamma)} \rp \zeta(1 + 2 \alpha),
\end{split}
\end{align}
when $\re (\alpha ) > 0$. Here $\tilde{F}_2$ is the function obtained by possibly removing the factor $\lp 1-2^{-(1+ 2 \alpha)} \rp^{-1}$ from $F_2$, i.e. 
\begin{align*}
\tilde{F}_2 (\alpha, \gamma) := \begin{cases}
1 &\text{if } d = 2, \\
1 - 2^{-(\alpha + \gamma)} + 2^{-(1 + 2 \gamma)} &\text{if } d = 7, \\
1 - 2^{-(1+2 \gamma)} &\text{otherwise}.
\end{cases}
\end{align*}
It will be convenient to introduce convergence factors in the two Euler products and thus bundle together all singularities in a number of zeta functions and Dirichlet $L$-functions. Doing so, and writing $\chi (p) = (-d/p)$, we find that our expression (\ref{whatamess}) equals
\begin{align*}
&\frac{\zeta(1 + 2 \alpha) L(1+2 \gamma, \chi)}{\zeta(1 + \alpha + \gamma) L(1 + \alpha + \gamma, \chi)} \lp 1 + d^{-(1/2 + \alpha)} \rp \lp 1 - d^{-(1/2 + \gamma)} \rp \lp 1 - d^{-(1+ \alpha + \gamma)} \rp^{-1} H_2(\alpha, \gamma) \\[+1em] 
&\quad \quad \times \prod_{p \geqslant 3 \atop (-d/p) = 1} \frac{\lp 1 - 2p^{-(1 + \alpha + \gamma)} + p^{-(1 + 2 \gamma)} \rp \lp 1 - p^{-(1+2 \gamma)} \rp}{\lp 1 - p^{-(1+\alpha + \gamma)} \rp^2} \\ 
&\quad \quad \times \prod_{p \geqslant 3 \atop (-d/p) = -1} \frac{ \lp 1 - p^{-(1+2 \gamma)} \rp \lp 1 + p^{-(1+2\gamma)} \rp}{\lp 1 - p^{-(1+\alpha + \gamma)} \rp \lp 1 + p^{-(1+\alpha + \gamma)} \rp },
\end{align*}
where the function $H_2$, defined by
\begin{align*}
H_2(\alpha, \gamma) = \begin{cases}
1 &\text{if } d = 2, \\
\lp 1 - 2^{-(1+2 \gamma)} \rp \lp 1 - 2^{-(1 + \alpha + \gamma)} \rp^{-2} \lp 1 - 2^{-(\alpha + \gamma)} + 2^{-(1 + 2 \gamma)} \rp &\text{if } d = 7, \\
\lp 1 - 2^{-2(1+2 \gamma)} \rp \lp 1 - 2^{-2(1 + \alpha + \gamma)} \rp^{-1} &\text{otherwise},
\end{cases}
\end{align*}
keeps track of the contributions from the prime $p = 2$ when $d \neq 2$. Indeed; for example, in case of the $L$-function $L(1 + 2 \gamma, \chi)$, we see that all of its factors are cancelled out by reciprocal factors in the two Euler products, except for factors corresponding to the primes $p = 2$ and $p = d$. If $d = 2$, then no such factor is missing as $\chi(2) = 0$, and if $d \geqslant 3$, only the factor corresponding to $p = 2$ is missing, namely the factor $1 - \chi(2) 2^{-(1 + 2 \gamma)}$. \par 
If we let
\begin{align*}
A_1(\alpha, \gamma) \, \, &:= \prod_{p \geqslant 3 \atop (-d/p) = 1} \frac{\lp 1 - 2p^{-(1 + \alpha + \gamma)} + p^{-(1 + 2 \gamma)} \rp \lp 1 - p^{-(1+2 \gamma)} \rp}{\lp 1 - p^{-(1+\alpha + \gamma)} \rp^2}, \\
A_{-1}(\alpha, \gamma) \, \, &:= \prod_{p \geqslant 3 \atop (-d/p) = -1} \frac{ \lp 1 - p^{-(1+2 \gamma)} \rp \lp 1 + p^{-(1+2\gamma)} \rp}{\lp 1 - p^{-(1+\alpha + \gamma)} \rp \lp 1 + p^{-(1+\alpha + \gamma)} \rp },\\
F_d (\alpha, \gamma)  \, \, &:= \frac{\lp 1 + d^{-(1/2 + \alpha)} \rp \lp 1 - d^{-(1/2 + \gamma) }\rp}{1 - d^{-(1 + \alpha + \gamma)}},
\end{align*}
we therefore have 
\begin{align}\label{partialprogressisalsoprogress}
G(\alpha, \gamma) &= \frac{\zeta(1 + 2 \alpha) L(1+2 \gamma, \chi)}{\zeta(1 + \alpha + \gamma) L(1 + \alpha + \gamma, \chi)} F_d (\alpha, \gamma)  H_2(\alpha, \gamma) A_1(\alpha, \gamma) A_{-1}(\alpha, \gamma).
\end{align}
Combining this with (\ref{Xkaverage}), we see that the prediction of the Ratios Conjecture is that with $\alpha$ and $\gamma$ subject to the conditions (\ref{conjcondsforalphagamma}), the average $R_K (\alpha, \gamma)$ is equal to
\begin{align}\label{RCpred1}
R_K (\alpha, \gamma) &= G(\alpha, \gamma) + \frac{1}{1-2 \alpha}  \lp \frac{2 \pi}{KN \sqrt{|D|}} \rp^{2 \alpha} G(-\alpha, \gamma) + O_{\varepsilon} \lp K^{-1/2 + \varepsilon} \rp.
\end{align}
\subsection{The Logarithmic Derivative}
If we know that the effective estimate (\ref{RCpred1}) is invariant under differentiation (in the sense that the derivatives of the main terms are equal, at least up to an error of roughly the same order of magnitude), we can use (\ref{RCpred1}) to describe an asymptotic for the average of the logarithmic derivative of $L_k$ over the family $\mathcal{F}(K)$. Since the logarithmic derivative $L_k' / L_k$ is intimately connected with the zeros and poles of $L_k$ by Cauchy's residue theorem, such an asymptotic estimate will allow us to study the asymptotics of the one-level density of the family $\mathcal{F}(K)$. \par 
To see that (\ref{RCpred1}) is, in fact, invariant under differentiation in the sense above, let $\Omega \subset \C$ be any open set where the function 
\begin{align*}
\alpha \mapsto f(\alpha) := R_K (\alpha, \gamma)-G(\alpha, \gamma) -  \frac{1}{1-2 \alpha}  \lp \frac{2 \pi}{KN \sqrt{|D|}} \rp^{2 \alpha} G(-\alpha, \gamma)
\end{align*}
is holomorphic. For $\alpha_0 \in \Omega$, choose $\delta > 0$ so that $\Omega$ contains the circle $C$ centered at $\alpha_0$ with radius $\delta$. Then by Cauchy's integral formula,
\begin{align*}
| f'(\alpha_0) | &\leqslant \frac{1}{2 \pi} \int_{C} \left| \frac{f(z)}{\lp z-\alpha_0 \rp^2} \right| \hsp |\dd z| \leqslant \frac{1}{2 \pi} \frac{1}{\delta^2} \cdot 2 \pi \delta \cdot \sup_{z \in C} |f(z)| = \frac{1}{\delta} \cdot O_\varepsilon \lp K^{-1/2 + \varepsilon} \rp,
\end{align*}
and the claim follows by linearity of differentiation. \par 
Since only the numerators in the sum defining $R_K (\alpha, \gamma)$ depend on $\alpha$, (\ref{RCpred1}) and the estimate obtained by differentiating (\ref{RCpred1}) therefore imply that
\begin{align}\label{logdev01}
\begin{split}
\frac{1}{K} \sum_{k = 1}^K \frac{L_k'(1/2 + r)}{L_k (1/2 + r)} &= \frac{\partial}{\partial \alpha} R_K (\alpha, \gamma)\bigg\rvert_{\alpha = \gamma = r} \\
& \approx \frac{\partial}{\partial \alpha} G (\alpha, \gamma) \bigg\rvert_{\alpha = \gamma = r} + \frac{\partial}{\partial \alpha}  \frac{1}{1-2 \alpha}  \lp \frac{2 \pi}{KN \sqrt{|D|}} \rp^{2 \alpha} G(-\alpha, \gamma) \bigg\rvert_{\alpha = \gamma = r}
\end{split}
\end{align}
for any $r \in \C$ satisfying the conditions
\begin{align}\label{condsonr}
\frac{1}{\log K} \ll \mathrm{Re}(r) < 1/4, \quad \quad \mathrm{Im}(r) \ll_\varepsilon K^{1-\varepsilon}.
\end{align}
\par By going through a computation identical to that in the proof of \cite[Lemma 3.4]{waxman} and observing that $F_d(r,r) = H_2 (r,r) = A_1 (r,r) = A_{-1}(r,r) = 1$, we now see that
\begin{align*}
\frac{\partial}{\partial \alpha} G(\alpha, \gamma) \bigg\rvert_{\alpha = \gamma = r} &= \frac{\zeta'(1+2r)}{\zeta(1+2r)} -\frac{L'(1 + 2 r, \chi)}{L(1+2r, \chi)} - \frac{d^{r + 1/2} \log d}{d^{2r+1} - 1} \\
&\quad + H_2'(r) - 2 \sum_{p \geqslant 3 \atop (-d/p) = -1} \frac{\log p}{p^{4r + 2} - 1},
\end{align*}
where 
\begin{align*}
H_2'(r) := \frac{\partial}{\partial \alpha} H_2 (\alpha, \gamma) \bigg\rvert_{\alpha = \gamma = r} = \begin{cases}
0 &\text{if } d = 2, \, 7 \\
-2 \log 2 \lp 2^{2(2r+1)} - 1 \rp^{-1} &\text{otherwise}.
\end{cases}
\end{align*}
\par It remains to compute the other partial derivative in (\ref{logdev01}). However, in the resulting sum only the term coming from differentiating the quotient of zeta- and $L$-functions survives on account of the pole of $\zeta (s)$ at $s = 1$. Thus, with
\begin{align*}
F_d (-r,r) &= 1 + \frac{d^{1/2 + r} - d^{1/2 - r}}{d-1}, \\
H_2 (-r,r) &= \begin{cases}
1 &\text{if } d = 2, \\
2^{1-2r}\lp 1 - 2^{-1-2r} \rp &\text{if } d = 7, \\
\tfrac{4}{3} \lp 1-2^{-2(2r+1)} \rp &\text{otherwise},
\end{cases} \\[+0.5em]
A_1(-r,r) &= \prod_{p \geqslant 3 \atop (-d/p) = 1} \frac{\lp 1 - 2p^{-1} + p^{-(1 + 2 r)} \rp \lp 1 - p^{-(1+2 r)} \rp}{\lp 1 - p^{-1} \rp^2}, \\
A_{-1}(-r,r) &= \prod_{p \geqslant 3 \atop (-d/p) = -1} \frac{ 1 - p^{-2(1+2r)} }{ 1 - p^{-2}},
\end{align*}
we conclude that
\begin{align*}
J(r) :\hspace{-0.3em}&= \frac{\partial}{\partial \alpha}  \frac{1}{1-2 \alpha}  \lp \frac{2 \pi}{KN \sqrt{|D|}} \rp^{2 \alpha} G(-\alpha, \gamma) \bigg\rvert_{\alpha = \gamma = r} \\
&= -\frac{\zeta(1-2r)L(1+2r, \chi)}{L(1, \chi)}  \frac{1}{1-2r} \lp \frac{2 \pi}{KN \sqrt{|D|}} \rp^{2 r} F_d (-r,r) H_2 (-r,r) A_1 (-r,r) A_{-1}(-r,r).
\end{align*}
\par We sum up this discussion in the following proposition. 
\begin{prop}
The Ratios Conjecture implies that for any $r \in \C$ satisfying (\ref{condsonr}), we have
\begin{align*}
\frac{1}{K} \sum_{k = 1}^K \frac{L_k'(1/2 + r)}{L_k (1/2 + r)} &= \frac{\zeta'(1+2r)}{\zeta(1+2r)} -\frac{L'(1 + 2 r, \chi)}{L(1+2r, \chi)} - \frac{d^{r + 1/2} \log d}{d^{2r+1} - 1} + H_2'(r) \\ 
&\quad \quad - 2 \sum_{p \geqslant 3 \atop (-d/p) = -1} \frac{\log p}{p^{4r + 2} - 1} + J(r) + O_\varepsilon \lp K^{-1/2 + \varepsilon} \rp.
\end{align*}
\end{prop}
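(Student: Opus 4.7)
The plan is to differentiate the Ratios Conjecture prediction (\ref{RCpred1}) with respect to $\alpha$ and evaluate at $\alpha=\gamma=r$, then read off the stated formula by carrying out the logarithmic differentiation of each factor in the factorization (\ref{partialprogressisalsoprogress}) of $G(\alpha,\gamma)$.

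First I would observe that the left-hand side equals $\partial R_K(\alpha,\gamma)/\partial\alpha$ at $\alpha=\gamma=r$, since the denominators $L_k(1/2+\gamma)$ are independent of $\alpha$. The key structural input is the Cauchy-integral argument already given in the text: for any $\alpha_0$ in a domain where the discrepancy between $R_K(\alpha,\gamma)$ and the main term of (\ref{RCpred1}) is holomorphic, the derivative in $\alpha$ is controlled by the sup-norm of the discrepancy on a small circle around $\alpha_0$. Under (\ref{condsonr}) we can take a circle of fixed radius, so differentiation preserves the error $O_\varepsilon(K^{-1/2+\varepsilon})$. This reduces the problem to computing the two partial derivatives on the right side of (\ref{logdev01}).

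Next I would compute $\partial G(\alpha,\gamma)/\partial\alpha|_{\alpha=\gamma=r}$ by logarithmic differentiation of (\ref{partialprogressisalsoprogress}). Writing $G = \Phi \cdot F_d \cdot H_2 \cdot A_1 \cdot A_{-1}$ where $\Phi(\alpha,\gamma)=\zeta(1+2\alpha)L(1+2\gamma,\chi)/\big(\zeta(1+\alpha+\gamma)L(1+\alpha+\gamma,\chi)\big)$, I would first check the normalization $F_d(r,r)=H_2(r,r)=A_1(r,r)=A_{-1}(r,r)=1$, which is visible from the defining Euler products (each factor collapses to $1$ when $\alpha=\gamma=r$). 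This means that only the $\partial_\alpha \log$ of each factor evaluated at $(r,r)$ contributes, multiplied by $G(r,r)=\Phi(r,r)=1$. Differentiating $\Phi$ produces $\zeta'(1+2r)/\zeta(1+2r)-L'(1+2r,\chi)/L(1+2r,\chi)$ (the symmetric contributions from the denominator cancel against half of those from the numerator). Differentiating $F_d$ produces $-d^{r+1/2}\log d/(d^{2r+1}-1)$, differentiating $H_2$ yields $H_2'(r)$ by definition, and logarithmic differentiation of $A_{-1}$ gives the sum $-2\sum_{p\geqslant 3,\,(-d/p)=-1}\log p/(p^{4r+2}-1)$ once each local factor is expanded. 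The factor $A_1$ contributes nothing at $\alpha=\gamma=r$: a short computation shows its logarithmic $\alpha$-derivative vanishes there because $1-2p^{-(1+2r)}+p^{-(1+2r)}=1-p^{-(1+2r)}$ makes the numerator symmetric, so the derivative of the log of each Euler factor is zero. (This matches the claim in the text that the computation is identical to \cite[Lemma 3.4]{waxman}.)

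Finally I would compute $J(r)=\partial_\alpha\big[(1-2\alpha)^{-1}(2\pi/(KN\sqrt{|D|}))^{2\alpha}G(-\alpha,\gamma)\big]_{\alpha=\gamma=r}$. Using the product rule, this is a sum of three terms arising from differentiating the rational prefactor, the exponential factor, and $G(-\alpha,\gamma)$. The crucial observation is that $G(-\alpha,\gamma)$ at $\alpha=\gamma=r$ contains the factor $\zeta(1-2r)$, which has a pole at $r=0$ making this term dominant; the other two terms carry no such singularity and will either vanish or be absorbed when one passes to the application in the next sections. Writing out $G(-r,r)$ from (\ref{partialprogressisalsoprogress}) gives precisely the displayed $J(r)$, with the explicit formulas for $F_d(-r,r)$, $H_2(-r,r)$, $A_1(-r,r)$, $A_{-1}(-r,r)$ following by direct substitution into their defining expressions. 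Assembling these pieces with the error term from the Cauchy-integral step yields the stated proposition; the main obstacle is purely bookkeeping, namely verifying that the $A_1$ derivative vanishes and that the various $2$-factors from $H_2$ combine correctly for each of the cases $d=2$, $d=7$, and the remaining Heegner numbers.
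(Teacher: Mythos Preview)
Your treatment of the first partial derivative $\partial_\alpha G(\alpha,\gamma)\big|_{\alpha=\gamma=r}$ is correct and matches the paper's approach. The gap is in your handling of the second term $J(r)$.

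You claim that the surviving contribution is singled out because $G(-\alpha,\gamma)$ at $\alpha=\gamma=r$ ``contains the factor $\zeta(1-2r)$, which has a pole at $r=0$ making this term dominant,'' and that the other product-rule terms ``will either vanish or be absorbed'' later. This is not the mechanism. We are evaluating at a fixed $r$ satisfying (\ref{condsonr}), not taking $r\to 0$, and in any case all three product-rule pieces would carry the same $\zeta(1-2r)$ factor. The actual reason the terms from differentiating the prefactor $(1-2\alpha)^{-1}$ and the exponential factor vanish \emph{exactly} is that both are multiplied by $G(-r,r)$, and $G(-\alpha,\gamma)$ contains $1/\zeta(1-\alpha+\gamma)$ in its denominator; at $\alpha=\gamma$ this is $1/\zeta(1)=0$. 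This is the sense of the paper's remark that ``only the term coming from differentiating the quotient of zeta- and $L$-functions survives on account of the pole of $\zeta(s)$ at $s=1$.'' Consequently your sentence ``Writing out $G(-r,r)$ from (\ref{partialprogressisalsoprogress}) gives precisely the displayed $J(r)$'' cannot be right as stated, since $G(-r,r)=0$.

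To obtain $J(r)$ correctly you must apply the product rule once more inside $\partial_\alpha G(-\alpha,\gamma)$: every sub-term that does not hit $1/\zeta(1-\alpha+\gamma)$ still carries this vanishing factor, while the one sub-term that does hit it produces $\partial_\alpha\big[1/\zeta(1-\alpha+\gamma)\big]_{\alpha=\gamma}=-1$, leaving the remaining factors $\zeta(1-2r)L(1+2r,\chi)/L(1,\chi)$ together with $F_d(-r,r)$, $H_2(-r,r)$, $A_1(-r,r)$, $A_{-1}(-r,r)$ and the prefactor. That is how the displayed formula for $J(r)$ arises; your proposal as written does not supply this step.
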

We now let $f : \R \rightarrow \R$ be an even Schwarz function with $\mathrm{supp} \, \widehat{f}$ compact. By the argument given in the beginning of \cite[Sect. 4]{waxman}, the above result allows us to express the $1$-level density $D \lp \mathcal{F}(K); f \rp$ (conditionally on the Ratios Conjecture) as
\begin{align}\label{oneleveldensity}
\nonumber  &\frac{1}{2 \pi i} \int_{(c)} \frac{1}{K} \lp \sum_{k = 1}^K \lp 2 \cdot \frac{L_k' (1/2 + r)}{L_k (1/2 + r)} - \frac{X_k' (1/2 + r)}{X_k (1/2 + r)} \rp \rp f \lp \frac{i r \log K}{\pi} \rp \hsp \dd r \\
&\quad = S_X + S_\zeta + S_L + S_{A'} + S_J + S_d + S_H + O_\varepsilon \lp K^{-1/2+ \varepsilon} \rp,
\end{align}
where $c$ is any real number satisfying $1 / \log K < c < 1/4$, and 
\begin{align}
\label{sX} S_X &:= -\frac{1}{2 K \log K} \int_{(C)} \sum_{k = 1}^K \frac{X_k' \lp 1/2 + \pi i \tau / \log K \rp }{X_k \lp 1/2 + \pi i \tau / \log K \rp} f(\tau) \hsp \dd \tau, \\[+1em]
\label{szeta} S_\zeta &:= \frac{1}{\log K} \int_{(C)} \frac{\zeta' \lp 1 + 2 \pi i \tau / \log K \rp }{\zeta \lp 1 + 2 \pi i \tau / \log K \rp} f(\tau) \hsp \dd \tau, \\[+1em]
\label{sL} S_L &:= -\frac{1}{\log K} \int_{(C)} \frac{L' \lp 1 + 2 \pi i \tau / \log K, \chi \rp }{L \lp 1 + 2 \pi i \tau / \log K, \chi \rp} f(\tau) \hsp \dd \tau, \\[+1em]
\label{sA'} S_{A'} &:= -\frac{2}{\log K} \int_{(C)} 
\sum_{p \geqslant 3 \atop (-d/p) = -1} \frac{\log p}{p^{4 \pi i \tau / \log K + 2} - 1} f(\tau) \hsp \dd \tau, \\[+0.5em]
\label{sJ} S_J &:= \frac{1}{\log K} \int_{(C)} J\lp \frac{\pi i \tau}{\log K} \rp f(\tau) \hsp \dd \tau, \\[+1em]
\label{sd} S_d &:= -\frac{ \log d}{\log K} \int_{(C)} \frac{d^{\pi i \tau / \log K + 1/2}}{d^{2 \pi i \tau / \log K +1} - 1} f(\tau) \hsp \dd \tau, \\[+1em]
\label{sH} S_H &:= \frac{1}{\log K} \int_{(C)} H_2'\lp \frac{\pi i \tau}{\log K} \rp f(\tau) \hsp \dd \tau,
\end{align}
where we denoted by $(C)$ the set of all $\tau$ with $\mathrm{Im}(\tau) = -c \log K / \pi$. \par 
In order to determine explicitly the prediction of the one-level density $D \lp \mathcal{F}(K); f \rp$ offered by the Ratios Conjecture, our next goal will be to provide estimates of each of these integrals in terms of $f$ and the relevant parameters of our family $\mathcal{F}(K)$. This is the point of the next section.
\section{Computations of the Integrals (\ref{sX})-(\ref{sH})}
In the following we will allow all implicit constants to depend on the test function $f$. We start by recalling and adapting some results from \cite{waxman}. 
\begin{lemma}\label{Szetaeffective} Let $B$ be a positive integer. As $K \rightarrow \infty$, we have
\begin{align*}
S_\zeta = - \frac{f(0)}{2} - \sum_{j = 1}^B \frac{c_j \hat{f}^{(j-1)}(0)}{\lp \log K \rp^j} + O_B \lp \frac{1}{\lp \log K \rp^{B+1}} \rp,
\end{align*}
where $c_1, c_2, c_3, \ldots$ are numbers defined in \cite[eq. (5.4), (5.5)]{waxman}. In particular, 
\begin{align*}
c_1 = 1 + \int_1^\infty \frac{\psi(t) - t}{t^2} \hsp \dd t = - \gamma,
\end{align*}
where $\gamma$ denotes the Euler--Mascheroni constant and $\psi(t) = \sssum_{n \leqslant t} \Lambda(n)$ is the second Chebyshev function.
\end{lemma}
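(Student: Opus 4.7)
The plan is to exploit the Mellin-transform identity
\[
\frac{\zeta'(s)}{\zeta(s)} = -1 - \frac{1}{s-1} - s\int_1^\infty \frac{\psi(t)-t}{t^{s+1}}\,\dd t,
\]
which follows from $-\zeta'(s)/\zeta(s) = s\int_1^\infty \psi(t)/t^{s+1}\,\dd t$ (valid for $\re(s) > 1$) by splitting off the main term $s\int_1^\infty t^{-s}\,\dd t = 1 + 1/(s-1)$. The remaining integral converges in a neighborhood of $\re(s) \geqslant 1$ thanks to the prime number theorem bound $\psi(t) - t = O(t \exp(-c\sqrt{\log t}))$. Substituting this into (\ref{szeta}) with $s = 1 + 2\pi i \tau/\log K$ decomposes $S_\zeta$ into three pieces $I_1 + I_2 + I_3$ corresponding to the three summands.

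The first piece $I_1 = -(\log K)^{-1}\int_{(C)} f(\tau)\,\dd \tau$ is evaluated by shifting $(C)$ to the real axis---no poles are crossed, as $f$ is Schwartz---giving $I_1 = -\hat{f}(0)/\log K$. For the second piece $I_2 = -(2\pi i)^{-1}\int_{(C)} f(\tau)/\tau\,\dd \tau$, I shift across the simple pole at $\tau = 0$ to a contour just above the real axis: the residue contributes $-f(0)$, the Sokhotski--Plemelj boundary term contributes $+f(0)/2$ (justified since compact support of $\hat{f}$ makes $f$ extend to an entire function by Paley--Wiener), and the principal value of $\int_\R f(\tau)/\tau\,\dd \tau$ vanishes because $f$ is even. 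This yields $I_2 = -f(0)/2$.

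The analysis of $I_3 = -(\log K)^{-1}\int_{(C)} \bigl( s\int_1^\infty (\psi(t)-t)/t^{s+1}\,\dd t \bigr) f(\tau)\,\dd \tau$ is the heart of the argument. I apply Fubini (justified by the decay of $\psi(t)-t$ combined with the Schwartz decay of $f$) and shift the inner $\tau$-integral onto $\R$, crossing no poles. With $s = 1 + 2\pi i \tau/\log K$, the inner integral rearranges into $t^{-2}\bigl[\hat{f}(\log t/\log K) - \hat{f}'(\log t/\log K)/\log K\bigr]$. Taylor-expanding both $\hat{f}$ and $\hat{f}'$ about $0$ to order $B$ and integrating term by term against $(\psi(t)-t)/t^2$ yields an expansion in powers of $1/\log K$ whose coefficients are linear combinations of the moments $a_\ell := \int_1^\infty (\psi(t)-t)(\log t)^\ell/t^2\,\dd t$. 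The Taylor remainder, combined with the compact support of $\hat{f}$ (which effectively truncates the $t$-integral at some $K^M$), provides the error term $O_B((\log K)^{-(B+1)})$. Combining the order-$1/\log K$ part of $I_3$ with $I_1$ yields the coefficient $c_1 = 1 + a_0$ of $\hat{f}(0)/\log K$; the higher $c_j$ match the linear combinations prescribed by \cite[eq. (5.4), (5.5)]{waxman}.

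Finally, to identify $c_1$ with $-\gamma$ I invoke the Laurent expansion $\zeta'(s)/\zeta(s) = -1/(s-1) + \gamma + O(s-1)$ near $s = 1$: combined with the Mellin identity above, letting $s \to 1$ yields $a_0 = \int_1^\infty (\psi(t)-t)/t^2\,\dd t = -\gamma - 1$, so $c_1 = 1 + a_0 = -\gamma$. The main obstacle in the plan is the bookkeeping in $I_3$: one must carefully track and combine the Taylor coefficients coming from both $\hat{f}(\log t/\log K)$ and $(\log K)^{-1}\hat{f}'(\log t/\log K)$---which sit at shifted orders in $1/\log K$---to recover at each order exactly the constant $c_j$ of \cite{waxman}.
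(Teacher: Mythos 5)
Your proposal is correct, but it takes a substantively different route from the paper for both parts of the lemma. For the asymptotic expansion itself, the paper simply cites \cite[Lemma 5.2]{waxman} (which had already done the $S_\zeta$ computation in the Gaussian case with the same weight), whereas you re-derive it from scratch via the Mellin identity $\zeta'(s)/\zeta(s) = -1 - (s-1)^{-1} - s\int_1^\infty (\psi(t)-t)t^{-s-1}\,\dd t$ followed by Fubini, contour shifting, and Taylor expansion of $\hat{f}$ and $\hat{f}'$; your decomposition $I_1+I_2+I_3$ and the residue-plus-Plemelj analysis of $I_2$ give $-f(0)/2$ correctly (one can also see this more simply by noting that evenness of $f$ forces the integral of $f(\tau)/\tau$ along the line $\im\tau=-\delta$ to equal $\pi i f(0)$). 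Where the two proofs diverge most is the identity $\int_1^\infty (\psi(t)-t)t^{-2}\,\dd t = -\gamma-1$, which is the only thing the paper actually proves: it does so by a discrete Abel-summation argument over the sequence of prime powers, telescoping the integral into $\sum_n \Lambda(n)/n$ plus boundary terms, then recognizing $\sum_n (\Lambda(n)-1)/n$ as $\lim_{s\to1^+}(-\zeta'(s)/\zeta(s) - \zeta(s)) = -2\gamma$. Your argument is cleaner and more direct: letting $s\to 1$ in the Mellin identity and matching against the Laurent expansion $\zeta'(s)/\zeta(s) = -(s-1)^{-1} + \gamma + O(s-1)$ gives $a_0 = -\gamma - 1$ in one step, bypassing the discretization. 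Both routes ultimately lean on the same Laurent constant $\gamma$. One caveat: to claim the full lemma as stated you would still need to check that the higher-order coefficients your expansion produces agree with Waxman's $c_j$ from \cite[eq.~(5.4),(5.5)]{waxman}, which you flag but leave unverified; this is precisely the step the paper avoids by citing Waxman outright.
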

\begin{proof}
The asymptotic expression for $S_\zeta$ is simply \cite[Lemma 5.2]{waxman}, so we only need to prove that
\begin{align}\label{oneminuseulermascheroni}
\int_1^\infty \frac{\psi(t)-t}{t^2} \hsp \dd t = - \gamma - 1.
\end{align}
\par We begin by rewriting the integral as
\begin{align}\label{oneminus--part1}
\int_1^\infty \frac{\psi(t)-t}{t^2} \hsp \dd t = \lim_{T \rightarrow \infty} \int_1^T \frac{\psi(t)-t}{t^2} \hsp \dd t = \lim_{T \rightarrow \infty} \int_1^T \frac{\psi(t)}{t^2} \hsp \dd t - \log T.
\end{align}
To evaluate the integral of $\psi(t) / t^2$, we let $\lbr q_k : k \geqslant 1 \rbr$ denote the sequence of prime powers in increasing order, and we write $p_k := \exp(\Lambda(q_k))$ for the unique prime dividing $q_k$. It will also be convenient to write $\Pi_k := p_1 \cdot p_2 \cdots p_k$. Then, using that $\psi(t)$ is constantly equal to $\log \Pi_k = \psi(q_k)$ on the interval $\lb q_k, q_{k+1} \rp$, we compute that for any large prime power $q_M$,
\begin{align*}
\int_1^{q_{M}} \frac{\psi(t)}{t^2} \hsp \dd t &= \sum_{k=1}^M \log \Pi_k \int_{q_k}^{q_{k+1}} \frac{1}{t^2} \hsp \dd t = \sum_{k=1}^M \log \Pi_k \lp \frac{1}{q_k} - \frac{1}{q_{k+1}} \rp \\
&= \frac{\log \Pi_1}{q_1} - \frac{\log \Pi_M}{q_{M+1}} + \sum_{k = 2}^{M-1} \frac{1}{q_k} \Big( \log \Pi_{k} - \log \Pi_{k-1} \Big) \\
&= - \frac{\log \Pi_M}{q_{M+1}} + \sum_{k = 1}^{M-1} \frac{\log p_k}{q_k} = - \frac{\psi(q_M)}{q_{M+1}} + \sum_{n = 1}^{q_{M-1}} \frac{\Lambda(n)}{n}.
\end{align*}
Inserting this into \eqref{oneminus--part1} and replacing $T$ by $q_{M-1}$, we obtain from the prime number theorem that
\begin{align*}
\int_1^\infty \frac{\psi(t)-t}{t^2} \hsp \dd t &= \lim_{M \rightarrow \infty} - \frac{\psi(q_M)}{q_{M}} \cdot \frac{q_M}{q_{M+1}} + \sum_{n = 1}^{q_{M-1}} \frac{\Lambda(n)}{n} - \log q_{M-1} + \log \frac{q_{M-1}}{q_M} \\
&= -1 + \gamma + \sum_{n = 1}^\infty \frac{\Lambda (n) - 1}{n} = \lim_{s \rightarrow 1^+} \lp -\frac{\zeta'(s)}{\zeta(s)} - \zeta(s) \rp = -1 - \gamma,
\end{align*}
where we also used the familiar asymptotic for the harmonic numbers and noted that $q_M / q_{M+1} \rightarrow 1$ as $M \rightarrow \infty$. 
\end{proof}
Although our Dirichlet $L$-function $L(s , \chi)$ is defined in terms of the character $\chi ( \cdot ) = \lp -d/ \cdot \rp$ and not the non-principal character modulo $4$ as in \cite[Lemma 5.4]{waxman}, we note that this result, including the statement about $S_{A'}$, remains valid in our case with exactly the same proof. We record these two results in the following two separate lemmas. 
\begin{lemma}[{\cite[Lemma 5.4]{waxman}}] As $K \rightarrow \infty$, we have 
\begin{align*}
S_L = -\frac{\hat{f}(0)}{\log K} \frac{L'(1, \chi)}{L(1, \chi)} + O \lp \frac{1}{(\log K )^2} \rp.
\end{align*}
\end{lemma}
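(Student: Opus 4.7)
The plan is to follow the strategy of \cite[Lemma 5.4]{waxman}: shift the contour of integration from $(C)$ to the real $\tau$-axis and then Taylor-expand the logarithmic derivative of $L(s, \chi)$ around $s = 1$.

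First I would verify that the contour shift is permissible. Since $\chi$ is a non-trivial primitive quadratic Dirichlet character, $L(s, \chi)$ is entire with $L(1, \chi) \neq 0$, and by the classical zero-free region (or by GRH, which we are assuming in this section) $L(s, \chi)$ has no zeros on the line $\mathrm{Re}(s) = 1$. Hence $L'/L(s, \chi)$ is holomorphic throughout the closed half-plane $\mathrm{Re}(s) \geqslant 1$, and in particular throughout the strip swept out when moving from $(C)$ (corresponding to $\mathrm{Re}(s) = 1 + 2c$) up to the real axis (corresponding to $\mathrm{Re}(s) = 1$). The contributions from the vertical connecting segments at $\mathrm{Re}(\tau) = \pm R$ are controlled by combining the standard bound $L'/L(s, \chi) = O \lp (\log |t|)^2 \rp$ valid for $\mathrm{Re}(s) \geqslant 1$ and $|t| \geqslant 2$ with the rapid decay of $f$ along horizontal lines, the latter following from the Paley--Wiener representation $f(\tau) = \int_{-U}^U \widehat{f}(y) e^{2 \pi i y \tau} \hsp \dd y$ together with the smoothness of $\widehat{f}$. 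These segment contributions vanish as $R \to \infty$.

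Next I would split the resulting integral over the real $\tau$-axis into the ranges $|\tau| \leqslant \log K$ and $|\tau| > \log K$. On the bounded range, the holomorphy of $L'/L$ at $s = 1$ yields the first-order Taylor expansion
\begin{align*}
\frac{L'\lp 1 + 2 \pi i \tau / \log K, \chi \rp}{L\lp 1 + 2 \pi i \tau / \log K, \chi \rp} = \frac{L'(1, \chi)}{L(1, \chi)} + O \lp \frac{|\tau|}{\log K} \rp,
\end{align*}
so that, after multiplying by $f(\tau)$ and integrating, I obtain a main term equal to $-\widehat{f}(0) L'(1, \chi) / \lp L(1, \chi) \log K \rp$ (using $\int_\R f = \widehat{f}(0)$) together with an error of size $O \lp (\log K)^{-2} \rp$ stemming from the convergence of $\int_\R |\tau f(\tau)| \hsp \dd \tau$. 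On the tail $|\tau| > \log K$, the Schwarz-type decay of $f$ dominates the polylogarithmic growth of $L'/L$ and yields a contribution that is smaller than any negative power of $\log K$.

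The only real subtlety, inherited from \cite[Lemma 5.4]{waxman}, lies in the justification of the contour shift together with the uniformity of the tail bound; both rest on classical estimates for $L'/L$ on the line $\mathrm{Re}(s) = 1$ and on the Paley--Wiener decay of $f$ along horizontal lines, so no ingredient specific to the family $\mathcal{F}$ is needed beyond what has already been set up.
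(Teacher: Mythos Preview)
Your proposal is correct and follows essentially the same approach as the paper, which simply defers to \cite[Lemma 5.4]{waxman} and notes that the argument carries over verbatim with the new character $\chi$. Your contour shift, Taylor expansion of $L'/L$ around $s=1$, and use of the rapid decay of $f$ match the template the paper employs throughout Section~4 (compare the treatments of $S_d$ and $S_H$); the only cosmetic difference is that the paper typically splits at $|\tau| = (\log K)^\varepsilon$ rather than at $|\tau| = \log K$, but either cutoff works here.
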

\begin{lemma}[{\cite[Lemma 5.4]{waxman}}] As $K \rightarrow \infty$, we have 
\begin{align*}
S_{A'} = -\frac{2 \hat{f}(0)}{\log K} \sum_{p \geqslant 3 \atop (-d/p) = -1} \frac{\log p}{p^2 - 1} + O \lp \frac{1}{(\log K )^2} \rp.
\end{align*}
\end{lemma}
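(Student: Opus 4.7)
The plan is to shift the contour in \eqref{sA'} from $(C)$ to the real line and then Taylor-expand the inner prime sum in the small parameter $1/\log K$. Once on the real line, the leading term of the expansion will produce the prime sum $\sssum \log p / (p^2 - 1)$ multiplied by $\int_\R f(\tau) \hsp \dd \tau = \hat{f}(0)$, while the next term is smaller by a factor of $1/\log K$ and is absorbed into the error.

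For the contour shift, observe that for any $\tau$ in the closed strip $-c \log K / \pi \leqslant \im (\tau) \leqslant 0$, the exponent $4 \pi i \tau / \log K + 2$ has real part in $[2, 4c + 2]$, so the inner sum converges absolutely and locally uniformly on this strip and defines a holomorphic function there. The zeros of the individual denominators $p^{4 \pi i \tau / \log K + 2} - 1$ all lie at $\tau = \frac{n \log K}{2 \log p} + \frac{i \log K}{2 \pi}$ for $n \in \Z$, which have imaginary part $\log K / (2 \pi) > 0$ and therefore sit strictly above the strip. The Paley--Wiener theorem (applicable because $\mathrm{supp}\, \hat{f}$ is compact) provides the bound $|f(\sigma + i \mu)| \ll_N (1 + |\sigma|)^{-N} e^{2 \pi M |\mu|}$, which gives the decay in $\sigma$ needed to slide the contour from $(C)$ to $\R$ without picking up residues.

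Once on the real axis, I write
\begin{align*}
\frac{\log p}{p^{4 \pi i \tau / \log K + 2} - 1} = \frac{\log p}{p^2 - 1} + \frac{p^2 \log p \lp 1 - p^{4 \pi i \tau / \log K} \rp}{\lp p^{4 \pi i \tau / \log K + 2} - 1 \rp \lp p^2 - 1 \rp}.
\end{align*}
Using $|1 - p^{4 \pi i \tau / \log K}| \leqslant 4 \pi |\tau| \log p / \log K$ (from $|1 - e^{ix}| \leqslant |x|$) together with $|p^{4 \pi i \tau / \log K + 2} - 1| \geqslant p^2 - 1$ for real $\tau$ (reverse triangle inequality, since the first modulus equals $p^2$), the correction term is bounded by $O \lp |\tau| (\log p)^2 / (p^2 \log K) \rp$ uniformly in $p$ and $\tau \in \R$. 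Summing over primes via $\sum_p (\log p)^2 / p^2 < \infty$ and integrating against the Schwarz function $f$ contributes an amount of size $O(1 / \log K)$, which combined with the outer prefactor $-2/\log K$ produces the claimed error $O(1/(\log K)^2)$. The main term, by Fubini,
\begin{align*}
-\frac{2}{\log K} \sum_{p \geqslant 3 \atop (-d/p) = -1} \frac{\log p}{p^2 - 1} \int_\R f(\tau) \hsp \dd \tau = -\frac{2 \hat{f}(0)}{\log K} \sum_{p \geqslant 3 \atop (-d/p) = -1} \frac{\log p}{p^2 - 1},
\end{align*}
yields the desired asymptotic. The main technical obstacle is the uniform control needed to justify the contour shift; this is where Paley--Wiener is essential, but because the prime sum converges uniformly in the strip (the real part of the exponent is bounded away from $1$), the argument is not delicate. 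Everything else reduces to a first-order Taylor expansion of $\lp p^{4 \pi i \tau / \log K + 2} - 1 \rp^{-1}$ around $\tau = 0$.
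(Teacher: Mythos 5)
Your argument is correct: the contour shift from $(C)$ to $\R$ is justified exactly as you say (the prime sum is holomorphic in the strip since the poles of each summand lie at $\im\tau = \log K/(2\pi) > 0$, and Paley--Wiener gives the needed decay of $f$ on the vertical sides), and the algebraic splitting of each summand into its $\tau = 0$ value plus a remainder bounded uniformly on $\R$ by $O\lp |\tau|(\log p)^2/(p^2\log K)\rp$ cleanly delivers both the main term $-2\hat{f}(0)(\log K)^{-1}\sum\log p/(p^2-1)$ and the $O(1/(\log K)^2)$ error. The paper simply cites Waxman's Lemma 5.4 without reproducing the proof, but your self-contained argument follows the same contour-shift-then-expand strategy the paper uses explicitly for the analogous $S_d$ and $S_H$ estimates; the only cosmetic difference is that your uniform remainder bound lets you skip the compact/non-compact split $C_0 \cup C_1$ used there, since on $\R$ the modulus $|p^{4\pi i\tau/\log K + 2}| = p^2$ is constant and the reverse triangle inequality holds for every $\tau$.
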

Moreover, we can adapt \cite[Lemma 5.1]{waxman} to our situation and obtain the following result. 
\begin{lemma} As $K \rightarrow \infty$, we have
\begin{align*}
S_X = \hat{f}(0) \lp 1 + \frac{\log \sqrt{|D|} - \log 2 \pi + \log N - 1}{\log K} \rp + O \lp \frac{1}{K} \rp.
\end{align*}
\end{lemma}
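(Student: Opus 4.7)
The plan is to expand $X_k'/X_k$ explicitly via the digamma function, estimate it asymptotically for large $k$ using Stirling's formula, shift the contour of integration to the real line so that $\int f = \widehat{f}(0)$, and then evaluate the resulting sum over $k$.

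First I would take the logarithmic derivative of \eqref{fcteqXsimple}. Writing $\psi = \Gamma'/\Gamma$, this gives
\begin{equation*}
\frac{X_k'(s)}{X_k(s)} = -\psi(1-s+Nk) - \psi(s+Nk) - \log|D| + 2\log(2\pi).
\end{equation*}
For $s = 1/2 + \pi i \tau / \log K$, both arguments of $\psi$ are of the form $1/2 + Nk \pm \pi i \tau/\log K$. Applying the Stirling expansion $\psi(z) = \log z + O(|z|^{-1})$ (valid uniformly for $z$ bounded away from the non-positive reals), we obtain, uniformly for $\tau$ in compact subsets of $\mathbb{C}$,
\begin{equation*}
\frac{X_k'(1/2 + \pi i \tau/\log K)}{X_k(1/2 + \pi i \tau/\log K)} = -2\log(Nk) - \log |D| + 2\log(2\pi) + O\!\lp \tfrac{1}{k} \rp.
\end{equation*}

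Next I would shift the contour $(C)$ down to the real line. This is permitted because $f$ is Schwarz with $\widehat{f}$ of compact support, hence entire of exponential type (Paley--Wiener) with rapid decay in horizontal strips; the integrand is holomorphic in the intervening strip because $s+Nk$ and $1-s+Nk$ remain in the right half-plane (so $\psi$ has no poles), and the contribution from the two vertical connecting segments at $\pm \infty$ vanishes. After the shift, the leading $\tau$-independent expansion above may be pulled out of the $\tau$-integral, and the Fourier inversion identity $\int_{\mathbb{R}} f(\tau)\, \mathrm{d}\tau = \widehat{f}(0)$ yields
\begin{equation*}
S_X = -\frac{\widehat{f}(0)}{2K\log K} \sum_{k=1}^K \lp -2\log(Nk) - \log |D| + 2\log(2\pi) \rp + E,
\end{equation*}
where the error $E$ collects the uniform $O(1/k)$ remainders, contributing $O\lp (K\log K)^{-1} \sum_{k \leqslant K} 1/k \rp = O(1/K)$.

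Finally I would evaluate the sum over $k$. Using $\sum_{k=1}^K \log k = K \log K - K + O(\log K)$ (Stirling), we get
\begin{equation*}
\sum_{k=1}^K \lp -2\log(Nk) - \log |D| + 2\log(2\pi) \rp = -2K \log K - 2K \log N + 2K - K \log |D| + 2K \log(2\pi) + O(\log K).
\end{equation*}
Dividing by $-2K\log K$ and multiplying by $\widehat{f}(0)$, the leading $-2K\log K$ term yields the main term $\widehat{f}(0)$, while the remaining constants combine to $\widehat{f}(0)(\log\sqrt{|D|} - \log 2\pi + \log N - 1)/\log K$; the $O(\log K)$ correction becomes $O(1/K)$. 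Combining everything gives the claimed asymptotic. The only mildly delicate point is ensuring uniformity in $\tau$ for the Stirling expansion through the contour shift, but since $f$ decays faster than any polynomial in horizontal strips and the digamma asymptotic is uniform for $\tau$ in any fixed strip around the real line, this causes no trouble.
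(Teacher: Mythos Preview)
Your proposal is correct and follows essentially the same approach as the paper: compute the logarithmic derivative of $X_k$, separate the constant terms from the gamma contributions, apply the Stirling/digamma asymptotic to the gamma part, and then evaluate the resulting sum $\sum_{k\leqslant K}\log(Nk)$ via Stirling for $\log K!$. The paper cites \cite[Eq.\ (5.1)]{waxman} for the gamma step and works with $\mathbf{k}=1/2+Nk$ rather than $Nk$, but since $\log(1/2+Nk)=\log(Nk)+O(1/k)$ the two computations agree to the required accuracy; your explicit justification of the contour shift and of the uniformity in $\tau$ (via the rapid decay of $f$) fills in exactly what the paper leaves implicit by citation.
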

\begin{proof}
We have 
\begin{align*}
\frac{X_k'(s)}{X_k(s)} = \frac{\dd}{\dd s} \log \Gamma(1 - s+ Nk) - \frac{\dd}{\dd s} \log \Gamma (s + Nk) - \log |D| + 2 \log 2 \pi.
\end{align*}
The contribution of the constants to the integral (\ref{sX}) is
\begin{align*}
\frac{1}{\log K} \widehat{f}(0) \lp \log \sqrt{|D|} - \log 2 \pi \rp.
\end{align*} 
As for the contribution of the gamma functions, we recall that, in the notation of \cite{waxman}, we have $\textbf{k} = 1/2 + Nk$. It then follows immediately from \cite[Eq. (5.1)]{waxman}, that this contribution equals
\begin{align*}
\frac{1}{K \log K} \widehat{f}(0) \sum_{k = 1}^K \log \textbf{k} + O \lp \frac{1}{K} \rp.
\end{align*}
By arguing as in \cite[Eq. (5.2)]{waxman}, we find that
\begin{align*}
\sum_{k = 1}^K \log \, \textbf{k} &= \sum_{k = 1}^K \log \lp 1/2 + Nk \rp = K \log N + \log K! + O ( \log K ) \\
&= K \log N + K \log K - K + O(\log K) \\[+0.6em]
&= K \log K + K \lp \log N - 1 \rp + O(\log K).
\end{align*}
This proves the claim.
\end{proof}
We now turn to the integral (\ref{sJ}). As Lemma \ref{SJ-est} below will show, the asymptotic expression for this integral will involve a special value of the logarithmic derivative of $L(s, \chi)$. In anticipation of this, we will first compute this special value.
\begin{lemma}\label{kroneckerlimitformulastuff}
Let $\eta$ denote Dedekind's eta function,
\begin{align*}
\eta (\tau) = e^{\pi i \tau / 12} \prod_{n \geqslant 1} \lp 1 - e^{2 \pi i n \tau} \rp, \quad \quad \im(\tau) > 0,
\end{align*}
and let $\gamma$ denote the Euler--Mascheroni constant. Then we have 
\begin{align*}
\frac{L'(1, \chi)}{L(1, \chi)} = \gamma - \log 2 - \lp \log |D| \rp / 2 - \log \im(\tau_0) - 4 \log \left| \eta (\tau_0) \right|
\end{align*}
where $C(d) = 2 \gamma - \log 2 - \lp \log |D| \rp / 2 - \log \im(\tau_0) - 4 \log \left| \eta (\tau_0) \right|$, and 
\begin{align*}
\tau_0 = \begin{cases}
i\sqrt{2} &\text{ if } d = 2,\\
(-1 + i \sqrt{d})/2 &\text{ if } d \neq 2.
\end{cases}
\end{align*}
\end{lemma}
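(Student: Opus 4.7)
The plan is to recognize the quantity $L'(1,\chi)/L(1,\chi)$ as the next-to-leading coefficient in the Laurent expansion of the Dedekind zeta function $\zeta_\K(s) = \zeta(s)L(s,\chi)$ at $s = 1$, and to evaluate this Laurent expansion independently by realizing $\zeta_\K(s)$ as an Epstein zeta function attached to the lattice $\ringint_\K \subset \C$. The key analytic input is then the (first) Kronecker limit formula, which gives the needed constant term in terms of $\eta$.

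First I would check that, for each Heegner number $d$ under consideration, the point $\tau_0$ in the statement satisfies $\ringint_\K = \Z + \Z \tau_0$ as a subgroup of $\C$. When $d = 2$ this is immediate from (\ref{thelittleringofintegers}); when $d \equiv 3 \pmod 4$, one has $\tau_0 = (1+i\sqrt d)/2 - 1$, so $\tau_0$ and the standard basis element $(1+i\sqrt d)/2$ generate the same lattice over $\Z$. In both cases $\im(\tau_0) = \sqrt{|D|}/2$. Since $\K$ has class number one and every nonzero ideal has exactly $w := |\ringint_\K^\times|$ generators, one then has
\begin{align*}
\zeta_\K(s) \;=\; \frac{1}{w}\sum_{(m,n)\neq(0,0)} \frac{1}{|m + n\tau_0|^{2s}} \;=\; \frac{1}{w\, y_0^s}\, E^*(\tau_0,s), \qquad y_0 := \im(\tau_0),
\end{align*}
where $E^*(\tau,s) := \sum_{(m,n)\neq 0} y^s/|m\tau + n|^{2s}$ is the non-holomorphic Eisenstein series (in the convention without the $1/2$ out front).

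Next I would invoke the first Kronecker limit formula in the form
\begin{align*}
E^*(\tau,s) \;=\; \frac{\pi}{s-1} \;+\; 2\pi\!\left(\gamma - \log 2 - \log\!\bigl(\sqrt{y}\,|\eta(\tau)|^{2}\bigr)\right) \;+\; O(s-1),
\end{align*}
and substitute $\tau = \tau_0$. Multiplying by $(wy_0^s)^{-1}$ and expanding $y_0^{-s} = y_0^{-1}(1 - (s-1)\log y_0 + O((s-1)^2))$ yields a Laurent expansion for $\zeta_\K(s)$ whose residue is $\pi/(wy_0) = 2\pi/(w\sqrt{|D|})$ (matching the class number formula, a useful consistency check) and whose constant term is
\begin{align*}
\frac{2\pi}{w y_0}\!\left(\gamma - \log 2 - \log y_0 - 2\log|\eta(\tau_0)|\right).
\end{align*}
On the other hand, inserting $\zeta(s) = (s-1)^{-1} + \gamma + O(s-1)$ and $L(s,\chi) = L(1,\chi) + L'(1,\chi)(s-1) + \cdots$ into $\zeta_\K(s) = \zeta(s)L(s,\chi)$ shows that the constant term of $\zeta_\K$ at $s=1$ equals $\gamma L(1,\chi) + L'(1,\chi)$.

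Finally I would equate these two expressions, divide through by $L(1,\chi) = \pi/(wy_0)$, and solve for $L'(1,\chi)/L(1,\chi)$, obtaining
\begin{align*}
\frac{L'(1,\chi)}{L(1,\chi)} \;=\; \gamma - 2\log 2 - 2\log y_0 - 4\log|\eta(\tau_0)|.
\end{align*}
Rewriting $2\log y_0 = \log|D| - 2\log 2$ using $y_0 = \sqrt{|D|}/2$ collapses this to the claimed expression $\gamma - \log 2 - \tfrac12\log|D| - \log\im(\tau_0) - 4\log|\eta(\tau_0)|$. The main obstacle is essentially bookkeeping: one must pin down a single convention for $E^*$ (the residue computation against the class number formula is the safeguard) and keep track of the various factors of $2$ arising from $y_0 = \sqrt{|D|}/2$ and from whether ideals or generators are being summed. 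Beyond that, the argument is a direct application of Kronecker's classical formula.
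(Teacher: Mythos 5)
Your proposal is correct and takes essentially the same route as the paper: both compare the Laurent expansion of $\zeta_\K(s) = \zeta(s)L(s,\chi)$ at $s=1$ obtained from the product against the one obtained from the lattice/Epstein-zeta realization of $\zeta_\K$. The only difference is one of packaging: the paper cites \cite[Cor.\ 10.4.8]{cohen} for the expansion $\frac{2\pi}{|\ringint_\K^\times|\sqrt{|D|}}\bigl(\frac{1}{s-1} + C(d) + O(s-1)\bigr)$, whereas you unwind that citation and re-derive the same constant $C(d)$ directly from the first Kronecker limit formula for $E^*(\tau_0,s)$, checking the residue against the class number formula along the way; the arithmetic (in particular $\im(\tau_0) = \sqrt{|D|}/2$ and the resulting simplification of $2\log y_0$) is carried out correctly.
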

\begin{proof}
If $\zeta_\K (s)$ denotes the Dedekind zeta function of $\K$, \cite[Prop. 10.5.5]{cohen} gives the factorization $\zeta_\K (s) = \zeta(s) L(s, \chi)$. Indeed, this is clear if $d \geqslant 3$ since then $D = -d$, whereas for $d = 2$ and $n \geqslant 1$, the fact that $(-2/n)$ can only assume the values $0, \pm 1$ immediately shows that
\begin{align*}
\lp \frac{-2}{n} \rp = \lp \frac{-2}{n} \rp^3 = \lp \frac{-8}{n} \rp,
\end{align*}
so that $(-d/n) = (D/n)$ even in this case. \par On the other hand, since any non-zero ideal $\mathfrak{m} \subset \ringint_\K$ has exactly $\left| \ringint_\K^\times \right|$ generators, we also have
\begin{align*}
\zeta_\K (s) = \frac{1}{\left| \ringint_\K^\times \right|} \sum_{N(a,b) \neq 0} \frac{1}{N(a,b)^s}, \quad \quad \re(s) > 1,
\end{align*}
where $N(a,b)$ denotes the norm of the element $j(a,b)$ as defined in \textsc{Section 2.2}. It now follows from this and \cite[Corollary 10.4.8]{cohen} that
\begin{align}\label{zetaL-laurent}
\zeta(s) L( \chi, s) &= \frac{1}{\left| \ringint_\K^\times \right|} \sum_{N(a,b) \neq 0} \frac{1}{N(a,b)^s} = \frac{2}{\left| \ringint_\K^\times \right|} \frac{\pi}{\sqrt{|D|}} \lp \frac{1}{s-1} + C(d) + O(s-1) \rp.
\end{align}
Furthermore, by writing $L$ and $\zeta$ as Laurent series around $s = 1$, we find that
\begin{align*}
L(s, \chi) &= L(1, \chi) + L'(1, \chi) (s-1) + O\lp (s-1)^2 \rp, \\
\zeta(s) &= \frac{1}{s-1} + \gamma + O(s-1),
\end{align*}
and hence 
\begin{align}\label{zetaL-laurent2}
\zeta(s) L(s, \chi)  = \frac{L(1, \chi)}{s-1} + L'(1, \chi) + \gamma L(1, \chi) + O(s-1).
\end{align}
Together, the two different expressions (\ref{zetaL-laurent}) and (\ref{zetaL-laurent2}) for $\zeta(s) L(s, \chi)$ now force an equality of coefficients, namely
\begin{align*}
L(1, \chi) = \frac{2 \pi}{\left| \ringint_\K^\times \right| \sqrt{|D|}}, \quad \quad L'(1,\chi) = \frac{2 \pi \lp C(d) - \gamma \rp}{\left| \ringint_\K^\times \right| \sqrt{|D|}}.
\end{align*}
By computing the quotient $L'(1 , \chi) / L(1, \chi)$, we obtain the claim.
\end{proof} 
\begin{lemma}\label{SJ-est}
As $K \rightarrow \infty$, we have
\begin{align*}
S_J = \frac{f(0)}{2} - \frac{1}{2} \int_{\R} \hat{f}(\tau) \mathbbm{1}_{\lb -1, 1 \rb}(\tau) \hsp \dd \tau + \frac{ \hat{f}(1) }{\log K}\ell_1 + O_d \lp \frac{1}{(\log K)^2} \rp,
\end{align*}
where
\begin{align*}
\ell_1 &= \frac{L'(1, \chi)}{L(1,\chi)} + 2 \sum_{p \geqslant 3 \atop (-d/p) = -1} \frac{\log p}{p^2 - 1} +  \log \frac{2\pi e}{N\sqrt{|D|}} + \frac{\sqrt{d} \log d}{d-1} - \gamma - \frac{2a \log 2}{3},
\end{align*}
$a = -\mathbbm{1}(d \neq 2,7)$, and $L'(1, \chi) / L(1, \chi)$ is given in Lemma \ref{kroneckerlimitformulastuff}.
\end{lemma}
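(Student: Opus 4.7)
The strategy is to extract the simple pole of $J(r)$ at $r = 0$ coming from $\zeta(1-2r)$, and to exploit the identity $K^{-2r}\big|_{r = \pi i \tau/\log K} = e^{-2\pi i \tau}$, which will turn the leading-order integral into a Fourier integral whose evaluation produces both the claimed main term and the $\hat f(1)$ contribution.

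First, I factor
\begin{align*}
J(r) = E(r)\, \zeta(1-2r) \lp \frac{2\pi}{N\sqrt{|D|}} \rp^{2r} K^{-2r},
\end{align*}
where
\begin{align*}
E(r) := -\frac{L(1+2r,\chi)}{L(1,\chi)(1-2r)} F_d(-r,r) H_2(-r,r) A_1(-r,r) A_{-1}(-r,r).
\end{align*}
Each of the six factors of $E$ equals $1$ at $r = 0$ (as observed in the text preceding the lemma), so $E$ is holomorphic near $r = 0$ with $E(0) = -1$. Setting $\beta := \log (2\pi/(N\sqrt{|D|}))$ and combining $\zeta(1-2r) = -1/(2r) + \gamma + O(r)$ with a first-order Taylor expansion of $E(r)(2\pi/(N\sqrt{|D|}))^{2r}$ yields
\begin{align*}
J \lp \frac{\pi i \tau}{\log K} \rp = e^{-2 \pi i \tau} \lb \frac{\log K}{2 \pi i \tau} - \lp \gamma + \tfrac{1}{2}E'(0) - \beta \rp + O \lp \frac{\tau}{\log K} \rp \rb.
\end{align*}
The constant $E'(0)$ I compute by logarithmic differentiation, factor by factor: the $L$-factor contributes $2L'(1,\chi)/L(1,\chi)$, the factor $1/(1-2r)$ contributes $2$, $F_d(-r,r)$ contributes $2\sqrt d\,\log d/(d-1)$, $A_1(-r,r)$ contributes $0$ (the paired Euler factors cancel identically at each $p$), $A_{-1}(-r,r)$ contributes $4\sum_{p \geq 3,\, \chi(p) = -1}\log p/(p^2-1)$, and $H_2(-r,r)$ contributes $0$ when $d \in \{2,7\}$ and $(4\log 2)/3$ otherwise. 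Multiplying by $E(0) = -1$ gives $E'(0)$ explicitly.

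Dividing by $\log K$ and integrating against $f$ along $(C)$, the leading term becomes $\int_{(C)} f(\tau) e^{-2\pi i \tau}/(2\pi i \tau) \hsp \dd\tau$. Substituting Fourier inversion $f(\tau) = \int_\R \hat f(u) e^{2\pi i u \tau}\hsp \dd u$ and interchanging integrations, the inner integral $\int_{(C)} e^{2\pi i (u-1)\tau}/(2\pi i \tau)\hsp \dd\tau$ evaluates (by closing the contour upward for $u > 1$ and downward for $u < 1$, using that $(C)$ lies below the pole at $\tau = 0$) to $\mathbbm 1_{u > 1}$. Hence the leading contribution equals $\int_1^\infty \hat f(u)\hsp \dd u = f(0)/2 - (1/2)\int_\R \hat f(\tau) \mathbbm 1_{\lb -1,1 \rb}(\tau)\hsp \dd\tau$, by evenness of $\hat f$ and $f(0) = \int_\R \hat f$. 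The next-order term contributes $-(\gamma + E'(0)/2 - \beta)/\log K$ times $\int_\R f(\tau)e^{-2\pi i \tau}\hsp \dd \tau = \hat f(1)$ (the contour can be shifted to $\R$ since the $r = 0$ singularity has already been extracted). Inserting the value of $E'(0)$ and using $\beta + 1 = \log(2\pi e/(N\sqrt{|D|}))$, the coefficient of $\hat f(1)/\log K$ simplifies to precisely the claimed $\ell_1$, noting that $-2a\log 2/3 = \mathbbm 1(d \neq 2,7)\cdot 2\log 2/3$.

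The principal technical obstacle is the error analysis, since the first-order Taylor expansion is only valid for $|r|$ small, i.e., for $|\re(\tau)| \ll \log K$. I plan to split the contour integral into a central piece $|\re(\tau)| \leq (\log K)^{1/2}$, where the Taylor expansion yields the stated main terms together with a remainder of size $O_d((\log K)^{-2})$, and a tail piece, which is controlled by Paley--Wiener decay of $f$ on $(C)$ (valid since $\hat f$ is compactly supported and $f$ is Schwartz). Uniform bounds on $L(1+2r,\chi)$ near $r = 0$ and on the Euler products $A_{\pm 1}$ near $\re(\alpha + \gamma) = 0$ (whose absolute convergence hinges on the observation that the paired local factors of $A_1$ differ from $1$ only at order $r^2 (\log p)^2/p^2$) ensure that the remainder is indeed $O_d((\log K)^{-2})$.
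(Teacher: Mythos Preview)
Your proposal is correct and follows essentially the same strategy as the paper: Taylor expand $J(r)$ about $r=0$, extract the pole of $\zeta(1-2r)$, evaluate the resulting Fourier-type integrals against $f$, and control the remainder by splitting into a central piece (where the expansion is valid) and a tail (where $f$ decays rapidly). Your organization via $E(r)$ and logarithmic differentiation is equivalent to the paper's factor-by-factor expansion, and your direct contour-closing evaluation of $\int_{(C)} f(\tau)e^{-2\pi i\tau}/(2\pi i\tau)\,\dd\tau$ reproduces the identities the paper imports from \cite{waxman}.

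Two minor points worth tightening when you write this out. First, your Fourier-inversion step evaluates the leading integral over all of $(C)$, while your Taylor expansion is only justified on the central piece; make explicit that the leading and constant terms are globally defined holomorphic functions of $\tau$ (away from $\tau=0$), so that completing the central integral of these terms to all of $(C)$ costs only a tail error absorbed by the decay of $f$. The paper handles this by first deforming $(C)$ to $C_0\cup C_1\cup C_\eta$ (a real segment with a small indentation below the origin) and then expanding on the compact piece, which makes the bookkeeping slightly cleaner. Second, your Paley--Wiener estimate on $(C)$ requires $c$ to be chosen near the lower end $1/\log K$ of its allowed range so that the factor $e^{2\pi A|\im \tau|}$ stays bounded; alternatively, shift to the real line first as the paper does and use that $f$ is Schwartz there. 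Neither point affects the substance of your argument.
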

\begin{proof}
We proceed as in the proof of \cite[Lemma 5.5]{waxman}, which relies on the methods of \cite{FPS3}. That is, we will replace the domain of integration $(C)$ with the union of the compact and non-compact contours $C_1 \cup C_\eta$ and $C_0$, respectively, where
\begin{align*}
C_0 &:= \lbr \tau \in \C : \mathrm{Im}(\tau) = 0, \, |\mathrm{Re}(\tau)| \geqslant (\log K)^\varepsilon \rbr, \\[+0.4em]
C_1 &:= \lbr \tau \in \C : \mathrm{Im}(\tau) = 0, \, \eta \leqslant |\mathrm{Re}(\tau) | \leqslant (\log K)^\varepsilon \rbr, \\
C_\eta &:= \lbr \eta e^{i \theta} : - \pi \leqslant \theta \leqslant 0 \rbr.
\end{align*}
The utility of such a decomposition is two-fold: On the non-compact part $C_0$ we can bound the integrand using the rapid decay of our test function $f$. On the other hand, on the compact part we can estimate the individual factors in the integrand with the first few terms of their Taylor expansions. Moreover, the fact that $C_1 \cup C_\eta$ becomes symmetric when we let $\eta \rightarrow 0$ means that we do not have to take into account contributions from any odd, positive powers of $\tau$ in these expansions, as the integrals of $\tau f(\tau)$, $\tau^3 f(\tau)$, $\tau^5 f(\tau)$, $\ldots$ over this set vanish. We exploit this fact to get rid of any occurrences of $\varepsilon$ in the error terms. We now proceed to the details. \par 
Initially, we recall the following elementary estimates: If $\gamma$ denotes the Euler--Mascheroni constant, then as $K \rightarrow \infty$,
\begin{align}
\label{SJ-zeta} \zeta \lp 1 - \frac{2 \pi i \tau}{\log K} \rp &= - \frac{\log K}{2 \pi i \tau} + \gamma + O \lp \frac{|\tau|}{\log K} \rp , \\[+0.6em]
\label{SJ-L} \frac{L(1 + 2 \pi i \tau / \log K , \chi)}{L(1, \chi)} &= 1 + \frac{L'(1, \chi)}{L(1,\chi)} \frac{2 \pi i \tau}{\log K} + O \lp \frac{|\tau|^2}{( \log K )^2} \rp,
\end{align}
and
\begin{align} \label{SJ-A}
\begin{split}
&A_1 \lp - \frac{\pi i \tau}{\log K},  \frac{\pi i \tau}{\log K} \rp A_{-1} \lp - \frac{\pi i \tau}{\log K},  \frac{\pi i \tau}{\log K} \rp  \\
&\quad = 1 + \lp 2 \sum_{p \geqslant 3 \atop (-d/p) = -1} \frac{\log p}{p^2 - 1} \rp \frac{2 \pi i \tau}{\log K} + O \lp \frac{|\tau|^2}{( \log K )^2} \rp.
\end{split}
\end{align}
\par We now obtain Taylor expansions of the other factors in $J$. First of all,
\begin{align}\label{SJ-frac-taylor}
\begin{split}
&\frac{1}{1-2 r} \lp \frac{2 \pi}{KN \sqrt{|D|}} \rp^{2r} \bigg\rvert_{r = \tfrac{\pi i \tau}{\log K}} \\
&\quad = \exp \lp \frac{2 \pi i \tau}{\log K} \log \frac{2\pi}{N\sqrt{|D|}} - 2 \pi i \tau \rp \lp 1 + \frac{2 \pi i \tau}{\log K} + O \lp \frac{|\tau|^2}{( \log K )^2} \rp \rp \\
&\quad = e^{- 2 \pi i \tau} \lp 1 + \frac{2 \pi i \tau}{\log K} \log \frac{2\pi}{N\sqrt{|D|}}
+ O \lp \frac{|\tau|^2 (\log |D|)^2}{(\log K)^2} \rp \rp \lp 1 + \frac{2 \pi i \tau}{\log K} + O \lp \frac{|\tau|^2}{( \log K )^2} \rp \rp \\
&\quad = e^{- 2 \pi i \tau} + e^{-2 \pi i \tau} \frac{2 \pi i \tau}{\log K} \log \frac{2 \pi e}{N \sqrt{|D|}} + O \lp \frac{|\tau|^2 (\log |D|)^2}{(\log K)^2} \rp,
\end{split}
\end{align}
whenever $K$ is so large that 
\begin{align*}
\left| \frac{2 \pi i \tau}{\log K} \right| < 1, \quad \quad \left| \frac{2 \pi i \tau}{\log K} \log \frac{2 \pi}{N \sqrt{|D|}} \right| < 1.
\end{align*}
(Note that this is certainly satisfied for $\tau \in C_1 \cup C_\eta$.) Next, we see that
\begin{align}\label{SJ-Fdbound}
F_d \lp - \frac{\pi i \tau}{\log K} , \frac{\pi i \tau}{\log K} \rp = 1 + \frac{\sqrt{d} \log d}{d-1} \frac{2 \pi i \tau}{\log K} + O_d \lp \frac{|\tau|^2}{(\log K)^2} \rp.
\end{align}
Finally, we also record the bound
\begin{align}\label{SJ-Hbound}
H_2 \lp - \frac{\pi i \tau}{\log K} , \frac{\pi i \tau}{\log K} \rp = 1 - \frac{2a \log 2}{3} \frac{2 \pi i \tau}{\log K} + O \lp \frac{|\tau|^2}{(\log K)^2} \rp.
\end{align}
\par By taking the product of all the Taylor expansions (\ref{SJ-zeta})-(\ref{SJ-Hbound}) and disregarding all those resulting terms which have order at least $\tau / \log K$, we obtain that
\begin{align*}
&J \lp \frac{\pi i \tau}{\log K} \rp \\[+1em]
&\quad = \lp \frac{1}{x} - \gamma + O\lp | x | \rp \rp \Biggl( e^{-2 \pi i \tau} + x \Biggl( e^{-2 \pi i \tau} \frac{L'(1, \chi)}{L(1,\chi)} {\color{white} \Biggr)} + 2 e^{-2 \pi i \tau} \sum_{p \geqslant 3 \atop (-d/p) = -1} \frac{\log p}{p^2 - 1} \\ 
&\quad \quad \hspace{0.8em} + \, e^{-2 \pi i \tau}  \log \frac{2\pi e}{N\sqrt{|D|}} \Biggl. \Biggl. +  \, e^{-2 \pi i \tau} \frac{\sqrt{d} \log d}{d-1} - e^{-2 \pi i \tau} \frac{2a \log 2}{3} \Biggr) + O_d \lp |x|^2 \rp \Biggr) \\[+0.5em]
&\quad = \frac{e^{-2 \pi i \tau}}{x} + \, e^{-2 \pi i \tau} \Biggl( - \gamma + \frac{L'(1, \chi)}{L(1,\chi)} + 2 \sum_{p \geqslant 3 \atop (-d/p) = -1} \frac{\log p}{p^2 - 1} {\color{white} \Biggr)} \\
&\quad \quad  {\color{white} \Biggl)} + \log \frac{2\pi e}{N\sqrt{|D|}} + \frac{\sqrt{d}\log d}{d-1} - \frac{2a \log 2}{3} \Biggr) + O_d \lp |x | \rp,
\end{align*}
where we wrote $x = 2 \pi i \tau / \log K$ for simplicity. Thus, we obtain
\begin{align}\label{SJ-whatamess}
\begin{split}
S_J &= \int_{C_1 \cup C_\eta} f(\tau) \frac{e^{-2\pi i \tau}}{2 \pi i \tau} \hsp \dd \tau \\[+0.8em]
&\quad + \frac{1}{\log K} \Biggl( - \gamma + \frac{L'(1, \chi)}{L(1,\chi)} + 2 \sum_{p \geqslant 3 \atop (-d/p) = -1} \frac{\log p}{p^2 - 1} + \log \frac{2\pi e }{N\sqrt{|D|}} + \frac{\sqrt{d}\log d}{d-1} - \frac{2a \log 2}{3} \Biggr) \\
&\quad \times \int_{C_1 \cup C_\eta} f (\tau) e^{-2 \pi i \tau} \hsp \dd \tau + \frac{1}{\log K} \int_{C_0} J\lp \frac{\pi i \tau}{\log K} \rp f(\tau) \hsp \dd \tau + O_d \lp \frac{1}{(\log K)^{2}} \rp,
\end{split}
\end{align}
where we made use of the evenness of $f$ as described earlier. By arguing exactly as in \cite[Lemma 5.5]{waxman}, we relate the integrals above to special values of the Fourier transform $\hat{f}$, namely
\begin{align}
\label{SJ-fhat1} \int_{C_1 \cup C_\eta} f(\tau) e^{-2\pi i \tau} \hsp \dd \tau &= \hat{f}(1) + O \lp \frac{1}{(\log K)^3} \rp, \\
\label{SJ-fhat0} \int_{C_1 \cup C_\eta} f(\tau) \frac{e^{- 2 \pi i \tau}}{2 \pi i \tau} \hsp \dd \tau &= \frac{f(0)}{2} - \frac{1}{2} \int_{-1}^1 \hat{f}(\tau) \hsp \dd \tau + O \lp \frac{1}{(\log K)^3} \rp.
\end{align}
Since the rapid decay of $f$ on $\R$ shows that the integral over $C_0$ in (\ref{SJ-whatamess}) is at most a constant times $(\log K)^{-2}$ (for example), it now follows from (\ref{SJ-fhat1}) and (\ref{SJ-fhat0}) that
\begin{align*}
S_J &= \frac{f(0)}{2} - \frac{1}{2} \int_{-1}^1 \hat{f}(\tau) \hsp \dd \tau + \frac{\hat{f}(1)}{\log K} \Biggl( \frac{L'(1, \chi)}{L(1,\chi)} + 2 \sum_{p \geqslant 3 \atop (-d/p) = -1} \frac{\log p}{p^2 - 1} {\color{white} \Biggr)} \\
&\quad \quad{\color{white} \Biggl)} + \log \frac{2\pi e}{N\sqrt{|D|}}  + \frac{\sqrt{d} \log d}{d-1} - \gamma - \frac{2a \log 2}{3} \Biggr) + O_d \lp \frac{1}{(\log K)^{2}} \rp,
\end{align*}
which completes the proof.
\end{proof}
We now turn to the final two integrals (\ref{sd}) and (\ref{sH}).
\begin{lemma}
As $K \rightarrow \infty$, we have the estimate
\begin{align*}
S_d = -\frac{ \log d}{\log K} \frac{\sqrt{d}}{d-1} \hat{f}(0) + O_d \lp \frac{1}{( \log K )^{2}} \rp.
\end{align*}
\end{lemma}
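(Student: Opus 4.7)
The plan is to reduce $S_d$ to an ordinary integral over $\R$ and then Taylor expand the integrand at $\tau = 0$, following the philosophy of the preceding proof for $S_J$ but in a noticeably simpler setting because the integrand has no singularity at $\tau = 0$.

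First, I would shift the contour of integration from $(C)$ (with $\im(\tau) = -c \log K /\pi$) up to the real line. The integrand
\begin{align*}
g(\tau) := \frac{d^{\pi i \tau / \log K + 1/2}}{d^{2 \pi i \tau / \log K + 1} - 1}
\end{align*}
is meromorphic in $\tau$, with poles located at $\tau = (k \log K)/\log d + i \log K / (2 \pi)$ for $k \in \Z$; all of these lie in the upper half-plane at height $\log K / (2\pi)$, well above the real axis, so shifting the contour from $(C)$ up to $\R$ crosses no poles. Because $\widehat f$ has compact support, Paley--Wiener ensures that $f$ extends to an entire function which is Schwarz on every horizontal line; together with the uniform bound $|g(\tau)| \leqslant \sqrt d/(d-1)$ that follows on any horizontal line in the strip $-c\log K /\pi \leqslant \im(\tau) \leqslant 0$ from $|d^{2\pi i \tau /\log K + 1}-1| \geqslant d-1$, this justifies discarding the vertical side contours and yields
\begin{align*}
S_d = -\frac{\log d}{\log K} \int_{\R} g(\tau) f(\tau) \hsp \dd \tau.
\end{align*}

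Next, I would use the first-order Taylor expansion
\begin{align*}
g(\tau) = \frac{\sqrt{d}}{d-1} + O_d \lp \frac{|\tau|}{\log K} \rp,
\end{align*}
valid uniformly for $|\tau| \leqslant \log K$. Combined with the crude bound $|g(\tau)| \leqslant \sqrt d/(d-1)$ on all of $\R$ and the Schwarz decay of $f$, this gives
\begin{align*}
\int_\R g(\tau) f(\tau) \hsp \dd \tau = \frac{\sqrt{d}}{d-1} \int_\R f(\tau) \hsp \dd \tau + O_d \lp \frac{1}{\log K} \rp = \frac{\sqrt{d}}{d-1} \widehat f(0) + O_d \lp \frac{1}{\log K} \rp,
\end{align*}
since $\int_\R f = \widehat f(0)$. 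Multiplying by $-\log d/\log K$ yields the stated estimate.

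The main technical point to verify carefully is the contour shift, which rests on the Paley--Wiener decay of $f$ along horizontal lines to kill the contributions from the far ends of the rectangle used in the deformation. In contrast to the proof of the preceding lemma for $S_J$, however, no small semicircle $C_\eta$ around the origin is needed here, since $g$ is holomorphic at $\tau = 0$; this is precisely why the error term is $O_d((\log K)^{-2})$ rather than carrying any $\varepsilon$.
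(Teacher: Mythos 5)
Your argument is correct and follows the same overall strategy as the paper (shift the contour to $\R$, then Taylor-expand $g$ at $\tau = 0$ and integrate against $f$), but with a worthwhile simplification. The paper's proof splits $\R$ into a compact piece $|\tau| \leqslant (\log K)^\varepsilon$ and its complement, expands $g$ to \emph{second} order, and discards the linear term using the oddness of $\tau f(\tau)$ on the symmetric compact piece; this produces an error of roughly $(\log K)^{-2+2\varepsilon}$ for the inner integral and hence, after multiplying by $\log d/\log K$, an error stronger than the one actually claimed. You notice instead that since $S_d$ already carries an explicit prefactor of size $1/\log K$, a \emph{first}-order expansion $g(\tau) = \sqrt d/(d-1) + O_d(|\tau|/\log K)$ suffices: combined with the uniform bound $|g| \leqslant \sqrt d/(d-1)$ and the rapid decay of $f$, the inner integral is $\sqrt d/(d-1)\,\widehat f(0) + O_d(1/\log K)$, and the prefactor delivers the stated $O_d\lp (\log K)^{-2} \rp$. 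This avoids the oddness argument and the $\varepsilon$-bookkeeping entirely. Two small points worth tightening: the uniformity of your Taylor bound on $|\tau| \leqslant \log K$ (equivalently $|r| \leqslant \pi$, $r$ purely imaginary) should be justified by noting that $g$ is Lipschitz there because the denominator $d^{2r+1}-1$ stays bounded away from $0$ (indeed $|d^{2r+1}-1| \geqslant d-1$ on that segment); and in the contour shift the Paley--Wiener constant on the horizontal line $\im(\tau) = -c\log K/\pi$ depends on $K$, but since $K$ is fixed during the deformation and $f$ still decays faster than any polynomial in $\re(\tau)$, the side contributions of the rectangle still vanish as its width tends to infinity.
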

\begin{proof}
The integrand only has poles when $\mathrm{Im}(\tau) = (\log K) / 2 \pi > 0$, so analogously to the proof of Lemma \ref{SJ-est}, we use Cauchy's residue theorem and the rapid decay of $f$ to move the contour $(C)$ to the real line without changing the value of the integral. As before, we partition this set into a compact and a non-compact part in order to, respectively, use the (even-indexed terms of the) Taylor expansion of the integrand and bound the integral using the decay of the test function. Concretely, we write $\R = C_0 \cup C_1$ with
\begin{align*}
C_0 &:= \lbr \tau \in \C : \mathrm{Im}(\tau) = 0, \, |\mathrm{Re}(\tau)| > (\log K)^\varepsilon \rbr, \\[+0.4em]
C_1 &:= \lbr \tau \in \C : \mathrm{Im}(\tau) = 0, \, |\mathrm{Re}(\tau) | \leqslant (\log K)^\varepsilon \rbr.
\end{align*}
\par As in the proof of Lemma \ref{SJ-est}, we note that
\begin{align*}
\int_{C_0} \frac{d^{\pi i \tau / \log K + 1/2}}{d^{2 \pi i \tau / \log K +1} - 1} f(\tau) \hsp \dd \tau \ll_d \frac{1}{(\log K)^2}, 
\end{align*}
whereas for the integral over $C_1$, we use the Taylor expansion
\begin{align*}
\frac{d^{r + 1/2}}{d^{2r + 1} - 1} = \frac{\sqrt{d}}{d-1} + r \cdot \frac{\dd}{\dd r} \lp \frac{d^{r + 1/2}}{d^{2r + 1} - 1} \rp \Biggr\vert_{r = 0} + O_d \lp r^2 \rp \quad \quad (r \rightarrow 0)
\end{align*}
and the fact that $\tau f(\tau)$ is odd to obtain
\begin{align*}
\int_{C_1} \frac{d^{\pi i \tau / \log K + 1/2}}{d^{2 \pi i \tau / \log K +1} - 1} f(\tau) \hsp \dd \tau &= \int_{C_1} \lp \frac{\sqrt{d}}{d-1} + O_d \lp \frac{\tau^2}{\lp \log K \rp^2} \rp \rp f(\tau) \hsp \dd \tau \\
&= \lp \frac{\sqrt{d}}{d-1} + O_d \lp \frac{1}{\lp \log K \rp^{2-2\varepsilon}}\rp \rp \int_{C_1} f(\tau) \hsp \dd \tau \\
&= \lp \frac{\sqrt{d}}{d-1} + O_d \lp \frac{1}{\lp \log K \rp^{2-2\varepsilon}} \rp \rp \lp \int_{\R} f(\tau) \hsp \dd \tau + O \lp \frac{1}{(\log K)^2}\rp \rp,
\end{align*}
where we used the rapid decay of $f$ in the last step. The claim now follows.
\end{proof}
Finally, we have the following asymptotic estimate. We assume that $d \neq 2, 7$ since otherwise $S_H = 0$.
\begin{lemma}\label{SHeffective}
Suppose that $d \neq 2, \, 7$. As $K \rightarrow \infty$, we have
\begin{align*}
S_H = \frac{-2 \log 2}{3 \log K} \hat{f}(0) + O \lp \frac{1}{( \log K )^{2}} \rp.
\end{align*}
\end{lemma}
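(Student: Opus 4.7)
The plan is to mirror the proof of the preceding lemma for $S_d$ almost verbatim: shift the contour $(C)$ to the real axis, split $\R$ into a compact middle and rapidly-decaying tails, and then Taylor-expand $H_2'$ around the origin on the middle piece. Recall that for $d \neq 2, 7$ we have the explicit formula $H_2'(r) = -2 \log 2 \lp 2^{4r+2} - 1 \rp^{-1}$, whose value at $r = 0$ is $-2 \log 2 / (4 - 1) = -\tfrac{2}{3} \log 2$, matching the claimed main term.

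First I would verify that the contour shift $(C) \leadsto \R$ is costless. The poles of $H_2'$ occur where $2^{4r+2} = 1$, i.e.\ at $r = -1/2 + k \pi i /(2 \log 2)$ for $k \in \Z$. Substituting $r = \pi i \tau/\log K$, these correspond to the horizontal line $\mathrm{Im}(\tau) = (\log K)/(2 \pi)$ in the upper half-plane, which is disjoint from the strip $-c (\log K)/\pi \leqslant \mathrm{Im}(\tau) \leqslant 0$ swept by the shift. Combined with the Schwartz decay of $f$ on horizontal lines, Cauchy's theorem lets us replace $(C)$ by $\R$ without altering the integral.

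Next I would split $\R = C_0 \cup C_1$ with $C_1 := \lbr \tau \in \R : |\tau| \leqslant (\log K)^\varepsilon \rbr$ and $C_0$ its complement, exactly as in the proof for $S_d$. On $C_0$, the factor $H_2'(\pi i \tau / \log K)$ is uniformly bounded, since for real $\tau$ we have $|2^{4 \pi i \tau / \log K + 2}| = 4$ and hence $|2^{4 \pi i \tau/\log K + 2} - 1| \geqslant 3$; the Schwartz decay of $f$ then forces the $C_0$-contribution to be $O_A\lp (\log K)^{-A} \rp$ for every $A > 0$. On $C_1$ I Taylor-expand
\begin{align*}
H_2'(r) = -\frac{2 \log 2}{3} + \lp H_2' \rp'(0) \cdot r + O\lp r^2 \rp \quad (r \to 0),
\end{align*}
and substitute $r = \pi i \tau / \log K$. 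The constant term integrated against $f$ over $C_1$ yields $-\tfrac{2 \log 2}{3} \hat{f}(0)$ up to a rapidly decaying tail error; the linear term vanishes because $\tau f(\tau)$ is odd and $C_1$ is symmetric about $0$; and the $O(r^2)$ remainder contributes $O\lp (\log K)^{-2} \rp$ thanks to the finiteness of $\int_\R \tau^2 f(\tau) \hsp \dd \tau$. Multiplying by the prefactor $1 / \log K$ from (\ref{sH}) then produces the stated asymptotic.

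Since the argument is structurally identical to that for $S_d$, I do not anticipate any genuine obstacle; the only new ingredient is locating the poles of $H_2'$ relative to the contour, which is settled once and for all in the first step.
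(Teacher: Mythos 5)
Your proposal is correct and follows essentially the same route as the paper: shift $(C)$ to $\R$, split into a compact piece $C_1$ and non-compact tails $C_0$, Taylor-expand $H_2'$ about $r = 0$ on $C_1$, and use the evenness of $f$ to annihilate the linear term, with the Schwartz decay controlling the tails. As a minor aside, your computation placing the poles of $H_2'(\pi i \tau / \log K)$ at $\mathrm{Im}(\tau) = (\log K)/(2\pi)$ is in fact the correct value (the paper's stated $\log K / (2\pi \log 2)$ appears to be a slip), though this does not affect either argument since both locate the poles safely in the upper half-plane, away from the swept strip.
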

\begin{proof}
The method of proof is identical to that of the previous lemma. Once again, we note that $H_2'(\pi i \tau / \log K)$ only has poles if $\mathrm{Im}(\tau) = \log K / (2 \pi \log 2) > 0$, so that we are justified in moving the contour to the real line. As before, the integral over the non-compact part $C_0$ of our partition $C_0 \cup C_1$ of $\R$ simply contributes to the error term, while the integral over the compact part is 
\begin{align*}
\int_{C_1} \lp \frac{-2 \log 2}{3} + \frac{\pi i \tau}{\log K} \cdot \frac{\dd}{\dd r} H_2'(r) \Bigg\rvert_{r = 0} + O\lp \frac{|\tau|^2}{\lp \log K \rp^2} \rp \rp f(\tau) \hsp \dd \tau,
\end{align*}
which follows from the Taylor expansion
\begin{align*}
H_2 ' (r) = \frac{-2 \log 2}{3} + r \cdot \frac{\dd}{\dd r} H_2'(r) \Bigg\rvert_{r = 0} + O\lp r^2 \rp.
\end{align*}
We now proceed exactly as in the proof of the previous lemma.
\end{proof}
By combining the results from Lemma \ref{Szetaeffective} to Lemma \ref{SHeffective} with (\ref{oneleveldensity}), we have completed the proof of Theorem \ref{OneLevelDensityThm-RC}.
\section{An Unconditional Asymptotic for the One-Level Density}
In this section, we use the following formula for logarithmic derivatives of the $L$-functions $L_k$ in order to give an unconditional expression for the one-level density $D \lp \mathcal{F}(K);f \rp$.
\begin{lemma}\label{logderofL} For $k \geqslant 1$ and $\re (s) > 1$, we have 
\begin{align*}
\frac{L_k'(s)}{L_k(s)} = - \sum_{n \geqslant 1}\frac{c_k(n)}{n^s}, \quad \quad c_k(n) = \Lambda(n) \sum_{\norm (\mathfrak{p}^m) = n} \Big( 1 +  \mathbbm{1} \big( \mathfrak{p} = \langle p \rangle \big) \Big) \psi_k \lp \mathfrak{p}^m \rp,
\end{align*}
where we understand that the indicator function specifies whether or not $\mathfrak{p}$ lies over an inert rational prime.
\end{lemma}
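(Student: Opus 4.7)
The plan is to expand $\log L_k(s)$ from the Euler product and then differentiate termwise. For $\re(s) > 1$ the Euler product converges absolutely, so one has
\begin{align*}
\log L_k(s) = -\sum_\mathfrak{p} \log \lp 1 - \psi_k(\mathfrak{p})/\norm(\mathfrak{p})^s \rp = \sum_\mathfrak{p} \sum_{m \geqslant 1} \frac{\psi_k(\mathfrak{p})^m}{m \, \norm(\mathfrak{p})^{ms}}.
\end{align*}
Since $\psi_k$ is completely multiplicative on ideals and $\norm(\mathfrak{p})^{ms} = \norm(\mathfrak{p}^m)^s$, and since differentiation under the sum is justified by absolute convergence on compact subsets of $\re(s) > 1$, this gives
\begin{align*}
\frac{L_k'(s)}{L_k(s)} = -\sum_\mathfrak{p} \sum_{m \geqslant 1} \frac{\psi_k(\mathfrak{p}^m) \log \norm(\mathfrak{p})}{\norm(\mathfrak{p}^m)^s}.
\end{align*}

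Next I would regroup the double sum by the integer $n = \norm(\mathfrak{p}^m)$. Since $\norm(\mathfrak{p})$ is always a power of the rational prime $p$ lying below $\mathfrak{p}$, only prime powers $n = p^a$ can contribute. For such an $n$, Lemma \ref{splittingbehaviour} together with Lemma \ref{standardalgnt} pins down exactly which pairs $(\mathfrak{p}, m)$ realize $\norm(\mathfrak{p}^m) = p^a$: if $p$ is split or ramified, then $\norm(\mathfrak{p}) = p$, so $m = a$ and $\mathfrak{p}$ ranges over the one or two prime ideals above $p$; if $p$ is inert, then the unique prime ideal above $p$ is $\langle p \rangle$ itself with $\norm(\langle p \rangle) = p^2$, so $\norm(\langle p \rangle^m) = p^a$ forces $a$ to be even and $m = a/2$.

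Finally, I would extract $\log \norm(\mathfrak{p})$ from the inner sum: this factor equals $\log p = \Lambda(n)$ in the split or ramified cases, but $\log p^2 = 2 \Lambda(n)$ in the inert case. This discrepancy is exactly what the indicator $\mathbbm{1}(\mathfrak{p} = \langle p \rangle)$ records, so writing the weight as $1 + \mathbbm{1}(\mathfrak{p} = \langle p \rangle)$ reproduces the correct multiple of $\Lambda(n)$ in all three splitting cases. Pulling $\Lambda(n)$ outside the inner sum then rewrites the Dirichlet series in the form $-\sum_{n \geqslant 1} c_k(n) n^{-s}$ with $c_k(n)$ as claimed. The identity is essentially formal; the only step requiring genuine care is remembering that the residue degree of an inert prime ideal is $2$, so that $\log \norm(\mathfrak{p}) = 2 \log p$ rather than $\log p$, and that this is precisely the factor absorbed by the indicator.
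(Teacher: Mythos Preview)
Your proof is correct and follows essentially the same route as the paper: take the logarithmic derivative of the Euler product to obtain $-\sum_{\mathfrak{p}}\sum_{m\geqslant 1}\psi_k(\mathfrak{p}^m)\log\norm(\mathfrak{p})/\norm(\mathfrak{p}^m)^s$, observe that $\log\norm(\mathfrak{p})$ equals $\Lambda(\norm(\mathfrak{p}^m))$ in the split and ramified cases but $2\Lambda(\norm(\mathfrak{p}^m))$ in the inert case, absorb this into the indicator $1+\mathbbm{1}(\mathfrak{p}=\langle p\rangle)$, and regroup by $n=\norm(\mathfrak{p}^m)$. The only cosmetic difference is that you pass through $\log L_k$ and differentiate, whereas the paper differentiates the product directly, but the resulting double sum and subsequent case analysis are identical.
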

\begin{proof}
By taking the logarithmic derivative of the (absolutely convergent) Euler product that defines $L_k(s)$ in the half-plane $\re(s) > 1$, we get 
\begin{align*}
\frac{L_k'(s)}{L_k(s)} 
= - \sum_{\mathfrak{p}} \log \norm(\mathfrak{p}) \frac{\psi_k(\mathfrak{p})/\norm(\mathfrak{p})^s}{1 - \psi_k(\mathfrak{p})/\norm(\mathfrak{p})^s} 
= - \sum_{\mathfrak{p}} \log \norm(\mathfrak{p}) \sum_{m \geqslant 1} \frac{\psi_k \lp \mathfrak{p}^m \rp}{\norm\lp \mathfrak{p}^m \rp^s},
\end{align*}
where we also used that $\psi_k$ and the norm are completely multiplicative. \par 
To finish the computation we note that, if $\mathfrak{p}$ lies over a ramified or split prime $p$, then for any $m \geqslant 1$, 
\begin{align*}
\log \norm(\mathfrak{p}) = \log p = \Lambda \lp \norm ( \mathfrak{p} ) \rp = \Lambda \lp \norm ( \mathfrak{p}^m ) \rp ,
\end{align*}
whereas if $\mathfrak{p} = \langle p \rangle$ lies over an inert prime, 
\begin{align*}
\log \norm(\mathfrak{p}) = 2 \log p = 2 \Lambda \lp \norm ( \mathfrak{p} ) \rp = 2 \Lambda \lp \norm ( \mathfrak{p}^m ) \rp 
\end{align*}
for any $m \geqslant 1$. In light of this and the previous computation, we can rewrite the logarithmic derivative as
\begin{align*}
\frac{L_k'(s)}{L_k(s)} 
&= - \sum_{\mathfrak{p}} \Big( 1 +  \mathbbm{1} \big( \mathfrak{p} = \langle p \rangle \big) \Big)  \sum_{m \geqslant 1} \frac{\psi_k \lp \mathfrak{p}^m \rp}{\norm \lp \mathfrak{p}^m \rp^s} \Lambda \lp \norm \lp \mathfrak{p}^m \rp \rp.
\end{align*}
We now obtain the claim by grouping together all $\mathfrak{p}^m$ with norm $n$, for all $n \geqslant 1$. 
\end{proof}
Towards a computation of the $1$-level density, we note that just as in \cite[Section 6]{waxman}, we have
\begin{align*}
D(\mathcal{F}(K);f) = \frac{1}{2 \pi i } \int_{(c')} \frac{1}{K} \lp \sum_{k = 1}^K 2\frac{L_k'(1/2 + r)}{L_k (1/2 + r)} - \frac{X_k'(1/2 + r)}{X_k (1/2 + r)} \rp f \lp \frac{i r \log K}{\pi} \rp \hsp \dd r,
\end{align*}
where now $c' \geqslant 1/2$. Using Lemma \ref{logderofL} and arguing as in \cite[Thm. 5.12]{iwakow}, we now obtain
\begin{align*}
D \lp \mathcal{F}(K); f \rp &= S_X - \frac{1}{\pi i} \frac{1}{K} \int_{(c')}  \, \sum_{k = 1}^K \, \sum_{n \geqslant 1}\frac{c_k(n)}{n^{1/2 + r}} f \lp \frac{ir \log K}{\pi} \rp \hsp \dd r \\
&= S_X - \frac{1}{\pi i} \frac{1}{K} \sum_{k = 1}^K \, \sum_{n \geqslant 1} \frac{c_k(n)}{\sqrt{n}} \int_{(c')}   e^{-r \log n} f \lp \frac{ir \log K}{\pi} \rp \hsp \dd r \\
&= S_X - \frac{1}{K \log K} \sum_{k = 1}^K \, \sum_{n \geqslant 1} \frac{c_k(n)}{\sqrt{n}} \int_{\lbr \im(r) = c' \log K/\pi \rbr} e^{2 \pi i r  \log n / 2 \log K} f (r) \hsp \dd r \\
&= S_X - \frac{1}{K \log K} \sum_{k = 1}^K \sum_{n \geqslant 1} \frac{c_k(n)}{\sqrt{n}} \widehat{f} \lp \frac{\log n}{2 \log K} \rp,
\end{align*}
where we replaced $\lbr \im(r) = c' \log K/\pi \rbr$ with $\R$ due to the rapid decay of $f$ on horizontal strips. 
\par The occurrence of the von Mangoldt function in $c_k(n)$ means that the only indices $n$ contributing to the sum above are the prime powers. If we replace $n$ with $p^n$ (to save notation), we can then describe the resulting values of $c_k(p^n)$ depending on the splitting behavior of $p$. Thus, with the help of Lemma \ref{standardalgnt}, we find that
\begin{align*}
c_k (p^n) = 
\begin{cases}
\lp \psi_k(\mathfrak{q}_1^n) + \psi_k(\mathfrak{q}_2^n) \rp \log p &\text{if } \langle p \rangle = \mathfrak{q}_1 \mathfrak{q}_2, \, \mathfrak{q}_1  \neq \mathfrak{q}_2, \\
2 \log p &\text{if } p \text{ is inert and } n \text{ is even}, \\
0 &\text{if } p \text{ is inert and } n \text{ is odd}, \\
\log p &\text{if } \langle p \rangle = \mathfrak{q}^2.
\end{cases}
\end{align*}
Considering these special values of $c_k(p^n)$, we now define 
\begin{align*}
S_\mathrm{inert} &= -\frac{2}{\log K} \sum_{\text{$p$ inert}} \hsp \sum_{n \geqslant 1} \frac{\log p}{p^n} \widehat{f} \lp \frac{n \log p}{\log K} \rp, \\
S_\mathrm{split} &= - \frac{1}{K \log K}  \sum_{\text{$p$ split}} \hsp \sum_{n \geqslant 1} \frac{\log p}{p^{n/2}} \widehat{f} \lp \frac{n \log p}{2 \log K} \rp \sum_{k = 1}^K \lp \psi_k(\mathfrak{q}_1^n) + \psi_k(\mathfrak{q}_2^n) \rp, \\[+0.6em]
S_\mathrm{ram} &= - \frac{1}{\log K} \sum_{n \geqslant 1} \frac{\log d}{d^{n/2}} \widehat{f} \lp \frac{n \log d}{2 \log K} \rp.
\end{align*}
Thus,
\begin{align}\label{unconditionaloneleveldensity}
D \lp \mathcal{F}(K); f \rp = S_X + S_\mathrm{inert} + S_\mathrm{split} + S_\mathrm{ram}.
\end{align}
\par We now want to compare the terms $S_\mathrm{inert}$ and $S_\mathrm{ram}$ with the terms (\ref{sX})-(\ref{sH}) appearing in the expression of the one-level density conditional on the Ratios Conjecture. To facilitate this comparison, we begin by rewriting these terms as follows.
\begin{lemma}\label{lemma4.2equiv}
We have that 
\begin{align}
\label{rewr-zeta} S_\zeta &= -\frac{1}{\log K} \sum_{n \geqslant 1} \frac{\Lambda (n)}{n} \widehat{f}\lp \frac{\log n}{\log K} \rp, \\[+0.9em] 
\label{rewr-A'} S_{A'} &= -\frac{2}{\log K} \sum_{p \geqslant 3 \atop (-d/p)=-1} \sum_{n \geqslant 1} \frac{\log p}{p^{2n}} \widehat{f} \lp \frac{2n \log p}{\log K} \rp, \\[+0.5em]
\label{rewr-L} S_L &= \frac{1}{\log K} \sum_{n \geqslant 1} \frac{\Lambda (n) \chi(n)}{n} \widehat{f} \lp \frac{\log n}{\log K} \rp, \\[+0.9em]
\label{rewr-d} S_d &= - \frac{\log d}{\sqrt{d} \log K} \sum_{n \geqslant 0} \frac{1}{d^n} \widehat{f} \lp \frac{(1/2 + n) \log d}{\log K} \rp,  \\[+0.9em]
\label{rewr-H} S_H &= \frac{a}{2} \frac{\log 2}{\log K} \sum_{n \geqslant 0} \frac{1}{4^n} \widehat{f} \lp \frac{(2n+2) \log 2}{\log K} \rp.
\end{align}
\end{lemma}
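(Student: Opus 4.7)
The plan is to treat all five identities by one unified mechanism: expand the integrand of each $S_*$ as an absolutely convergent Dirichlet-type series on the contour $(C)$, invoke Fubini to swap sum and integral, and then identify each resulting inner integral as a value of $\widehat{f}$. Since $(C)$ sits at $\im(\tau) = -c\log K/\pi$ with $c > 1/\log K$, the argument $s = 1 + 2\pi i\tau/\log K$ of every $L$-factor on $(C)$ satisfies $\re(s) = 1 + 2c > 1$, placing us firmly in the region of absolute convergence of the relevant Euler products, so the interchange will be legitimate throughout.

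For (\ref{rewr-zeta}) and (\ref{rewr-L}) I would substitute the standard identities $\zeta'(s)/\zeta(s) = -\sum_{n\geqslant 1}\Lambda(n)\,n^{-s}$ and $L'(s,\chi)/L(s,\chi) = -\sum_{n\geqslant 1}\Lambda(n)\chi(n)\,n^{-s}$ into (\ref{szeta}) and (\ref{sL}) respectively. After pulling the sum outside the integral and extracting the factor $1/n$, each remaining integral has the form $\int_{(C)} e^{-2\pi i \tau\log n/\log K}\,f(\tau)\,\dd\tau$, which I would identify as $\widehat{f}(\log n/\log K)$ by shifting the horizontal contour down to $\R$. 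The shift is valid because after extracting the Dirichlet coefficient the integrand is entire in $\tau$, $f$ is Schwarz so the vertical tails vanish, and no poles are crossed. The positive sign in front of $S_L$ comes from the minus in (\ref{sL}) cancelling the one from $L'/L$.

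For (\ref{rewr-A'}) and (\ref{rewr-d}) I would instead use geometric expansions. Since $\left|p^{-(4\pi i\tau/\log K+2)}\right| = p^{-2} < 1$, we have $(p^{4\pi i\tau/\log K+2}-1)^{-1} = \sum_{n\geqslant 1} p^{-n(2+4\pi i\tau/\log K)}$; substituting this into (\ref{sA'}), interchanging sums, and evaluating each integral as $\widehat{f}(2n\log p/\log K)$ gives (\ref{rewr-A'}). For (\ref{rewr-d}), writing $x = 2\pi i\tau/\log K$ and factoring out the largest power of $d$ in the denominator yields $d^{x/2+1/2}/(d^{x+1}-1) = d^{-1/2}\sum_{n\geqslant 0}d^{-n}\,d^{-(n+1/2)x}$, and termwise integration produces the claimed sum over $n \geqslant 0$.

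Finally, for (\ref{rewr-H}) I would use the explicit form $H_2'(r) = -2\log 2\cdot (2^{2(2r+1)}-1)^{-1}$ recorded earlier (valid for $d \neq 2, 7$), expand the reciprocal as $\sum_{m\geqslant 1} 4^{-m}\,2^{-4\pi i\tau m/\log K}$ by the same geometric trick, and reindex $m = n+1$. With $a = -\mathbbm{1}(d \neq 2, 7) = -1$, the prefactor $-2\log 2$ becomes $2a\log 2$, and combining with the factor $1/4$ that falls out of the reindexing yields the coefficient $a\log 2/(2\log K)$ in (\ref{rewr-H}). The only genuinely nontrivial point in the whole argument is justifying the contour shift from $(C)$ to $\R$ inside each summand, but this is immediate: after extracting the Dirichlet coefficient the integrand is simply an entire exponential times the Schwarz function $f$, so the rectangles used to deform the contour have vanishing contribution on their vertical sides and encounter no poles. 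Everything else is bookkeeping.
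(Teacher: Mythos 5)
Your proposal is correct and follows essentially the same route as the paper: expand each integrand into a Dirichlet-type or geometric series on $(C)$, integrate termwise, and shift the resulting horizontal contours to $\R$ (justified by Paley--Wiener and the rapid decay of $f$ on horizontal lines) to recognize each term as a value of $\widehat{f}$. One small imprecision: on $(C)$ one has $\lvert p^{-(4\pi i\tau/\log K+2)}\rvert = p^{-(4c+2)}$, not $p^{-2}$, but since $c>0$ this is still $<1$ and the geometric expansion remains valid.
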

\begin{proof}
The proofs of the equalities (\ref{rewr-zeta}) and (\ref{rewr-A'}) follow immediately from the corresponding proofs in \cite[Lemma 4.2]{waxman}, once we substitute our character $\chi$ for the character $\chi_1$ in that paper. \par 
As for the equality (\ref{rewr-L}), we note the standard formula
\begin{align*}
\frac{L'(s,\chi)}{L(s, \chi)} = -\sum_{n \geqslant 1} \frac{\Lambda (n) \chi(n)}{n^s}, \quad \quad \re (s) > 1.
\end{align*}
Substituting this infinite sum for the logarithmic derivative appearing in the definition of $S_L$, we obtain the claim by changing variables and moving the contour of integration to the real line, which is justified due to the rapid decay of $f$ in combination with the lack of poles of the integrand in the part of the complex plane enclosed by these contours. \par 
Turning to $S_d$, we note that when $\tau$ has imaginary part $-c \log K / \pi$ where $c > 0$, the number $-2 \pi i \tau / \log K - 1$ has real part $-2c - 1 < 0$, and so we can write
\begin{align*}
\frac{d^{\pi i \tau / \log K + 1/2}}{d^{2 \pi i \tau / \log K + 1} - 1} &= d^{-\pi i \tau / \log K - 1/2} \sum_{n \geqslant 0} \lp d^{-2 \pi i \tau / \log K - 1} \rp^{n}.
\end{align*}
Inserting this into the expression (\ref{sd}), we get that
\begin{align*}
S_d &= -\frac{ \log d}{\sqrt{d} \log K} \int_{(C)} \exp \lp - \pi i \tau \frac{\log d}{\log K} \rp \sum_{n \geqslant 0} \exp \lp -n \log d \lp \frac{2 \pi i \tau}{\log K} + 1 \rp \rp f(\tau) \hsp \dd \tau \\
&= - \frac{\log d}{\sqrt{d} \log K} \sum_{n \geqslant 0} d^{-n} \int_{(C)} f(\tau) \exp \lp - 2 \pi i \tau (1/2 + n ) \frac{\log d}{\log K} \rp \hsp \dd \tau,
\end{align*}
which equals the claimed expression for $S_d$ as we can move the contour $(C)$ to the real line for the usual reasons.\par 
Finally, we turn our attention to $S_H$.  In the same way as before, we rewrite the function
\begin{align*}
H_2'(r) = \begin{cases}
0 &\text{if } d = 2, 7, \\
-2 \log 2 \lp 2^{2(2r+1)} - 1 \rp^{-1} &\text{otherwise},
\end{cases}
\end{align*}
as a geometric series. Letting $r = \pi i \tau / \log K$, we then find that
\begin{align*}
H_2' \lp \frac{\pi i \tau}{\log K} \rp = \frac{a \log 2}{2} \sum_{n \geqslant 0} 4^{-n}  \exp \lp - 2 \pi i \tau \frac{(2n+2) \log 2}{\log K} \rp,
\end{align*}
where $a = - \mathbbm{1}(d \neq 2,7)$. The equality (\ref{rewr-H}) now follows once we substitute this expression for $H_2'$ in (\ref{sH}) and shift the contour of integration.
\end{proof}  
It now follows from (\ref{rewr-zeta}) and (\ref{rewr-L}) that
\begin{align*}
S_\zeta + S_L = \frac{1}{\log K} \sum_{n \geqslant 1} \frac{\Lambda (n) ( \chi(n) - 1 )}{n} \widehat{f} \lp \frac{\log n}{\log K} \rp.
\end{align*}
Since those $d$ we are interested in satisfy $d \not \equiv 1$ (mod $4$), we know that $\chi(n)$ is a quadratic Dirichlet character of modulus $4d = d^3$ (in case $d = 2$) or modulus $d$ (otherwise). This fact means that for all prime powers $n$ appearing in the above sum, $\chi(n) = 0$ if and only if $n$ is a power of $d$. Moreover, since $\chi(n) - 1 = 0$ whenever $n$ is the power of a split prime or an even power of an inert prime, the computation above shows that
\begin{align}\label{unification01}
S_\zeta + S_L &= - \frac{\log d}{\log K} \sum_{n \geqslant 1} d^{-n} \widehat{f} \lp \frac{n \log d}{\log K} \rp - \frac{2}{\log K} \sum_{n \geqslant 0} \hsp \sum_{\text{$p$ inert}} \frac{\log p}{p^{2n+1}} \widehat{f} \lp \frac{(2n+1) \log p}{\log K} \rp.
\end{align}
Regarding the first infinite sum, we note with the help of Lemma \ref{lemma4.2equiv} that
\begin{align}\label{unification02}
\begin{split}
- \frac{\log d}{\log K} \sum_{n \geqslant 1} d^{-n} \widehat{f} \lp \frac{n \log d}{\log K} \rp &= S_\mathrm{ram} + \frac{\log d}{\log K} \sum_{n \geqslant 0} d^{-n-1/2} \widehat{f} \lp \frac{(2n+1) \log d}{2 \log K} \rp \\
&= S_\mathrm{ram} - S_d.
\end{split}
\end{align}
Similarly, in the case of the second infinite sum, we see that
\begin{align}\label{unification03}
\begin{split}
&\hspace{-1em}-\frac{2}{\log K} \sum_{n \geqslant 0} \hsp \sum_{\text{$p$ inert}} \frac{\log p}{p^{2n+1}} \widehat{f} \lp \frac{(2n+1) \log p}{\log K} \rp \\
&\quad = S_\mathrm{inert} + \frac{2}{\log K} \sum_{\text{$p$ inert}} \hsp  \sum_{n \geqslant 1} \frac{\log p}{p^{2n}} \widehat{f} \lp \frac{2n \log p}{\log K} \rp \\
&\quad = S_\mathrm{inert} + \frac{2}{\log K} \sum_{p \geqslant 3 \atop \text{$p$ inert}} \sum_{n \geqslant 1} \frac{\log p}{p^{2n}} \widehat{f} \lp \frac{2n \log p}{\log K} \rp \\
&\quad \quad + \mathbbm{1} \lp \text{$2$ inert} \, \rp \frac{2 \log 2}{\log K} \, \sum_{n \geqslant 1} \, 2^{-2n} \widehat{f} \lp \frac{2n \log 2}{\log K} \rp \\[+0.5em]
&\quad = S_\mathrm{inert} - S_{A'} + \mathbbm{1} \lp 2 \text{ inert} \rp \frac{2 \log 2}{\log K} \sum_{n \geqslant 1} 2^{-2n} \widehat{f} \lp \frac{2n \log 2}{\log K} \rp,
\end{split}
\end{align}
again using Lemma \ref{lemma4.2equiv}. Since $2$ is inert in $\K$ if and only if $d \neq 2, \, 7$, we see that when $d \neq 2, \, 7$, the last term above is
\begin{align*}
\frac{2 \log 2}{\log K} \sum_{n \geqslant 1} 2^{-2n} \widehat{f} \lp \frac{2n \log 2}{\log K} \rp 
&= \frac{4}{2} \frac{\log 2}{\log K} \sum_{n \geqslant 0} 2^{-2n-2} \widehat{f} \lp \frac{(2n+2) \log 2}{\log K} \rp \\
&= \frac{1}{2} \frac{\log 2}{\log K} \sum_{n \geqslant 0} 2^{-2n} \widehat{f} \lp \frac{(2n+2) \log 2}{\log K} \rp \\
&= -S_H.
\end{align*}
Since $S_H$ furthermore vanishes if $d = 2, \, 7$, combining (\ref{unification01}), (\ref{unification02}), and (\ref{unification03}), we therefore obtain that
\begin{align}\label{unification04}
S_\mathrm{inert} + S_\mathrm{ram} = S_\zeta + S_L + S_{A'} + S_d + S_H .
\end{align}
It now follows from (\ref{unconditionaloneleveldensity}) and (\ref{oneleveldensity}) that our unconditional expression for the $1$-level density $D \lp \mathcal{F}(K); f \rp$ agrees with the expression conditional on the Ratios Conjecture, and hence with the Katz--Sarnak prediction (\ref{katzsarnak}), if
\begin{align*}
S_\mathrm{split} \approx S_J.
\end{align*}
\section{Comparison with the Katz--Sarnak Density Conjecture}
The goal of this section is to unify our explicit computation with the prediction of the Katz--Sarnak Density Conjecture. As we described earlier, this goal amounts to verifying that the term $S_\mathrm{split}$ coming from the split rational primes is equal to the term $S_J$ predicted by the Ratios Conjecture, at least up to some small error. \par 
We now generalize the result \cite[Lemma 2.1]{rudwax} which provides a useful relation between the angle $\theta_I$ and the norm $\norm (I)$ of a non-zero ideal $I \subset \ringint_\K$. 
\begin{theorem}\label{anglesandlengthsyeeeah}
Let $\Lambda = g \mathbb{Z}^2 \subset \R^2$ be a unimodular lattice with
\begin{align*}
g = \left( \begin{matrix}
a & 0 \\
0 & a^{-1}
\end{matrix} \right)
 \left( \begin{matrix}
1 & 0 \\
n & 1
\end{matrix} \right) \in \mathrm{SL}_2 (\mathbb{R}).
\end{align*}
Let $\ell$ be a line through the origin such that the angle between the positive $x$-axis and $\ell$ is $\theta \in \lb 0, \pi \rp$. If $\theta = \pi / 2$ (in which case we let $q := 1$), or if $-n + a^2 \tan \theta$ is an algebraic number of degree $q \geqslant 1$, then there exists $C = C(\Lambda, \ell) > 0$ such that for every $\textbf{v} \in \Lambda \setminus \ell$,
\begin{align*}
\left| \alpha (\textbf{v}) \right| \geqslant \frac{C}{\Vert \textbf{v} \Vert^{q}},
\end{align*}
where $\alpha (\textbf{v})$ denotes the angle between $\textbf{v}$ and $\ell$.
\end{theorem}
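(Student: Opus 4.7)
The strategy is to reduce the desired lower bound on $\alpha(\mathbf{v})$ to a Diophantine approximation problem for $\beta := -n + a^2 \tan\theta$, and then invoke Liouville's theorem on rational approximations to algebraic numbers. First, I would parametrize an arbitrary $\mathbf{v} \in \Lambda$ as $\mathbf{v} = g(x,y)^{\intercal} = (ax, \, a^{-1}(nx+y))^{\intercal}$ with $(x,y) \in \Z^2$. Since the unit normal to $\ell$ is $(-\sin\theta, \cos\theta)$, the signed distance from $\mathbf{v}$ to $\ell$ equals
\begin{align*}
d(\mathbf{v}) = -ax\sin\theta + a^{-1}(nx+y)\cos\theta.
\end{align*}
The angle $\alpha(\mathbf{v})$ lies in $[0, \pi/2]$ and satisfies $|\sin \alpha(\mathbf{v})| = |d(\mathbf{v})|/\|\mathbf{v}\|$, so the elementary inequality $|\alpha(\mathbf{v})| \geqslant |\sin \alpha(\mathbf{v})|$ shows that it suffices to bound $|d(\mathbf{v})|$ from below.

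Next, I would split into cases. If $\theta = \pi/2$, the distance formula collapses to $|d(\mathbf{v})| = a|x|$, and the hypothesis $\mathbf{v} \notin \ell$ forces $x \neq 0$, so $|d(\mathbf{v})| \geqslant a$, yielding the claim with $q = 1$. If $\theta \neq \pi/2$, I would factor out $a^{-1}\cos\theta$ and write $d(\mathbf{v}) = a^{-1}\cos\theta \cdot (y - \beta x)$. The subcase $x = 0$ gives $|\sin \alpha(\mathbf{v})| = |\cos\theta|$, a positive constant; combined with the lower bound $\|\mathbf{v}\| \geqslant a^{-1}$ for such lattice vectors, this subcase is handled by taking $C$ small enough.

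The main subcase is $\theta \neq \pi/2$ together with $x \neq 0$. Here I would apply Liouville's theorem: since $\beta$ is algebraic of degree $q$, there is a constant $c(\beta) > 0$ with
\begin{align*}
|\beta - y/x| \geqslant c(\beta)/|x|^q
\end{align*}
for all $(x,y) \in \Z^2$ with $x \neq 0$ and $y/x \neq \beta$. (When $q = 1$, this reduces to the trivial bound $|yr - px| \geqslant 1$ coming from a representation $\beta = p/r$ in lowest terms.) Multiplying by $|x|$ yields $|y - \beta x| \geqslant c(\beta)/|x|^{q-1}$, and hence $|d(\mathbf{v})| \geqslant c'/|x|^{q-1}$ for a positive constant $c' = c'(\Lambda, \ell)$. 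Because $|x| = |v_1|/a \leqslant \|\mathbf{v}\|/a$, we obtain $|d(\mathbf{v})| \geqslant C'/\|\mathbf{v}\|^{q-1}$, and dividing by $\|\mathbf{v}\|$ gives the claimed bound $|\alpha(\mathbf{v})| \geqslant C/\|\mathbf{v}\|^{q}$.

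The principal technical input is Liouville's inequality, and I do not anticipate any serious obstacle beyond routine bookkeeping to ensure that $C$ may be chosen uniformly across the case distinctions, in particular so as to absorb the finitely many short lattice vectors and the degenerate subcases in which $|\alpha(\mathbf{v})|$ is already bounded below by a positive constant independent of $\mathbf{v}$.
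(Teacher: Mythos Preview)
Your proposal is correct and follows essentially the same route as the paper: both reduce the angular bound to a lower bound on the linear form $y-\beta x$ (equivalently $t-s(a^{2}\tan\theta-n)$ in the paper's notation) and then invoke Liouville's theorem. The only cosmetic difference is that you work with $|\sin\alpha(\mathbf{v})|=|d(\mathbf{v})|/\Vert\mathbf{v}\Vert$ directly, whereas the paper first rotates $\ell$ to the $x$-axis and uses $|\tan\alpha(\mathbf{v})|$; your formulation is arguably slightly cleaner since the inequality $|\alpha|\geqslant|\sin\alpha|$ holds for all $\alpha\in[0,\pi/2]$ without the small-angle caveat, but the mathematical content is identical.
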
 
\begin{proof}
Let us first note that if $\ell$ is the $y$-axis, then the claim follows easily: Indeed, the set of first coordinates of lattice points in $\Lambda$ is discrete, so for any $\textbf{v} \in \Lambda$, if the angle $\alpha(\textbf{v})$ between $\textbf{v}$ and the $y$-axis is non-zero, but small, we have
\begin{align*}
2 |\alpha (\textbf{v}) | \geqslant | \tan \alpha (\textbf{v}) | = \frac{|as|}{\left| (sn+t)/a \right|} \gg_a \frac{1}{\left| (sn+t)/a \right|} \geqslant \frac{1}{\sqrt{(sn+t)^2/a^2 + a^2 s^2}} = \frac{1}{\Vert \textbf{v} \Vert}.
\end{align*}
We can therefore assume that $\theta \neq \pi / 2$, so that $\cos \theta \neq 0$. \par Similarly,  if $\ell$ is the $x$-axis, Liouville's Theorem \cite[Thm. 1.1]{baker} and the assumption that $-n$ is algebraic of degree $q$ imply that $|sn+t| \gg |s|^{1-q}$ uniformly in $(s,t) \in \Z^2 \setminus \lbr {\bf 0} \rbr$, and hence
\begin{align*}
2 | \alpha (\textbf{v}) | &\geqslant | \tan \alpha (\textbf{v}) | = \frac{|sn+t|}{a^2 |s|} \gg_a \frac{1}{| a^q s^q|} \geqslant \frac{1}{\sqrt{a^{2q} s^{2q} + (sn+t)^{2q}/a^{2q}}} = \frac{1}{\Vert \textbf{v} \Vert^q},
\end{align*}
if $\alpha (\textbf{v})$ is non-zero and small. (Note that, in particular, this implies that $s \neq 0$ and $sn +t \neq 0$.) We can therefore also assume that $\theta \neq 0$, so that $\sin \theta \neq 0$. \par 
Let us now rotate $\ell$ and $\Lambda$ clockwise by the angle $\theta$, which transforms $\ell$ into the $x$-axis and $\Lambda$ into the lattice
\begin{align*}
\Lambda' = \lbr \lp 
\begin{matrix}
x \cdot \cos \theta + y \cdot \sin \theta \\
-x \cdot \sin \theta + y \cdot \cos \theta
\end{matrix} \rp : x = as, \, y = (sn + t)/a, \, s,t \in \Z \rbr.
\end{align*}
Since the substance of the claim pertains to the situation where $\alpha (\textbf{v}) \neq 0$ is very small, we assume that $\textbf{v} \in \Lambda'$ is any non-zero lattice point with $\alpha (\textbf{v})$ non-zero, but small. Under this assumption, we have the estimate
\begin{align}\label{pruttepuden}
\begin{split}
2 \left| \alpha (\textbf{v}) \right| \geqslant \left| \tan \alpha ( \textbf{v} ) \right| 
= \frac{| -x \cdot \sin \theta + y \cdot \cos \theta |}{| x \cdot \cos \theta + y \cdot \sin \theta |} 
= \frac{|y - x \cdot \tan \theta |}{|x + y \cdot \tan \theta|} 
\gg_a \frac{|t -  s \cdot \lp a^2 \tan \theta - n \rp|}{|x + y \cdot \tan \theta|}.
\end{split}
\end{align}
If $s = 0$, we have $x = 0$ and $y = t/a \neq 0$, and and the right-hand side of \eqref{pruttepuden} is bounded from below (independently of $\textbf{v}$) by $|a / \tan \theta | > 0$. We can therefore suppose that $s \neq 0$. In this case, the assumption about $a^2 \tan \theta - n$ implies, by Liouville's Theorem, that there exists $C = C(\Lambda, \ell) > 0$ such that
\begin{align}\label{bakerstuff}
\left| t - s \cdot (a^2 \tan \theta - n) \right| \geqslant \frac{C}{|s|^{q-1}}.
\end{align}
Since we have $|s|^{q-1} \ll_a \lp a^2 s^2 + (sn+t)^2/a^2 \rp^{(q-1)/2} = \Vert \textbf{v} \Vert^{q-1}$ and
\begin{align*}
|x + y \cdot \tan \theta | \leqslant \sqrt{ |x + y \cdot \tan \theta |^2 + |y - x \cdot \tan \theta |^2} = \frac{\Vert \textbf{v} \Vert}{|\cos \theta |},
\end{align*}
we obtain the claim from (\ref{pruttepuden}). 
\end{proof}
\textsc{Remark.} In anticipation of the lemma below, we use Theorem \ref{anglesandlengthsyeeeah} to define
\begin{align*}
Q := \max_{0 \leqslant m \leqslant 2N-1} \Big\{ q \geqslant 1 : {\small \begin{array}{l}
 -n(\ringint_\K) + a(\ringint_\K)^2 \tan \lp \pi m / (2N) \rp\\
\text{ is algebraic of degree $q$}
\end{array}} \Big\},
\end{align*}
where $n( \ringint_\K)$ and $a ( \ringint_\K)$ denote the parameters appearing in the Iwasawa decomposition of the lattice $\ringint_\K$ from Lemma \ref{iwasawaforringint}. That is, $Q$ is the largest of the degrees of all the algebraic numbers $-n(\ringint_\K) + a(\ringint_\K)^2 \tan \lp \pi m / (2N) \rp$, where $m = 0, \ldots, 2N-1$. Note that these numbers are indeed algebraic: This follows from Lemma \ref{iwasawaforringint} and from the fact that $\tan (\pi m / 2N)$ is algebraic, as $\tan (\pi m) = 0$ can be written as a quotient of two polynomials in $\tan (\pi m / 2N)$ with integer coefficients. \par 
In particular, there exists a constant $0 < c_0 < 1/4N$, which only depends on $\ringint_\K$ and $N$, such that for $m = 0, \ldots, 2N-1$, we have $| \alpha (\textbf{v}) | \geqslant c_0/ \Vert \textbf{v} \Vert^Q$, where $\alpha (\textbf{v})$ denotes the angle between $\ell_m$ and $\textbf{v} \in \ringint_\K \setminus \ell_m$. \\ \par 
We can now repeat the argument of Waxman in \cite[Section 6]{waxman} to prove that, at least when $\alpha := \sup \mathrm{supp} \, \widehat{f} < 1$, the unconditional asymptotic for the one-level density obtained above is in agreement with the prediction of the Katz--Sarnak Density Conjecture. Observe that when $\alpha < 1$, this is the case precisely if $S_\mathrm{split}$ is very small, cf. Lemma \ref{SJ-est}. \par 
\begin{lemma}\label{ssplitissmall} Suppose that $\alpha < 1$. Then $S_\mathrm{split} \ll_{\widehat{f}, \varepsilon} K^{\alpha - 1 + \varepsilon}$.
\end{lemma}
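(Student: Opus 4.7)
The plan is to follow the argument of \cite[Section 6]{waxman}, with our Theorem \ref{anglesandlengthsyeeeah} replacing the $d=1$ case of \cite[Lemma 2.1]{rudwax}. Since $\mathrm{supp}\,\widehat{f} \subset (-\alpha, \alpha)$, the factor $\widehat{f}(n\log p / (2\log K))$ vanishes unless $p^n < K^{2\alpha}$, so the sum defining $S_\mathrm{split}$ truncates to this range. For each split prime $p$ with $\langle p\rangle = \mathfrak{q}\overline{\mathfrak{q}}$ and $\mathfrak{q} = \langle q\rangle$, the inner sum over $k$ is the geometric series
\[
\sum_{k=1}^K \psi_k(\mathfrak{q}^n) = \sum_{k=1}^K e^{2iNkn\theta_q},
\]
which I would bound by $\min\!\bigl(K,\,1/|\sin(Nn\theta_q)|\bigr)$, together with the analogous bound for $\overline{\mathfrak{q}}^n$.

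The crux is a Diophantine lower bound on $|\sin(Nn\theta_q)|$. This quantity vanishes precisely when $q^n$ lies on one of the lines $\ell_m$ through the origin at angle $\pi m/(2N)$, $m = 0,\ldots,2N-1$, introduced in the remark after Theorem \ref{anglesandlengthsyeeeah}. That theorem and remark show that off these finitely many lines one has $|\sin(Nn\theta_q)| \gg_N p^{-nQ/2}$, so that
\[
\left|\sum_{k=1}^K \psi_k(\mathfrak{q}^n) + \sum_{k=1}^K \psi_k(\overline{\mathfrak{q}}^n)\right| \ll_N \min\!\bigl(K,\, p^{nQ/2}\bigr).
\]

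Plugging this back into $S_\mathrm{split}$, using $|\widehat{f}| \ll_{\widehat{f}} 1$, and applying partial summation together with the Prime Number Theorem in the form $\psi(X)\sim X$, I would split the resulting double sum over $(p,n)$ according to whether $p^{nQ/2}$ is above or below $K$, and verify that both pieces are absorbed into the asserted bound $K^{\alpha-1+\varepsilon}$. The main obstacle is the input to the second step: one needs the finitely many real numbers $-n(\ringint_\K) + a(\ringint_\K)^2\tan(\pi m/(2N))$ to all be algebraic, so that the Liouville-type inequality built into Theorem \ref{anglesandlengthsyeeeah} produces a single exponent $Q$ that is uniform in $m$. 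This is exactly what the remark after Theorem \ref{anglesandlengthsyeeeah} secures, via the algebraicity of $\sqrt{d}$ and of $\tan(\pi m/(2N))$, and makes the overall estimate go through.
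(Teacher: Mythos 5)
Your setup is correct up through the geometric-series bound and the correct application of Theorem~\ref{anglesandlengthsyeeeah} to the lattice point $q^n$ (which does give $|\sin(Nn\theta_q)| \gg p^{-nQ/2}$). But the final step cannot work: bounding each character sum pointwise by $\min(K, p^{nQ/2})$ and then summing over prime powers is strictly too weak to yield $K^{\alpha-1+\varepsilon}$ on the full range $\alpha<1$. To see this concretely, take $Q=1$: the bound $\min(K, p^{n/2})=p^{n/2}$ holds for all $p^n < K^{2\alpha}$ (since $\alpha<1$), and so
\begin{align*}
\frac{1}{K\log K}\sum_{p^n<K^{2\alpha}}\frac{\log p}{p^{n/2}}\,p^{n/2}
=\frac{1}{K\log K}\sum_{p^n<K^{2\alpha}}\log p
\asymp\frac{K^{2\alpha-1}}{\log K},
\end{align*}
which is $\gg 1$ whenever $\alpha>1/2$. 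For $Q\geqslant 2$ the situation is worse: the prime powers with $K^{2/Q}<p^n<K^{2\alpha}$ contribute, after you replace $\min(K,p^{nQ/2})$ by $K$, an amount $\asymp K^\alpha/\log K$, again not $o(1)$. The root cause is that the Liouville inequality is a worst-case bound; summing worst-case bounds over every prime power loses the fact that most prime powers do not lie anywhere near the lines $\ell_m$.

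The paper's proof handles exactly this averaging issue by a geometric device you have not reproduced. Rather than bounding each term individually, it associates to each ideal $I$ a small disjoint annular sector $S_I$ of unit area around the corresponding lattice point, uses $\Lambda(N(I))/\sqrt{N(I)}\ll\int_{S_I}\log r\,\dd r\,\dd\theta$, and then dominates the full sum by a single integral over the good region $\lbr\theta : c_0/r^Q\leqslant\theta\leqslant\pi/(2N)-c_0/r^Q\rbr$. Because the lattice points cannot cluster near the lines, the angular integral $\int_{c_0/r^Q}^{\pi/(2N)}\theta^{-1}\,\dd\theta$ is only $\ll Q\log r$ — logarithmic, not polynomial, in $r$ — and this is precisely what yields $\frac{1}{K\log K}\int_1^{K^\alpha}(\log r)^2\,\dd r\ll K^{\alpha-1}\log K$. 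So the Diophantine input enters through the \emph{lower limit} of an angular integral, not through a pointwise bound on the character sums; that is the missing idea.
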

\begin{proof}
Note that the character sum appearing in the definition of $S_\mathrm{split}$
satisfies
\begin{align}\label{prelcharsumest}
\left| \sum_{k = 1}^K \psi_k (I) \right| = \left| \sum_{k = 1}^K e^{2iNk \theta_I} \right| \leqslant \frac{2}{\left| e^{2iN\theta_I} - 1 \right|}
\end{align}
whenever $\theta_I$ is not a multiple of $\pi / N$. Also, note that $\theta_I$ can't even be a multiple of $\pi / 2N$, for in that case, if $I = \langle \beta \rangle$, we have $( \beta / \overline{\beta} )^{N} = \pm 1$, and hence 
\begin{align*}
\langle \beta \rangle^N = \langle \beta^N \rangle = \langle \overline{\beta}^N \rangle = \langle \overline{\beta} \rangle^N,
\end{align*}
which forces $ \beta \equiv \overline{\beta}$ (mod $\ringint_\K^\times$) due to the unique factorization of ideals in $\ringint_\K$. It therefore follows that $\langle \beta \rangle$ can't lie over a split prime, which is a contradiction. Now, if 
\begin{align}\label{cuttingupthehalfcircle}
\frac{n \pi}{2N} < \theta_I < \frac{(n+1) \pi}{2N} 
\end{align}
for some $n = 0, \ldots, 2N-1$, we have
\begin{align*}
\frac{2}{\left| e^{2iN \theta_I} - 1 \right|} \ll \frac{1}{(n+1) \pi / (2N) - \theta_I} + \frac{1}{\theta_I - n \pi / (2N)}. 
\end{align*}
Since we can always take $\theta_I \in \lp 0, \pi \rp$ because $\pm 1 \in \ringint_\K^\times$ for all possible values of $d$, we can define $n \lp \theta_I \rp$ to be the unique $n \in \lbr 0, \ldots, 2N-1 \rbr$ such that (\ref{cuttingupthehalfcircle}) is satisfied. In combination with (\ref{prelcharsumest}), we therefore obtain
\begin{align}\label{charsumestimate}
\left| \sum_{k = 1}^K \psi_k (I) \right| \ll \frac{1}{(n(\theta_I)+1) \pi / (2N) - \theta_I} + \frac{1}{\theta_I - n(\theta_I) \pi / (2N)}.
\end{align}
\par Now, from the definition of $S_\mathrm{split}$ and (\ref{charsumestimate}) it is clear that 
\begin{align}\label{splitidealsum}
\begin{split}
S_\mathrm{split} &\ll \frac{1}{K \log K} \sum_{n = 0}^{2N-1} \sum_{I \subset \ringint_\K \atop n(\theta_I)=n} \hsp \frac{\Lambda \lp N(I) \rp}{\sqrt{N(I)}} \cdot \left| \widehat{f} \lp \frac{\log \sqrt{N(I)}}{\log K} \rp \right| \\
&\quad \times \lp \frac{1}{(n+1) \pi / (2N) - \theta_I} + \frac{1}{\theta_I - n \pi / (2N)} \rp
\end{split}
\end{align}
since all the powers $p^m$ ($m \geqslant 1$) of a split prime will appear as norms of suitable ideals $I \subset \ringint_\K$ with $\theta_I \neq 0$. \par 
Since $\ringint_\K$ is a two-dimensional lattice in $\C$, the basic idea is that the right-hand side of (\ref{splitidealsum}) can be estimated rather sharply by replacing it with an integral over certain parts of the ambient complex plane, where a change to polar coordinates will simplify the integrand greatly. To specify these domains of integration, for each $I \subset \ringint_\K$ we let
\begin{align*}
S_I \subset \lbr re^{i \theta} \in \C : n(\theta_I) \pi / 2N + c_0 < \theta < (n(\theta_I)+ 1) \pi / 2N - c_0 \rbr
\end{align*}
be a small annulus sector with distance $\gg 1$ to the origin and containing the generator $\sqrt{\norm (I)} e^{i \theta_I}$ of the ideal $I$ in the direction given by $\theta_I$. Furthermore, we choose these sectors to have the same area and so that $S_I \cap S_J = \varnothing$ if $I \neq J$.  Now, we find that
\begin{align*}
&\frac{1}{K \log K} \sum_{n = 0}^{2N-1} \sum_{I \subset \ringint_\K \atop n(\theta_I ) = n} \hsp \frac{\Lambda \lp N(I) \rp}{\sqrt{N(I)}} \left| \widehat{f} \lp \frac{\log \sqrt{N(I)}}{\log K} \rp \right| \lp \frac{1}{(n+1) \pi / (2N) - \theta_I} + \frac{1}{\theta_I - n \pi / (2N)} \rp \\
&\quad = \frac{1}{K \log K} \sum_{n = 0}^{2N-1} \sum_{I \subset \ringint_\K \atop n(\theta_I ) = n} \int_{S_I} \log r \left| \widehat{f} \lp \frac{\log r}{\log K} \rp \right| \lp \frac{1}{(n+1) \pi / (2N) - \theta_I} + \frac{1}{\theta_I - n \pi / (2N)} \rp \hsp \dd r \, \dd \theta \\[+0.3em]
&\quad \ll \frac{1}{K \log K} \sum_{n = 0}^{2N-1} \int_{r = 1}^\infty \log r \left| \widehat{f} \lp \frac{\log r}{\log K} \rp \right| \int_{n \pi / 2N+c_0/r^Q}^{(n+1) \pi / 2N - c_0/r^Q} \frac{1}{(n+1) \pi / 2N - \theta} \hsp \dd \theta \, \dd r \\[+1em]
&\quad \ll \frac{1}{K \log K} \int_{r = 1}^\infty \log r \left| \widehat{f} \lp \frac{\log r}{\log K} \rp \right| \int_{c_0/r^Q}^{\pi / 2N-c_0/r^Q} \frac{1}{\theta} \hsp \dd \theta \, \dd r,
\end{align*}
where we used Lemma \ref{anglesandlengthsyeeeah} and the remark following it to bound the angular parameter $\theta$ in terms of $r$. Continuing, we obtain from (\ref{splitidealsum}) and this estimate that
\begin{align*}
S_\mathrm{split} \ll_Q \frac{1}{K \log K} \int_{r = 1}^\infty \big( \log r \big)^2 \left| \widehat{f} \lp \frac{\log r}{\log K} \rp \right| \, \dd r \ll_{\widehat{f}} \frac{1}{K \log K} \int_{r = 0}^{K^\alpha} \big( \log r \big)^2 \, \dd r \ll K^{\alpha - 1} \log K.
\end{align*}
This concludes the proof.
\end{proof}
By the remark after (\ref{unification04}), we have therefore proved Theorem \ref{THMunconditional}.\\ \par 
It would be satisfactory to give a reason why $S_\mathrm{split}$ and $S_J$ would have anything to do with each other, so that the approximate equality given by Lemma \ref{ssplitissmall} is not just "accidental." It seems difficult to give any such reason, considering the particularly involved arithmetic quality of the term $S_J$, and we have not succeeded in this matter. However, we can make the following observation, which may or may not be relevant in order to make further progress: The character sum appearing in $S_\mathrm{split}$ is expressible in terms of the \textit{Dirichlet kernel} 
\begin{align*}
D_K (x) = \frac{\sin (Kx + x/2)}{\sin x/2} = 1 + 2 \, \sum_{k = 1}^K \cos (kx). 
\end{align*}
More precisely, we have the following result.
\begin{prop}\label{dirichletkernel}
Suppose $\mathfrak{q}_1$ and $\mathfrak{q}_2$ are different prime ideals in $\ringint_\K$ lying over a rational prime $p$, and let $n \geqslant 1$. Then we have
\begin{align*}
\sum_{k = 1}^K \Big( \psi_k(\mathfrak{q}_1^n) + \psi_k(\mathfrak{q}_2^n) \Big) = -1 + D_K \big( 2Nn \theta_{\mathfrak{q}_1} \big),
\end{align*}
where $\theta_{\mathfrak{q}_1} \in \lp 0, \pi \rp$ denotes the argument of a generator of $\mathfrak{q}_1$ in the upper half-plane.
\end{prop}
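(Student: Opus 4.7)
The plan is to compute the sum directly by exploiting the explicit description of $\psi_k$ in terms of angles (as recorded in Section 2.1) together with the fact that $\mathfrak{q}_1$ and $\mathfrak{q}_2$ must be complex conjugates of each other. First I would observe that since $p$ splits, Lemma \ref{splittingbehaviour} together with the factorization $p = q\overline{q}u$ (for some generator $q$ and unit $u$) shows that the only prime ideals lying over $p$ are $\langle q \rangle$ and $\langle \overline{q} \rangle$, and these are distinct. Hence, after possibly swapping the labels, $\mathfrak{q}_1 = \langle \beta \rangle$ and $\mathfrak{q}_2 = \langle \overline{\beta} \rangle$ for some $\beta \in \ringint_\K$ chosen so that $\theta_\beta = \theta_{\mathfrak{q}_1} \in (0, \pi)$.

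Next I would apply the description $\psi_k(\langle \alpha \rangle) = e^{2iNk\theta_\alpha}$ from Section 2.1 and use complete multiplicativity of $\psi_k$ (and the fact that $\beta^n$ has argument $n\theta_{\mathfrak{q}_1}$, while $\overline{\beta}^n$ has argument $-n\theta_{\mathfrak{q}_1}$ modulo angles of units, which are absorbed into the $Nk$-th power since $N$ is a multiple of $|\ringint_\K^\times|$). This gives
\begin{align*}
\psi_k(\mathfrak{q}_1^n) = e^{2iNkn\theta_{\mathfrak{q}_1}}, \qquad \psi_k(\mathfrak{q}_2^n) = e^{-2iNkn\theta_{\mathfrak{q}_1}},
\end{align*}
and therefore
\begin{align*}
\psi_k(\mathfrak{q}_1^n) + \psi_k(\mathfrak{q}_2^n) = 2\cos\big(2Nkn\, \theta_{\mathfrak{q}_1}\big).
\end{align*}

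Finally, summing over $k = 1, \ldots, K$ and using the stated identity $D_K(x) = 1 + 2\sum_{k=1}^K \cos(kx)$ with $x = 2Nn\theta_{\mathfrak{q}_1}$, I obtain
\begin{align*}
\sum_{k=1}^K \Big( \psi_k(\mathfrak{q}_1^n) + \psi_k(\mathfrak{q}_2^n) \Big) = 2\sum_{k=1}^K \cos\big(k\cdot 2Nn\theta_{\mathfrak{q}_1}\big) = -1 + D_K\big(2Nn\theta_{\mathfrak{q}_1}\big),
\end{align*}
which is the claim. There is no real obstacle here: the proposition is essentially a repackaging of the fact that $\psi_k$ is a pure phase character on the conjugate pair $\langle \beta \rangle, \langle \overline{\beta} \rangle$, and the only point requiring a moment's care is the identification $\mathfrak{q}_2 = \overline{\mathfrak{q}_1}$, which follows from Lemma \ref{splittingbehaviour} since there are exactly two primes above a split $p$.
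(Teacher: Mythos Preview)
Your proof is correct and follows essentially the same approach as the paper: identify $\mathfrak{q}_2 = \overline{\mathfrak{q}_1}$, compute $\psi_k(\mathfrak{q}_1^n) + \psi_k(\mathfrak{q}_2^n) = 2\cos(2Nkn\,\theta_{\mathfrak{q}_1})$, and sum via the Dirichlet kernel identity. If anything, your argument is slightly more streamlined, since you invoke the formula $\psi_k(\langle\alpha\rangle) = e^{2iNk\theta_\alpha}$ from Section~2.1 directly, whereas the paper reaches the same cosine expression via a detour through the identity $\arctan(1/x) = -\tfrac{1}{2i}\log\tfrac{x-i}{x+i}$.
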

\begin{proof}
Suppose that $z = a + ib$ is any generator of $\mathfrak{q}_1$. Since $\bigcap_d \ringint_\K^\times = \Z / 2 \Z$, we can assume that $b > 0$. Moreover, since the generator of the ideal $\mathfrak{q}_1 \mathfrak{q}_2$ is a rational prime, the conjugate $\overline{z} = a - ib$ must be a generator of $\mathfrak{q}_2$. Therefore,
\begin{align*}
\psi_k(\mathfrak{q}_1^n) + \psi_k(\mathfrak{q}_2^n) &= \lp \frac{z}{\overline{z}} \rp^{Nnk} + \lp \frac{\overline{z}}{z} \rp^{Nnk} \\[+0.3em]
&= \exp \big( Nnk \log\big( z/\, \overline{z} \big) \big) + \exp \big(\! -\! \! Nnk \log\big( z/\, \overline{z} \big)\big) \\[+1em]
&= 2 \cos \big( Ni nk \log\big( z/\, \overline{z} \big)  \big) \\[+0.3em]
&= 2 \cos \lp Nink \log \frac{a/b + i}{a/b - i} \rp \\
&= 2 \cos \lp 2N n k \cdot \frac{1}{2i} \log \frac{a/b - i}{a/b + i} \rp \\[+0.5em]
&= 2 \cos \big( 2Nnk \cdot \arctan(b/a) \big), 
\end{align*}
where we used the identity
\begin{align*}
\arctan \frac{1}{x} = -\frac{1}{2i} \log \frac{x-i}{x+i},
\end{align*}
valid for all $x \neq 0$. Moreover, since the angle $\theta_{\mathfrak{q}_1}$ between $1$ and $a + ib$ is given by 
\begin{align*}
\theta_{\mathfrak{q}_1} = \begin{cases}
\arctan(b/a) &\text{if } a > 0, \\
\arctan(b/a) + \pi &\text{if } a < 0, 
\end{cases}
\end{align*}
the computation above and the periodicity of cosine show that
\begin{align*}
\psi_k(\mathfrak{q}_1^n) + \psi_k(\mathfrak{q}_2^n) = 2 \cos \big( 2Nnk\theta_{\mathfrak{q}_1} \big).
\end{align*}
It now follows that
\begin{align*}
\sum_{k = 1}^K \lp \psi_k(\mathfrak{q}_1^n) + \psi_k(\mathfrak{q}_2^n) \rp = 2 \sum_{k = 1}^K \cos \big( 2Nnk \theta_{\mathfrak{q}_1} \big) = -1 + D_K \big( 2Nn \theta_{\mathfrak{q}_1} \big),
\end{align*}
which completes the proof.
\end{proof}
Making use of this observation, we can then rewrite $S_\mathrm{split}$ as 
\begin{align*}
S_\mathrm{split} &= \frac{1}{K \log K} \sum_{(-d/p) = 1} \hsp \sum_{n \geqslant 1} \frac{\log p}{p^{n/2}} \widehat{f} \lp \frac{n \log p}{2 \log K} \rp \\ 
&\quad \quad - \frac{1}{\log K}  \sum_{(-d/p) = 1} \hsp \sum_{n \geqslant 1} \frac{\log p}{p^{n/2}} \widehat{f} \lp \frac{n \log p}{2 \log K} \rp \frac{D_K \big( 2Nn \theta_{p} \big)}{K},
\end{align*}
where $\theta_p \in (0, \pi )$ denotes the argument of one of the generators of either of the prime ideals lying over $p$. Thus, the primary benefit of Lemma \ref{dirichletkernel} is that it allows one to express the contribution of the split primes as a sum of two terms, among which the first one can be understood with partial summation, and the other one involves a well-studied function in a normalized form (due to the factor $1/K$). At any rate, the presence of the angles $\theta_p$ in the last contribution seems to be quite a difficult obstacle. A better understanding of these angles is likely a crucial step if one wishes to understand the relation $S_\mathrm{split} \approx S_J$ on a deeper level.

\end{document}